\newtheoremstyle{exampstyle}
{8pt} % Space above
{8pt} % Space below
{\it} % Body font
{} % Indent amount
{\bfseries} % Theorem head font
{.} % Punctuation after theorem head
{.5em} % Space after theorem head
{} % Theorem head spec (can be left empty, meaning `normal')
\theoremstyle{exampstyle}
\newtheorem{theorem}{Theorem}
\newtheorem{lemma}{Lemma}
\newtheorem{corollary}{Corollary}
\newtheorem{remark}{Remark}
\newtheorem{prop}{Proposition}
\newtheorem{defn}{Definition}
\numberwithin{equation}{section}
\numberwithin{example}{section}
\numberwithin{theorem}{section}
\numberwithin{lemma}{section}
\numberwithin{corollary}{section}
\numberwithin{prop}{section}
\numberwithin{defn}{section}
\numberwithin{remark}{section}
\newcommand{\eat}[1]{}
\DeclarePairedDelimiter{\abs}{|}{|}
\renewcommand{\hat}[1]{\widehat{#1}}
\renewcommand{\tilde}[1]{\widetilde{#1}}
\newcommand{\E}{\mathbbm{E}}
\renewcommand{\P}{\mathbbm{P}}
\newcommand{\ind}{\mathbbm{1}}
\newcommand{\Var}{\op{Var}}
\newcommand{\Cov}{\op{Cov}}
\newcommand{\R}{\mathbb{R}}
\newcommand{\F}{F_Y^{(n)}}
\newcommand{\mW}{\mathcal{W}}
\newcommand{\bpj}{\beta_{\pha}(\jtd^{(n)})}
\newcommand{\bpjr}{\beta_{\pha}(\jtdt)}
\newcommand{\bpjs}{\beta_{\pha}(\jtds)}
\newcommand{\bpjd}{\beta_{\pha}(\jtdd)}
\newcommand{\stp}{*\prime}
\newcommand{\jt}{F_{X,Y}}
\newcommand{\xj}{\xi(\jtd^{(n)})}
\newcommand{\mx}{F_X}
\newcommand{\my}{F_Y}
\newcommand{\jtd}{f_{X,Y}}
\newcommand{\mxd}{f_X}
\newcommand{\myd}{f_Y}
\newcommand{\cde}{g_{X,Y}}
\newcommand{\Dl}{\Delta}
\newcommand{\hloc}{\mathrm{H}_{1,n}^{\mathrm{loc}}(c_n;\Gamma)}
\newtheorem{Assumption}{Assumption}
\newcommand*{\rom}[1]{\expandafter\@slowromancap\romannumeral #1@}
\newcommand{\Hy}{\mathrm{H}}
\newcommand{\mD}{\mathfrak{D}}
\newcommand{\bX}{\mathbf{X}}
\newcommand{\pha}{\phi_{n}}
\newcommand{\jtdt}{f_{X,Y,r_n}}
\newcommand{\jtds}{f_{X,Y,\sigma_n}}
\newcommand{\jtdd}{f_{X,Y,\Delta_n}}
\newcommand\numberthis{\addtocounter{equation}{1}\tag{\theequation}}
\newcommand{\jtdc}{f_{Y|X}}
\providecommand{\abs}[1]{\bigg\lvert#1\bigg\rvert}
\renewcommand{\hat}{\widehat}
\def\var{\mbox{var}}
\def\Var{\mbox{Var}\,}
\def\Cov{\mbox{Cov}}
\def\disp{\displaystyle}
\newcommand{\bfm}[1]{\ensuremath{\mathbf{#1}}}
     \def\EE{\mathbb{E}}
     \def\PP{\mathbb{P}}
\def\bx{\bfm x}   \def\bX{\bfm X}  
\def\by{\bfm y}   \def\bY{\bfm Y}  
\def\E{\mathbb{E}}
\def\P{\mathbb{P}}
\newcommand{\ccom}{\mathpunct{\raisebox{0.2ex}{,}}}
\begin{document}
	
	\renewcommand{\abstractname}{}    % clear the title

	\begin{frontmatter}
		\title{Exact detection thresholds and minimax optimality of Chatterjee's correlation coefficient}
		
		\runtitle{On Chatterjee's correlation coefficient}

		\begin{aug}
	\author{\fnms{Arnab} \snm{Auddy,}\thanksref{t1,a1}
		\ead[label=e1]{aa4238@columbia.edu}}
	\author{\fnms{Nabarun} \snm{Deb,}\thanksref{t1,a2}\ead[label=e2]{nd2560@columbia.edu}}\and
	\author{\fnms{Sagnik} 
		\snm{Nandy}\thanksref{t1,a3} \ead[label=e3]{sagnik@wharton.upenn.edu}}
	% \address{1255 Amsterdam Ave. \\ New York, NY 10027 \\ \printead{e3}}
	\address{
		\thanksmark{a1}
		Department of Biostatistics, University of Pennsylvania, \\
		%\printead{e1}\\
		\thanksmark{a2}
		Booth School of Business, University of Chicago,\\
		%\printead{e2}\\
		\thanksmark{a3}
		Department of Statistics, University of Pennsylvania\\
		%\printead{e3}\\
	}
	
	\runauthor{Auddy, Deb, and Nandy}
	\thankstext{t1}{Alphabetical order. All authors have equal contribution.}% Authorship has been assigned alphabetically.}
%\address{\thanksmark{m1} University of Pennsylvania\\
%	423 Guardian Drive\\
%	Philadelphia, PA 19104\\
%	\printead{e1} \\}
%\address{\thanksmark{m2} 
%	The University of Chicago Booth School of Business
%	\\
%	5807 S Woodlawn Ave\\
%	 Chicago, IL 60637\\
%	\printead{e2} \\}
%\address{\thanksmark{m3} University of Pennsylvania \\
%	419 Wharton Academic Research Building \\
%	Philadelphia, PA 19104 \\
%	\printead{e3} \\			
%}
\end{aug}
		\vspace{0.2in}
		\begin{abstract} Recently, Chatterjee (2021) introduced a new rank-based correlation coefficient which can be used to measure the strength of dependence between two random variables. This coefficient has already attracted much attention as it converges to the Dette-Siburg-Stoimenov measure (see Dette et al. (2013)), which equals $0$ if and only if the variables are independent and $1$ if and only if one variable is a function of the other. Further, Chatterjee's coefficient is computable in (near) linear time, which makes it appropriate for large-scale applications. In this paper, we expand the theoretical understanding of Chatterjee's coefficient in two directions: (a) First we consider the problem of testing for independence using Chatterjee's correlation. We obtain its asymptotic distribution under any changing sequence of alternatives converging to the null hypothesis (of independence). We further obtain a general result that gives exact detection thresholds and limiting power for Chatterjee's test of independence under natural nonparametric alternatives converging to the null. As applications of this general result, we prove a $n^{-1/4}$ detection boundary for this test and compute explicitly the limiting local power on the detection boundary for popularly studied alternatives in the literature. (b) We then construct a test for non-trivial levels of dependence using Chatterjee's coefficient. In contrast to testing for independence, we prove that, in this case, Chatterjee's coefficient indeed yields a minimax optimal procedure with a $n^{-1/2}$ detection boundary. Our proof techniques rely on Stein's method of exchangeable pairs, a non-asymptotic projection result, and information theoretic lower bounds.
			
		\end{abstract}

		\begin{keyword}[class=MSC]
			\kwd[Primary ]{62G10, 62H20, 60F05}
			\kwd[; secondary ]{62E17}
			% \kwd{}
			% \kwd[; secondary ]{}
		\end{keyword}
		
		\begin{keyword}
			\kwd{H\'{a}jek asymptotic representation}
			\kwd{independence testing}
			\kwd{Kantorovic-Wasserstein distance}
			\kwd{local power}
			%\kwd{Chatterjee's rank correlation}
			%\kwd{measure of association}
			\kwd{Stein's method for locally dependent structures}
		\end{keyword}
		
	\end{frontmatter}

	% AOS,AOAS: If there are supplements please fill:
	%\begin{supplement}[id=suppA]
	%  \sname{Supplement A}
	%  \stitle{Title}
	%  \slink[doi]{10.1214/00-AOASXXXXSUPP}
	%  \sdatatype{.pdf}" 
	%  \sdescription{Some text}
	%\end{supplement}

	\section{Introduction}
	
	Suppose $(X_1,Y_1), \ldots , (X_n,Y_n)\overset{i.i.d.}{\sim}\jt(\cdot,\cdot)$ for some bivariate distribution function $\jt(\cdot,\cdot)$, with marginals $\mx(\cdot)$ and $\my(\cdot)$ for $X_1$ and $Y_1$, respectively. The problem of measuring and testing the \emph{extent of dependence} between $X_1$ and $Y_1$ has attracted much attention for over a century (see e.g., \cite{Blum1961,dette2013copula,kendall1938new,pearson1895notes,spearman1906footrule}). A fundamental question in this regard is the classical independence testing problem
	\begin{equation}\label{eq:baseprob}
		\Hy_0:\jt(x,y)=\mx(x)\my(y)\ \forall \ x,y\in\R \quad \mbox{versus} \quad \Hy_1: \textrm{not } \Hy_0. %\exists\ x,y\in \R\ \textrm{s.t.}\ \jt(x,y)\neq\mx(x)\my(y).
	\end{equation}
	Problem~\eqref{eq:baseprob} has been studied extensively in the statistics literature along with a variety of applications (see~\cite{bergsma2014consistent, Blomqvist1950,Blum1961,even2020independence,even2021counting,gamboa2018sensitivity,gini1914ammontare,heller2016computing,Hoeffding1948,kendall1938new,nandy2016large,spearman1906footrule,weihs2016efficient,csorgHo1985testing,Deb2023,Drton2020,Han2017,Rosenblatt1975,Wang2017,Weihs2018,yanagimoto1970measures,Hajek1999}). Note however that~\eqref{eq:baseprob} tests only whether $X$ and $Y$ are fully independent or otherwise, and does not give any indication as to how strong the dependence between $X$ and $Y$ is. Therefore to better understand the dependence, an alternative approach is to use a \emph{measure of dependence}, say $\rho(\jt)$, which can be chosen as classical measures such as Pearson's linear correlation coefficient~\cite{pearson1895notes}, Spearman's rank correlation coefficient~\cite{spearman1961proof} or more nonparametric measures such as~\cite{dette2013copula,gamboa2018sensitivity} to name a few, and consider the testing problem
	\begin{equation}\label{eq:genvalprob}
		\Hy_0:\rho(\jt)=\rho_0\  \quad \mbox{versus} \quad \Hy_1: \rho(\jt)>\rho_0, %\exists\ x,y\in \R\ \textrm{s.t.}\ \jt(x,y)\neq\mx(x)\my(y).
	\end{equation}
	for general $\rho_0$. Depending on the choice of $\rho(\jt)$, problem~\eqref{eq:genvalprob} gives a more interpretable understanding of the dependence between $X_1$ and $Y_1$. For example, if $\rho(\jt)$ is chosen to be Pearson's correlation, then~\eqref{eq:genvalprob} helps understand how well $Y_1$ can be predicted from $X_1$ using a linear function; and if $\rho(\jt)$ is chosen to be Spearman's correlation, then it helps understand how well $Y_1$ can be predicted from $X_1$ using a general monotone transform.
	
	In view of problems~\eqref{eq:baseprob}~and~\eqref{eq:genvalprob}, recently, Chatterjee~\cite{sc_corr} introduced a new nonparametric data rank-based correlation coefficient $\xi_n$ (see~\eqref{eq:T_n} below for definition). This coefficient in~\cite{sc_corr} possesses a combination of natural, but unique characteristics not exhibited by other measures. In particular, it converges to $0$ if and only if $X_1$ and $Y_1$ are independent and to $1$ if and only if $Y_1$ is a measurable function of $X_1$, as long as $X_1$ and $Y_1$ are non-degenerate. In fact, Chatterjee's coefficient converges to the Dette-Siburg-Stoimenov measure (see~\cite{dette2013copula}; also see~\eqref{eq:popdef}) for general bivariate distributions. It also produces a \emph{consistent test} against all \emph{fixed} alternatives for problems~\eqref{eq:baseprob}~and~\eqref{eq:genvalprob}, and is computable in (near) linear time (in terms of sample size), making it suitable for large-scale applications. Furthermore, through extensive simulations in~\cite{sc_corr}, the author argued that $\xi_n$ converges to a population measure $\xi(\jt)$ (introduced first in~\cite{dette2013copula}; see~\eqref{eq:popdef} below) that captures how well $Y_1$ can be predicted using general measurable functions of $X_1$ (see~\cref{sec:degtest} for more details). This gives the testing problem~\eqref{eq:genvalprob} using $\rho(\jt)=\xi(\jt)$, a natural and completely nonparametric interpretation. Consequently, it has attracted much attention in the past two years, in terms of both applications and theory (see~\cite{azadkia2019simple,cao2020correlations,chatterjee2020insights,deb2020kernel,fruciano2020does,huang2020kernel,shi2020power,strothmann2022rearranged,azadkia2021fast,han2022azadkia,lin2023failure,lin2021boosting,lin2022limit,chatterjee2022estimating,fuchs2021quantifying,Zhang2023,shi2021azadkia,bickel2022measures,ansari2022simple,holma2022correlation,Griessenberger2022}). The goal of this article is to expand the theoretical understanding of $\xi_n$ for the widely popular testing problems in~\eqref{eq:baseprob}~and~\eqref{eq:genvalprob} under general local alternatives (that is, when \emph{alternative} converges to \emph{null} as the sample size $n$ increases) and obtain exact detection thresholds. Before discussing our main results, let us first present Chatterjee's test statistic in~\cite{sc_corr}.
	
	%obtain the \emph{exact detection boundary} for this test under \emph{changing} sequence of both \emph{nonparametric} and \emph{parametric} alternatives converging to the null. In the process, we give precise characterizations of its \emph{local power}.   
	
	We arrange the data as $(X_{(1)},Y_{(1)}), \ldots , (X_{(n)},Y_{(n)})$ so that $X_{(1)}\leq \ldots \leq X_{(n)}$ and $Y_{(i)}$ is the $Y$ value \emph{concomitant} to $X_{(i)}$. Let $R_{n,i}$ be the rank of $Y_i$, i.e.,
	$$R_{n,i}:=\sum_{j=1}^n \ind(Y_j\leq Y_i).$$
	Now consider the statistic given by
	\begin{equation}
		\label{eq:T_n}
		\xi_n := 1-\frac{3}{n^2-1}\sum\limits_{i=1}^{n-1}\left|R_{n,(i+1)}-R_{n,(i)}\right|.
	\end{equation}
	Here $R_{n,(i)}$ is the rank of $Y_{(i)}$. Note that this statistic is well defined if there are no ties among $X_i$'s. For a more general definition of $\xi_n$ that allows for ties, see~\cite[Page 2]{sc_corr}. For technical convenience, we will work with the above definition of $\xi_n$ instead of the more general definition that takes potential ties into account. In particular, we will assume the existence of marginal densities of $X$'s and $Y$'s for the rest of the paper.
	
	It has been established in Theorem 1.1 of \cite{sc_corr} that $\xi_n$, as defined in~\eqref{eq:T_n}, converges almost surely to the \emph{Dette-Siburg-Stoimenov} measure (see~\cite{dette2013copula})
	\begin{equation}\label{eq:popdef}
		\xi(\jtd):=6\int \E\left[\P(Y\leq t|X)-\P(Y\leq t)\right]^2\myd(t)\,dt,
	\end{equation}
	where $\jtd(\cdot,\cdot)$ denotes the density corresponding to the distribution function $\jt(\cdot,\cdot)$, with marginal densities $\mxd(\cdot)$ and $\myd(\cdot)$. In various bivariate copula based models, $\xi(\jtd)$ turns out to be a monotonic function of the natural dependence parameter (see examples 1.(a) - (d) in~\cite[Page 9 and Theorem 2]{dette2013copula}; also see~\cref{sec:degtest}), thereby making it natural and interpretable as a way to measure the strength of dependence between $X_1$ and $Y_1$.
	
	\subsection{Problem setup and summary of main contributions}\label{sec:mainres}
	
	We will use the standard framework of local power analysis taken from~\cite{ingster1987minimax,ingster1993asymptotically} which is popularly used in independence testing procedures, see e.g.,~\cite{berrett2021optimal,kim2022minimax}. Towards this direction, consider a \emph{triangular array} of i.i.d. random variables $(X_{n,1}\ccom Y_{n,1}),\ldots ,(X_{n,n}\ccom Y_{n,n})$ from a bivariate density $\jtd^{(n)}(\cdot,\cdot)$ with marginals $f_X^{(n)}(\cdot)$, $f_Y^{(n)}(\cdot)$. Note that the joint distribution is no longer fixed, but changes with the sample size. We analyze the testing problem
	\begin{equation}\label{eq:revprob}
		\Hy_{0,n}:\xj=\xi_0 \quad \mbox{versus} \quad \Hy_{1,n}: \xj=\xi_0+c_n, \quad \xi_0\in [0,1)
	\end{equation}
	where $\xi(\cdot)$ is as defined in~\eqref{eq:popdef} and
	\begin{equation}\label{eq:nonreg}
		c_n\downarrow 0 \quad \implies \xj\downarrow \xi_0 \quad\mbox{as}\ n\to\infty.
	\end{equation} 
	Clearly distinguishing between $\Hy_{0,n}$ and $\Hy_{1,n}$ becomes harder as $c_n$ converges to $0$ faster. Also note that by~\cite[Theorem 1.1]{sc_corr} and~\cite[Theorem 2]{dette2013copula}, testing $\xi_0=0$ exactly corresponds to the test of independence as in~\eqref{eq:baseprob}. As $\xi_n-\xj$ converges almost surely to $0$ under mild assumptions (see~\cite[Theorem 1.1]{sc_corr}), a natural level $\alpha$ test function for~\eqref{eq:revprob} is given by
	\begin{equation}\label{eq:asymtest} \pha:=\ind(\xi_n-\xi_0\geq z_{n,\alpha}),
	\end{equation}
	where $z_{n,\alpha}$ is chosen appropriately so as to satisfy the level $\alpha$ condition. Define the power function of $\pha$ as
	\begin{equation}\label{eq:powdef}\bpj:=\P(\pha=1).\end{equation}
	%From~\cite[Theorems 1.1 and 2.1]{sc_corr} it follows that $\pha$ is consistent  against fixed alternatives, i.e., $$\P(\pha=1)\to 1$$
	%if $\xi(\jtd)>0$ where $\jtd(\cdot,\cdot)$ is fixed and does not change with $n$. 
	Our goal is to investigate the following: 
	\emph{``What is the fastest decaying $c_n\downarrow 0$ such that $\pha$ can distinguish between the null and the alternative, i.e., $\bpj\to 1$ as $n\to\infty$ under $\Hy_{1,n}$? Further, is $\pha$ optimal for the  testing problem~\eqref{eq:revprob}?''}
	
	In this paper, we provide an exact answer to this question. We find a dichotomy based on whether or not the limiting Dette-Siburg-Stoimenov measure $\xi_0$ is zero or positive. We make a two-fold contribution in both of these regimes. Note that the case where $\xi_0=0$ in \eqref{eq:revprob} corresponds to the important problem of \emph{independence} testing. On the other hand, when $\xi_0>0$, we refer to the null hypothesis in \eqref{eq:revprob} as the problem of testing for degree of association between $X$ and $Y$.
	
	\emph{Critical detection boundary of $\phi_n$ for independence testing $(\xi_0=0)$:} For this case, in~\cref{th:power}, we show that the power of $\phi_n$ converges to $\alpha$, or $1$, or a number in $(\alpha,1)$ depending on whether $\sqrt{n}c_n$ converges to $0$, or $\infty$, or some number in $(0,\infty)$, respectively. This indicates that the best choice of $c_n$ that ensure $\bpj\to 1$ is $c_n=O(n^{-1/2})$. 
	
	This is however a suboptimal threshold in terms of detecting dependence. To see this, consider the case where $(X_1,Y_1)$ is a bivariate normal distribution with correlation $\rho_n$. Theorem ~\ref{th:power} implies that $\bpj$ converges to $\alpha$, or $1$, or a number in $(\alpha,1)$ depending on whether $n^{1/4}\rho_n$ converges to $0$, or $\infty$, or some number in $(0,\infty)$, respectively. This implies that $\pha$ has a detection boundary of $O(n^{-1/4})$ in terms of $\rho_n$. However, it is well known from Le Cam's theory of local asymptotic normality that the optimal detection threshold for $\rho_n$ is of the order $n^{-1/2}$ and not $n^{-1/4}$.  In Section ~\ref{sec:applications}, we give concrete examples of this detection boundary in some other local parametric alternatives, viz. simple mixtures, and noisy nonparametric regression.

	\emph{Minimax optimality of $\pha$ for testing degree of association $(\xi_0>0)$:} For this case, in~\cref{thm:test_assoc}, part 1, we show that $\bpj$ converges to $1$ provided $\sqrt{n}c_n\to\infty$. In contrast to the $\xi_0=0$ case, we prove in~\cref{thm:test_assoc}, part 2, that  $c_n=O(n^{-1/2})$ is indeed the optimal threshold when $\xi_0>0$ in a local asymptotic minimax sense (see~\eqref{eq:poweralt}). In other words, if $\sqrt{n}c_n\to (0,\infty)$, then no test can uniformly have power converging to $1$. Therefore $c_n=O(n^{-1/2})$ is the correct detection boundary and this highlights the minimax optimality of Chatterjee's correlation coefficient for the testing problem~\eqref{eq:revprob} when $\xi_0>0$; see Section~\ref{sec:degtest} for more details.
	
	Additionally, as a technical device for the results above, we develop a central limit theorem for $\xi_n$.
	
	\emph{Central limit theorem for shrinking alternatives:} In Theorem~\ref{th:xi_n_wass}, we show that for any sequence of alternatives specified by $\xj\to 0$, $\xi_n$ is asymptotically normal. Further, we characterize the limiting mean and the limiting variance explicitly. This is a wide generalization of the asymptotic normality results in \cite{cao2020correlations,sc_corr,shi2020power} which are stated under independence, or the fast shrinking alternatives $\xj=O(n^{-1/2})$. Theorem~\ref{th:xi_n_wass} weakens this assumption only to $\xj\to 0$ and might be of independent interest. This CLT is obtained using Stein's method-based technique (see~\cite{chatt_nn}). After the first draft of our paper, a number of other interesting limit theorems for $\xi_n$ or modified versions of $\xi_n$ have been established that highlight the interest in Chatterjee's correlation; see e.g.~\cite{han2022azadkia,lin2021boosting,lin2022limit,lin2023failure,ansari2022simple, Zhang2023} to name a few. In the following section, we will summarize the other relevant contributions to the problem considered here.
	
	\subsection{Comparison with existing literature}\label{sec:comparelit}
	Prior to the first version of our paper, the theoretical analysis of $\xi_n$ had been carried out in three papers. In Chatterjee's paper~\cite{sc_corr}, the asymptotic distribution of $\xi_n$ was derived under $\mathrm{H}_0$ as in~\eqref{eq:baseprob} and its consistency against fixed (not changing with $n$) alternatives was proved. Two other papers~\cite{shi2020power}~and~\cite{cao2020correlations} have analyzed $\xi_n$ under smooth contiguous alternatives (see~\cite[Chapter 6]{vaart}). For example, under the mixture type alternatives in Section \ref{sec:appmix}, their results show that $\pha$ is powerless along ``contiguous" alternatives, i.e., in cases where the mixing probability shrinks to zero at a $n^{-1/2}$ rate. The proofs in~\cite{cao2020correlations,shi2020power} use Le Cam's third lemma (see~\cite[Example 6.7]{vaart}) which requires analyzing $\xi_n$ and the likelihood ratio jointly but \emph{only under the null}, (that is, when $X_{n,1}$ and $Y_{n,1}$ are independent) followed by a change of measure formula that only holds under contiguous scales and not beyond. In contrast, the focus of our paper is characterizing the exact detection boundary of $\xi_n$, which as we shall see, happens to be in the non-contiguous regime. We therefore adopt a proof strategy using Stein's method-based technique of local dependence (see~\cite{chatt_nn}) and non-asymptotic projection results. While the focus of the paper is in the bivariate setting, it should be noted that multivariate versions of $\xi_n$ have been studied in the literature (see \cite{azadkia2019simple,deb2020kernel,ansari2022simple}) and asymptotic distributions under independence have been obtained in \cite{deb2020kernel,shi2021azadkia,ansari2022simple}.
	
	After the first draft of our paper, a number of other results of interest have further solidified the understanding of $\xi_n$ or modified versions thereof. In \cite{lin2021boosting}, the authors modified $\xi_n$ by incorporating more ``right nearest neighbors" in its definition. They then proved that in the bivariate Gaussian independence testing problem (see~\cref{sec:applications}), as the number of right nearest neighbors grow, the detection boundary moves from $O(n^{-1/4})$ to nearly $O(n^{-1/2})$. On the other hand, in \cite{lin2022limit}, the authors show that $\xi_n$ is asymptotically normal even when $\xj=\xi_0>0$. The limiting variance, in that case, is no longer universal and depends on the data distribution. The authors in \cite{lin2022limit} obtain a consistent, analytic estimator of this limiting variance. In the follow-up paper~\cite{lin2023failure}, the authors show that a natural bootstrap estimator of this limiting variance is not consistent under independence. On the other hand, \cite{Zhang2023} proved the asymptotic normality of a symmetrized version of $\xi_n$. We also refer the reader to the recent review paper \cite{chatterjee2022survey} which provides a comprehensive overview of dependence/association measures that are based on  $\xi_n$.
	
	\subsection{Organization}\label{sec:org}
	The rest of the paper is organized as follows. In \cref{sec:main_results} we describe our main results when $\xi_0=0$. In particular,~\cref{th:xi_n_wass}~and~\cref{th:power} yield asymptotic limits of $\xi_n$ (centered and scaled) and asymptotic expressions for $\bpj$ depending on how fast $c_n\downarrow 0$. Applications of these results to test for independence in popular local parametric models are provided in \cref{sec:applications}. In particular,~\cref{cor:xi_mix}~and~\cref{cor:xi_reg}  highlight the $n^{-1/4}$ detection boundary. In \cref{sec:degtest} (see~\cref{thm:test_assoc}), we show that the test $\phi_n$ constructed in \eqref{eq:asymtest} is minimax optimal for the testing problem~\eqref{eq:revprob} when $\xi_0>0$. %Numerical studies can be found in \cref{sec:simuls}. 
	The proofs of all main results are presented in the Appendix A. Finally, Appendix B contains the proofs of some additional results and technical lemmas.
	
	\section{Critical detection boundary of $\phi_n$ for independence testing}\label{sec:main_results}
	In this section, we first show that under a wide class of bivariate distributions satisfying~\eqref{eq:nonreg}, $\xi_n$, appropriately centered by its mean and scaled by its standard deviation,  converges to a standard normal distribution. We provide a precise characterization of the limiting bias and standard deviation of $\xi_n$. We then use these findings to obtain the detection threshold and asymptotic power of the test $\pha$ as described in~\eqref{eq:asymtest}.
	%This section is broadly divided into two parts: 
	%\begin{itemize}
	%	\item[(a)] In~\cref{sec:asnorm}, we show that under a wide class of bivariate distributions satisfying~\eqref{eq:nonreg}, $\xi_n$, appropriately centered by its mean and scaled by its standard deviation,  converges to a standard normal distribution. We then provide a precise characterization of the limiting bias and standard deviation of $\xi_n$. Moreover all our results yield come with explicit sample guarantees. 
	%	\item[(b)] In~\cref{sec:aspow}, we use the findings from~\cref{sec:asnorm} to obtain the detection threshold and asymptotic power of the test $\pha$ as described in~\eqref{eq:asymtest}.
	%\end{itemize}
	
	%\subsection{Asymptotic Normality of $\xi_n$ under~\eqref{eq:nonreg}}\label{sec:asnorm}

	%The goal of this section is to derive the asymptotic normality of $\xi_n$ after suitable centering and scaling. 
	
	Recall the setting from the Introduction. We consider $(X_{n,k},Y_{n,k})_{1\leq k\leq n}$, $n\in\mathbb{N}$, a triangular array of i.i.d. random variables drawn from a bivariate density $\jtd^{(n)}(\cdot,\cdot)$. All the probabilities and expectations taken in the sequel are with respect to the measure induced by $\jtd^{(n)}(\cdot,\cdot)$.%\par 
	
	%To motivate the asymptotic normality of $\xi_n$, let us begin with a simple observation. 
	Notice that from~\eqref{eq:T_n}, with the identity  $\underset{i\neq j}{\sum\sum}|i-j|=n(n^2-1)/3$, one has
	$$
	\xi_n=\dfrac{3}{n^2-1}\left(\dfrac{1}{n}\underset{i\neq j}{\sum\sum }\abs*{i-j}-\sum_{i=1}^n\abs*{R_{n,(i+1)}-R_{n,(i)}}\right).
	$$
	
	%By the standard Glivenko-Cantelli Theorem, we know that $\hat{F}_n(\cdot)$ and $F^{(n)}_Y(\cdot)$ are ``close" almost surely in the $L^{\infty}$ norm. This motivates the definition of an oracle version of $\xi_n$ as follows:
	%\begin{equation}\label{eq:proj_defn} 
	%	\xi_n^*:=\dfrac{3n}{n^2-1}\left(\dfrac{1}{n}\underset{i\neq j}{\sum\sum }\abs*{\F(Y_{n,i})-\F(Y_{n,j})}-\sum_{i=1}^n\abs*{F^{(n)}_Y(Y_{n,(i+1)})-F^{(n)}_Y(Y_{n,(i)})}\right).
	%\end{equation}
	%Intuitively of course, $\xi_n^*$ is mathematically more tractable than $\xi_n$ as it replaces the random function $\hat{F}_n(\cdot)$ by the deterministic function $F^{(n)}_Y(\cdot)$. 
	We will need the following definitions.
	
	\begin{defn}[Kantorovic-Wasserstein distance]\label{def:wass} Given any two probability measure $\mu$ and $\nu$ on the real line, the Kantorovic-Wasserstein distance between $\mu$ and $\nu$ is defined as
		$$
		\mW(\mu,\nu):=\sup\left\{\bigg|\scaleobj{0.7}{\int} h\,d\mu-\scaleobj{0.7}{\int} h\,d\nu\bigg|:\ h(\cdot)\ \mathrm{ is\ Lipschitz },\ \lVert h\rVert_{\mathrm{Lip}}\leq 1\right\}.$$
	\end{defn}
	In our applications, we will fix $\nu$ as the \emph{standard Gaussian distribution}, which leads to the following natural notion of ``distance to Gaussianity" based on~\cref{def:wass}.
	\begin{defn}[Distance to Gaussianity]\label{def:dtog}
		Let $\nu$ be the standard Gaussian law and $T$ be a random variable with the law $\mu$. Then the distance between $T$ and the standard Gaussian is defined as
		$$\mD(T):=\mW(\mu,\nu),$$
		where $\nu$ is the standard normal law.
	\end{defn}

	We first state our assumptions.
	
	\begin{Assumption}\label{as:Lipschitz} There exist functions $L_i^{(n)}(\cdot):\R\to [0,\infty)$ for $i=1,2$ and numerical constants $\kappa_1>0$, $\eta\in (0,1]$, and $\theta\in (1,\infty]$ such that $\forall\, y,x_1,x_2,$
		\begin{equation}\label{eq:Lipschitz1}
			\,\,\abs*{\P(Y_{n,1}\geq y|X_{n,1}=x_1)-\P(Y_{n,1}\geq y|X_{n,1}=x_2)}\leq (1+L_1^{(n)}(x_1,y)+L_2^{(n)}(x_2,y))\abs*{x_1-x_2}^{\eta},
		\end{equation}
		\begin{equation}\label{eq:Lipschitz2}
			\limsup\limits_{n\to\infty} \int (L_i^{(n)}(x,y))^{\theta}\mxd^{(n)}(x)\myd^{(n)}(y)\,dx\, dy \leq \kappa_1,
		\end{equation}
		where $\mxd^{(n)}(\cdot)$ and $\myd^{(n)}(\cdot)$ are the marginal densities of $X_{n,1}$ and $Y_{n,1}$ under the joint density $\jtd^{(n)}(\cdot,\cdot)$.	
	\end{Assumption}
	%Note that~\cref{as:Lipschitz} is weaker than the standard $\eta$ H\"{o}lder condition, in that the H\"{o}lder constants are allowed to depend on $y,x_1,x_2$ and also $n$.
	\begin{Assumption}\label{as:nn_dists} There exist numerical constants $\gamma>1$ and $\kappa_2>0$ such that
		$$\limsup\limits_{n\to\infty} \E|X_{n,1}|^{\gamma}\leq \kappa_2.$$
	\end{Assumption}

	\noindent Note that~\cref{as:Lipschitz} is weaker than the standard $\eta$ H\"{o}lder condition, in that the H\"{o}lder constants are allowed to depend on $y,x_1,x_2$ and also $n$. In this sense, it is weaker than the assumptions in \cite{azadkia2019simple} and related papers.%~\cref{as:Lipschitz} ensures that $X_{N(1)}$ and $X_1$ are ``sufficiently" close.
	~\cref{as:nn_dists} is a standard moment assumption to control the tail of the distribution of $X_{n,i}$'s. This tail behavior crucially affects the distance between $X_{n,(i)}$ and $X_{n,(i+1)}$, see e.g.,~\cite[Section 2.2]{Biau2015}.
	
	We are now in position to state our main result. 
	In the following theorem (see Section~\ref{sec:proofs-sec2} for a proof), we show that $\xi_n$, appropriately centered and scaled, has a limiting normal distribution in the asymptotic regime $\xi(\jtd^{(n)})\to 0$.
	\begin{theorem}\label{th:xi_n_wass} 
		For any bivariate density $f^{(n)}_{X,Y}(x,y)$ satisfying Assumptions \ref{as:Lipschitz} and \ref{as:nn_dists}, there is a numerical constant $C(\eta, \theta, \gamma, \kappa_1,\kappa_2)>0$, i.e., depending only on the parameters $\eta, \theta, \gamma, \kappa_1,\kappa_2$ from Assumptions \ref{as:Lipschitz} and \ref{as:nn_dists}, such that: 
		\begin{itemize}
			\item[(i)] For all $n\ge 1,$  we have
			$$\sqrt{n}\abs*{\E(\xi_n)-\xi(f^{(n)}_{X,Y})}\le Cn^{-1/2}+ C\sqrt{n}b_n\ind(\xj>0).$$
			\item[(ii)] Moreover,
			$$
			\mD\left(\dfrac{\sqrt{n}\left(\xi_n-\xj\right)}{\sqrt{2/5}}\right) \le Cn^{-1/2}+C\left(\xj+\sqrt{n\log n}\,b_n\right)^{1/2}\ind(\xj>0),
			$$
			where $\mD$ is the Wasserstein distance to normality defined in  \cref{def:dtog}, and $b_n$ is defined as
			
			\begin{equation}\label{eq:defbn}
				b_n:=n^{-\frac{\gamma}{\gamma+1}}(\log{n})^2+\left(\dfrac{(\log{n})^2}{n}\right)^{\left(\frac{\gamma(\theta-1)}{\theta(\gamma+1)}\wedge \frac{\eta \gamma}{\gamma+1}\right)}.
			\end{equation}
		\end{itemize}
	\end{theorem}
	
	In particular, part (ii) of \cref{th:xi_n_wass} shows that if $\xi(\jtd^{(n)})\to 0$ and $\frac{\theta-1}{\theta}\wedge \eta > \frac{\gamma+1}{2\gamma}$, then
	\begin{equation}\label{eq:defbn3}
		\sqrt{n}(\xi_n-\xj)\overset{w}{\longrightarrow}\mathcal{N}(0,2/5).
	\end{equation}
	Therefore, in the entire asymptotic regime $\xi(\jtd^{(n)})\to 0$, we see that $\xi_n$ has the same limiting variance, which matches the case where the $X$'s and $Y$'s are mutually independent (also see \cite[Theorem 2.1]{sc_corr}). A couple of remarks on the assumptions needed in \cref{th:xi_n_wass} are now in order.
	
	\begin{remark}
		To understand the condition $\frac{\theta-1}{\theta}\wedge \eta>\frac{\gamma+1}{2\gamma}$, let us first focus on a simple case. Assume that the conditional probability in \eqref{eq:Lipschitz1} is uniformly Lipschitz in $y,x$. In that case, $\eta=1$ and $L_1^{(n)}(\cdot)$, $L_2^{(n)}(\cdot)$ are both uniformly bounded. In view of \eqref{eq:Lipschitz2}, this implies $\theta=\infty$. Therefore $\frac{\theta-1}{\theta}\wedge \eta=1$. Recall from \cref{as:nn_dists} that $\gamma>1$ denotes the number of finite moments of $X_{n,1}$. Therefore, $\frac{\gamma+1}{2\gamma}<1$. As a result, the condition $\frac{\theta-1}{\theta}\wedge \eta>\frac{\gamma+1}{2\gamma}$ holds in this case.
		
		\noindent More generally speaking, the condition $\frac{\theta-1}{\theta}\wedge \eta>\frac{\gamma+1}{2\gamma}$ can be rewritten as the combination of the following conditions:
		
		$$\frac{1}{\theta}+\frac{1}{2\gamma}<1, \quad \frac{1}{\gamma} < 2\eta-1.$$
		
		Therefore, if the H\"{o}lder exponent $\eta$ in \eqref{eq:Lipschitz1} is greater than $\frac{1}{2}$, then the condition $\frac{\theta-1}{\theta}\wedge \eta>\frac{\gamma+1}{2\gamma}$ holds whenever $L_1^{(n)}(X_{n,1},Y_{n,2})$, $L_2^{(n)}(X_{n,1},Y_{n,2})$, and $X_{n,1}$ have sufficiently light tails.
	\end{remark}
	
	\begin{remark}
		In this paper, we have restricted ourselves to the case where the H\"{o}lder exponent $\eta\le 1$ instead of expanding to higher-order H\"{o}lder regularity. This is because parts (i) and (ii) of \cref{th:xi_n_wass} show that the order of the bias $\E(\xi_n)-\xj$ is $o(1/\sqrt{n})$ which is already of a smaller order than its fluctuation $(\xi_n-\E \xi_n)=O_p(1/\sqrt{n})$. As a result, imposing stronger regularity leads to no further gains here. The situation would be different for the multivariate version(s) of Chatterjee's correlation (see \cite{azadkia2019simple,deb2020kernel}) where the bias would reflect a curse of dimensionality and be of a higher order than the fluctuations. While this is an interesting question, it is currently beyond the scope of this paper.
	\end{remark}
	%We conclude this section by summarizing its contribution.
	Note that~\cref{th:xi_n_wass} greatly generalizes the asymptotic normality theorems of \cite{cao2020correlations,sc_corr,shi2020power} that are only valid under the null hypothesis of independence, or for contiguous parametric alternatives and cannot be used to analyze $\xi_n$ along non-contiguous and nonparametric alternatives. Therefore we believe~\cref{th:xi_n_wass} is of independent interest and hence we have presented it here as a separate result. Since Theorem \ref{th:xi_n_wass} aims to provide asymptotic normality for any alternative with $\xj\to 0$, we can no longer use traditional instruments such as Le Cam's third lemma, which is the main tool in~\cite{cao2020correlations,shi2020power}. Note that~\cref{th:xi_n_wass} holds for a large nonparametric class of distributions and comes with finite sample guarantees. In order to prove~\cref{th:xi_n_wass}, we use Stein's method of normal approximation for locally dependent structures~\cite{chatt_nn} and some explicit bias and variance computations. To elaborate briefly, we observe that $\xi_n$ can be rewritten as 
	\begin{equation}\label{eq:to_proj}
		\begin{split}
			\xi_n=\dfrac{3n}{n^2-1}\left(\dfrac{1}{n}\underset{i\neq j}{\sum\sum }\abs*{\hat{F}_n(Y_{n,i})-\hat{F}_n(Y_{n,j})}-\sum_{i=1}^n\abs*{\hat{F}_n(Y_{n,(i+1)})-\hat{F}_n(Y_{n,(i)})}\right),
		\end{split}
	\end{equation}
	
	where $\hat{F}_n(\cdot )$ is the empirical cumulative distribution function (CDF) of $Y_{n,1},\ldots ,Y_{n,n}$. Let $F^{(n)}_Y(\cdot)$ denote the population CDF of $Y_{n,1}$. Recall that $X_{n,(1)}\leq \ldots \leq X_{n,(n)}$ and $Y_{n,(i)}$ is the $Y$ value concomitant to $X_{n,(i)}$. The main idea is to show that we can replace $\hat{F}_n(\cdot )$ by $F^{(n)}_Y(\cdot)$ asymptotically. In other words, we show that $\xi_n$ is close (with quantitative error bounds) to $\xi_n^*$, where
	\begin{equation}
		\label{eq:oracle_stat}
		\xi_n^*:=\dfrac{3n}{n^2-1}\left(\dfrac{1}{n}\underset{i\neq j}{\sum\sum }\abs*{\F(Y_{n,i})-\F(Y_{n,j})}-\sum_{i=1}^n\abs*{F^{(n)}_Y(Y_{n,(i+1)})-F^{(n)}_Y(Y_{n,(i)})}\right).
	\end{equation}
	Next, we quantify the proximity of the distribution of $\xi_n^*$ (appropriately centered and scaled) to a standard normal distribution, using~\cite[Theorem 3.4]{chatt_nn}, to establish~\cref{th:xi_n_wass}.
	%\subsection{Asymptotic Power of Independence Test}\label{sec:aspow}
	
	%Let us try to understand this limiting behavior using~\eqref{eq:defbn3}. 
	
	Now, we characterize the asymptotic behavior of $\bpj$ defined in~\eqref{eq:powdef}.  First, suppose that $\sqrt{n}\xj\to\infty$. As $\sqrt{n}\left(\xi_n-\xj\right)=O_p(1)$, by~\eqref{eq:defbn3}, it is clear that $$\sqrt{n}\xi_n=\underbrace{\sqrt{n}\left(\xi_n-\xj\right)}_{O_p(1)}+\sqrt{n}\xj\overset{P}{\longrightarrow}\infty.$$ Therefore, the last equality in~\eqref{eq:powdef} coupled with the above display implies that whenever $\sqrt{n}\xj\to\infty$, we have $\bpj\to 1$. By using a similar sequence of arguments, we get the complete picture of the limiting behavior of $\bpj$, formalized in the theorem below.
	
	\begin{theorem}\label{th:power}
		Suppose $\xj\to 0$ and Assumptions \ref{as:Lipschitz}, \ref{as:nn_dists} are satisfied with  $\eta,\,\theta,\,\gamma$ such that $\frac{\theta-1}{\theta}\wedge \eta > \frac{\gamma+1}{2\gamma}$. Let $z_{\alpha}$ be the upper $\alpha$ quantile of the standard normal distribution. Then the test $\phi_n$, defined in~\eqref{eq:asymtest}, with $z_{n,\alpha}=n^{-1/2}z_{\alpha}\sqrt{2/5}$ has a power function satisfying
		$$\bpj=\P(\sqrt{n}\xi_n\ge z_{\alpha}\sqrt{2/5})
		=1-\Phi\left(z_{\alpha}-\sqrt{n}\xi(f^{(n)}_{X,Y})/\sqrt{2/5}\right)+o(1).$$	
		On the other hand, when $\xj\to c>0$, $\phi_n$ has asymptotic power equal to one. In particular, we have the following explicit characterization of the asymptotic power of $\phi_n$:
		\begin{equation}\label{eq:asymp_power}
			\lim\limits_{n\to \infty}\bpj=\begin{cases}\alpha &\text{ if }\sqrt{n}\xi(f^{(n)}_{X,Y})\to 0
				\\1-\Phi\left(z_{\alpha}-c_0/\sqrt{2/5}\right) &\text{ if }\sqrt{n}\xi(f^{(n)}_{X,Y})\to c_0\in(0,\infty)
				\\1 &\text{ if }\sqrt{n}\xi(f^{(n)}_{X,Y})\to \infty.
			\end{cases}
		\end{equation}
	\end{theorem}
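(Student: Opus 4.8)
The plan is to derive the whole statement from part ii) of \cref{th:xi_n_wass}, which bounds the Wasserstein distance to normality of $Z_n:=\sqrt n(\xi_n-\xj)/\sqrt{2/5}$ by $Cn^{-1/2}+C(\xj+\sqrt{n\log n}\,b_n)^{1/2}\ind(\xj>0)$. The first step is to check that this bound vanishes under the hypotheses. The term $Cn^{-1/2}$ is harmless; for the rest, $\xj\to0$ by assumption, and a direct computation with \eqref{eq:defbn} gives $\sqrt{n\log n}\,b_n\to0$: its first summand is of order $n^{-(\gamma-1)/(2(\gamma+1))}$ up to logarithmic factors and vanishes because $\gamma>1$, while its second summand is of order $n^{\,1/2-\frac{\gamma}{\gamma+1}(\frac{\theta-1}{\theta}\wedge\eta)}$ up to logarithmic factors and vanishes precisely because of the assumed inequality $\frac{\theta-1}{\theta}\wedge\eta>\frac{\gamma+1}{2\gamma}$. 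Hence $\mD(Z_n)\to0$.

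Next I would turn this Wasserstein bound into a uniform statement about distribution functions. Writing $F_n$ for the law of $Z_n$ and $\delta_n:=\sup_{x\in\R}\abs*{F_n(x)-\Phi(x)}$, the convergence $\mD(Z_n)\to0$ forces $Z_n$ to converge weakly to $\mathcal N(0,1)$, and since the limiting CDF $\Phi$ is continuous, P\'olya's theorem yields $\delta_n\to0$ (alternatively one may invoke the classical comparison of the Kolmogorov distance with the square root of the Wasserstein distance, valid since the limit has bounded density, which even furnishes a rate). This uniformity is the crux: the power of $\pha$ can be written exactly as
$$\bpj=\P\!\left(\sqrt n\,\xi_n\ge z_\alpha\sqrt{2/5}\right)=\P\!\left(Z_n\ge a_n\right)=1-\P\!\left(Z_n<a_n\right),\qquad a_n:=z_\alpha-\frac{\sqrt n\,\xj}{\sqrt{2/5}},$$
and $a_n$ need not converge; but $\abs*{\P(Z_n<a_n)-\Phi(a_n)}\le\delta_n$ for every $n$ (this follows from $\Phi(a_n)-\delta_n\le F_n(a_n^-)\le F_n(a_n)\le\Phi(a_n)+\delta_n$, using continuity of $\Phi$), so $\P(Z_n<a_n)=\Phi(a_n)+o(1)$ irrespective of the behavior of $a_n$. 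This yields the first displayed identity of the theorem, namely $\bpj=1-\Phi\!\left(z_\alpha-\sqrt n\,\xj/\sqrt{2/5}\right)+o(1)$.

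Finally, the three-regime formula \eqref{eq:asymp_power} drops out by inspecting $a_n$: if $\sqrt n\,\xj\to0$ then $a_n\to z_\alpha$, so $\bpj\to1-\Phi(z_\alpha)=\alpha$ by the definition of $z_\alpha$; if $\sqrt n\,\xj\to c_0\in(0,\infty)$ then $a_n\to z_\alpha-c_0/\sqrt{2/5}$, so $\bpj\to1-\Phi(z_\alpha-c_0/\sqrt{2/5})$; and if $\sqrt n\,\xj\to\infty$ then $a_n\to-\infty$, so $\Phi(a_n)\to0$ and $\bpj\to1$. The only step that requires any genuine care is the passage from $\mD(Z_n)\to0$ to control of $\P(Z_n<a_n)$ for a threshold $a_n$ that may fail to converge; this is exactly where the continuity of the Gaussian limit is used (through P\'olya's theorem or the Kolmogorov--Wasserstein comparison), and the remainder is routine bookkeeping once \cref{th:xi_n_wass} is in hand.
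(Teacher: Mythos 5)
Your proof is correct and takes essentially the same route as the paper, which derives \cref{th:power} from part ii) of \cref{th:xi_n_wass} via the decomposition $\sqrt{n}\xi_n=\sqrt{n}(\xi_n-\xj)+\sqrt{n}\xj$ and explicitly leaves the details to the reader. Your two supporting computations are both sound: the check that $\sqrt{n\log n}\,b_n\to 0$ under $\frac{\theta-1}{\theta}\wedge\eta>\frac{\gamma+1}{2\gamma}$ (and $\gamma>1$) is exactly right, and the passage from the Wasserstein bound to uniform control of $\P(Z_n<a_n)$ for a possibly non-convergent threshold $a_n$ (via P\'olya's theorem or the Kolmogorov--Wasserstein comparison) is precisely the detail needed to justify the first displayed identity, which the paper's sketch glosses over.
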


	Theorem \ref{th:power} reduces the power calculation to calculating the population measure of association under the alternative. Once again we emphasize that these results do not depend on any specific form of the alternative distribution. In \cref{sec:applications} we consider the applications of~\cref{th:power} to certain parametric classes of  alternatives, previously considered in the literature. In doing so, we discover that for smooth parametric alternatives, the detection threshold is seen on a non-standard scale of $n^{-1/4}$. This is much larger than the optimal detection threshold of $n^{-1/2}$ in parametric problems, thereby leading to the suboptimality of $\xi_n$ for testing independence (also see~\cite{shi2020power,cao2020correlations}). We provide a detailed account of this in the following section.
	
	\section{Applications}\label{sec:applications}
	
	Throughout this section, we will use the test $\phi_n$ in~\eqref{eq:asymtest} with $z_{n,\alpha}=n^{-1/2}z_{\alpha}\sqrt{2/5}$, where $z_{\alpha}$ is the upper $\alpha$ quantile of the standard normal distribution. To interpret the detection boundary from~\cref{th:power} in terms of its rate of decay with respect to $n$, it is crucial to note that $\xj$ (see \textbf{(P1)}) is the integrated \emph{squared} distance between a conditional and a marginal distribution function. For example, consider the case where $\jtd^{(n)}(\cdot,\cdot)$ is the standard bivariate density \emph{Gaussian} with correlation $\rho_n$. Then $Y|X=x\sim\mathcal{N}(\rho_n x,1-\rho_n^2)$. Let $\phi(\cdot)$ and $\Phi(\cdot)$ be the standard normal density and distribution functions, respectively. By first-order Taylor approximations, we then get
	\begin{align*}
		\xj&=6\int \E\left[\Phi\left(\frac{t-\rho_n X}{\sqrt{1-\rho_n^2}}\right)-\Phi(t)\right]^2\phi(t)\,dt
		=\rho_n^2\sqrt{3}/\pi+o(\rho_n^2).
		%\\ &=6\int \E\left[\Phi(t)-\rho_nX\phi(t)+o(\rho_n)-\Phi(t)\right]^2\phi(t)\,dt
	\end{align*} 
	This implies $\xj/\rho_n^2\to \sqrt{3}/\pi$, that is, $\xj$ scales like $\rho_n^2$ instead of $\rho_n$. This is the result of $\xj$ being an integrated \emph{squared} distance. Using this observation in~\eqref{eq:asymp_power}, we get that 
	\begin{equation}\label{eq:mixturedet}\lim\limits_{n\to \infty}\bpjr=\begin{cases}\alpha &\text{ if }n^{1/4}\rho_n\to 0\\
			1-\Phi\left(z_{\alpha}-c_1\right) 
			&\text{ if }n^{1/4}\rho_n\to c_0\in(0,\infty)\\
			1 &\text{ if }n^{1/4}\rho_n\to \infty.
		\end{cases}
	\end{equation}
	This shows that a $n^{-1/4}$ detection boundary emerges naturally out of Theorem~\ref{th:power} in the bivariate Gaussian setting.
	
	In this section we describe some applications of \cref{th:power} 
	in three popular local parametric models: mixture-type alternatives and noisy nonparametric regression. The detection thresholds and local powers for these two models are formally stated in~Corollaries~\ref{cor:xi_mix} and~\ref{cor:xi_reg}, respectively. 
	The analysis of local asymptotic power of various tests along parametric alternatives reveals specific features of popular parametric models that control the power of testing procedures. Consequently a lot of attention has been devoted to such analysis (see~\cite{Dhar2016,Kossler2007,Ledwina1986,Morgenstern1956,Nikitin1995,nikitin2003pitman}). Our general result as in Theorem~\ref{th:power} can be used directly to get detection boundaries and local powers under a number of popular alternatives, both along ``contiguous" (meaning $O(n^{-1/2})$ perturbations around the null) and non-contiguous scales, all in one go. We consider two such local parametric models in \cref{sec:applications}: 
	\begin{itemize}
		\item[(a)] Mixture type alternatives used in~~\cite{arias2020detection,Dhar2016,Farlie1960,Farlie1961,shi2020power,Stepanova2008}; see~\cref{sec:appmix},
		\item[(b)] Noisy nonparametric regression used in~\cite{sc_corr}; see~\cref{sec:noisereg}
	\end{itemize}

	\subsection{Simple mixture model}\label{sec:appmix}
	Consider the bivariate density of $(X,Y)$ defined by
	\begin{equation}\label{eq:contam_final}
		f_{X,Y,r}(x,y):=(1-r)\mxd(x)\myd(y)+r \cde(x,y)\quad \forall\ x,y\in\R,
	\end{equation}	
	where $\cde(\cdot,\cdot)$ is a bivariate density function that does not factor into the product of its marginals, $f_{X}(\cdot)$, $f_Y(\cdot)$ are univariate densities, and $r\in [0,1]$. We also note that if $r=0$, then $X$ is independent of $Y$. Therefore it suffices to test if $r=0$ or otherwise. Suppose that $g_{X,Y}(\cdot,\cdot)$ has marginals $f_X(\cdot)$ and $f_Y(\cdot)$, i.e.,
	\begin{equation}\label{eq:asnmix}
		\int_x g_{X,Y}(x,y)\,dx=f_Y(y), \quad \int_y g_{X,Y}(x,y)\,dy=f_X(x).
	\end{equation}
	Note that \eqref{eq:asnmix} implies that the marginals do not change under the alternative. Consequently, we cannot use marginal-based tests (e.g., goodness-of-fit on marginal distributions) to distinguish between the null and alternative. Instead, we will require an independence test as demonstrated above.
	Furthermore, we also assume that there exist $\kappa_1,\kappa_2 >0$, $\eta \in [0,1]$, and $\theta, \gamma>1$ such that
	\begin{equation}\label{eq:Lipschitz_for_mixture}
		\,\,\int_{y}^{\infty}\abs*{g_{Y|X=x_1}(t)-g_{Y|X=x_2}(t)}\,dt\leq (1+L_1(x_1,y)+L_2(x_2,y))\abs*{x_1-x_2}^{\eta},
	\end{equation}
	\begin{equation}\label{eq:Lipmoment}
		\limsup\limits_{n\to\infty} \int (L_i(x,y))^{\theta}\mxd(x)\myd(y)\,dx\, dy \leq \kappa_1,
	\end{equation}
	and
	\begin{equation}\label{eq:moment_for_mixture}
		\limsup\limits_{n\to\infty}\int|x|^{\gamma}f_X(x)\,dx < \kappa_2.
	\end{equation}
	Let us observe that~\eqref{eq:Lipschitz_for_mixture} is a mild regularity assumption on the conditional distribution of $Y$ given $X$ when their joint distribution has density $g(\cdot,\cdot)$. Many common bivariate density functions, like bivariate normal with finite mean and variance, satisfy this assumption. 
	
	%We further assume
	%\begin{equation} \label{eq:preserve_marginal}
	%\int\limits_{\mathbb R}\cde(x,y)\,dy = \mxd(x), \quad \mbox{and,} \quad \int\limits_{\mathbb R}\cde(x,y)\,dx = \myd(y).
	%\end{equation}
	
	To perform local power analysis under model~\eqref{eq:contam_final}, we adopt the same framework as in~\cite{shi2020power,Farlie1960,Stepanova2008}. Towards this direction, fix a sequence $\{r_n\}_{n\geq 1}$ with $r_n\in [0,1]$ for all $n\geq 1$ and consider the family of bivariate densities $\jtdt(\cdot,\cdot)$ as in~\eqref{eq:contam_final}. It is easy to check that the condition~\eqref{eq:nonreg}, i.e., $\xi(\jtdt)\to 0$ holds if $r_n\to 0$. In the same vein as in~\eqref{eq:revprob}, we consider the following testing problem:
	\begin{equation}
		\label{eq:alt_contam}
		\Hy_0: r_n=0 \quad \mbox{vs.} \quad \Hy_{1,n}:r_n>0.%,\ r_n\to 0.
	\end{equation}
	In view of \eqref{eq:nonreg} we focus on the shrinking alternative $r_n\to 0$.
	Our object of interest is the limiting power function, i.e.,
	$$\lim\limits_{n\to\infty} \bpjr , \quad \mbox{where}\ \lim_{n\to\infty} r_n=0.$$
	%It is not hard to verify that the sequence of alternatives $\Hy_{1,n}$ are a sequence of {\it shrinking alternatives} as defined in \ref{def:shrink_alt}. 
	Crucially,~\cref{th:power} reduces the above problem to characterizing the asymptotic behavior of $\xi(\jtdt)$. This is the subject of the following proposition.  
	
	\begin{prop}
		\label{pr:xi_mix}
		Let us consider $\xi(\jtdt)$ defined in \eqref{eq:popdef} for a sequence $\{r_n\}_{n\ge 1}$ such that $r_n\in[0,1]$ and $r_n\to 0$. Then we have
		\begin{equation*}
			\label{eq:asymp_power_contam1}
			\xi(\jtdt) = r^2_n \;\xi(g_{X,Y}).
		\end{equation*}
	\end{prop}
	
	From~\cref{pr:xi_mix} it is evident that $\sqrt{n}\xi(\jtdt)\to 0$ or $\infty$ accordingly as $n^{1/4}r_n\to 0$ or $\infty$. Further, $\xi(\jtdt)$ also increases with $\xi(g_{X,Y})$. Recall from \textbf{(P1)} that $\xi(\cde)$ is a measure of association between $X,Y$ jointly sampled according to $\cde(\cdot,\cdot)$. Therefore,~\cref{pr:xi_mix} shows that the power of $\pha$ is governed by the strength of association between $X,Y$ when they are jointly sampled from $\cde(\cdot,\cdot)$.
	
	We now present the complete characterization of the asymptotic power of $\pha$ for the problem~\eqref{eq:alt_contam}, which follows immediately from~\cref{pr:xi_mix} coupled with~\cref{th:power}.
	\begin{corollary}
		\label{cor:xi_mix}
		Consider the problem of testing $\Hy_0$ versus the sequence of alternatives $\Hy_{1,n}$ defined by \eqref{eq:alt_contam}, for a sequence $\{r_n\}_{n\ge 1}$ such that $r_n\in[0,1]$ and $r_n\to 0$. Suppose that~\eqref{eq:asnmix} holds and Assumptions~\eqref{eq:Lipschitz_for_mixture},~\eqref{eq:Lipmoment}~and~\eqref{eq:moment_for_mixture} are satisfied with $\frac{\theta-1}{\theta}\wedge \eta > \frac{\gamma+1}{2\gamma}$. Then the asymptotic power is given by
		\begin{equation}\label{eq:asymp_power_contam}
			\lim\limits_{n\to \infty}\bpjr=\begin{cases}\alpha &\text{ if }n^{1/4} r_n \to 0
				\\1-\Phi(z_{\alpha}-c_0^2\xi(g_{X,Y})/\sqrt{2/5}) &\text{ if }n^{1/4}r_n\to c_0\in(0,\infty)
				\\1 &\text{ if }n^{1/4}r_n\to \infty.
			\end{cases}
		\end{equation}
	\end{corollary}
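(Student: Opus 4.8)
The plan is to obtain Corollary~\ref{cor:xi_mix} as an immediate consequence of Proposition~\ref{pr:xi_mix} and Theorem~\ref{th:power}, so essentially all the work lies in checking hypotheses and carrying out the rescaling bookkeeping. First I would verify that the sequence $\jtdt$ satisfies~\eqref{eq:nonreg}: by property~\textbf{(P1)}, $\xi(g_{X,Y})\in[0,1]$ is a fixed finite constant, so Proposition~\ref{pr:xi_mix} (applicable since the moment-matching conditions~\eqref{eq:asnmix} are assumed) gives $\xi(\jtdt)=r_n^2\,\xi(g_{X,Y})\to 0$ as $r_n\to 0$. The remaining hypotheses of Theorem~\ref{th:power} --- Assumptions~\ref{as:Lipschitz} and~\ref{as:nn_dists} together with $\tfrac{\theta-1}{\theta}\wedge\eta>\tfrac{\gamma+1}{2\gamma}$ --- are assumed outright, so Theorem~\ref{th:power} applies to the triangular array drawn from $\jtdt$.

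Next I would substitute $\xi(\jtdt)=r_n^2\,\xi(g_{X,Y})$ into the characterization~\eqref{eq:asymp_power}. The only computation that matters is $\sqrt{n}\,\xi(\jtdt)=\sqrt{n}\,r_n^2\,\xi(g_{X,Y})=(n^{1/4}r_n)^2\,\xi(g_{X,Y})$, which converts the three regimes of~\eqref{eq:asymp_power} (driven by $\sqrt{n}\xi(\jtdt)$ converging to $0$, to a positive constant, or to $\infty$) into the regimes $n^{1/4}r_n\to 0$, $n^{1/4}r_n\to c_0\in(0,\infty)$, and $n^{1/4}r_n\to\infty$. In the critical regime $\sqrt{n}\xi(\jtdt)\to c_0^2\,\xi(g_{X,Y})$, and this is exactly the constant entering the limiting local power $1-\Phi\!\big(z_\alpha-c_0^2\xi(g_{X,Y})/\sqrt{2/5}\big)$; I would flag that the ``$c_0$'' appearing inside Theorem~\ref{th:power} is played here by $c_0^2\xi(g_{X,Y})$, not by $c_0$ itself. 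For the super-critical regime I would invoke that $g_{X,Y}$ does not factor into its marginals, which by~\textbf{(P1)} forces $\xi(g_{X,Y})>0$, so $(n^{1/4}r_n)^2\xi(g_{X,Y})\to\infty$ and the power tends to $1$.

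There is no genuine obstacle in the corollary itself --- it is bookkeeping --- so the one substantive ingredient is Proposition~\ref{pr:xi_mix}, which I would establish by a short direct computation. Under~\eqref{eq:asnmix} the marginals of $\jtdt$ are exactly $f_X$ and $f_Y$, so $\P(Y\le t)=F_Y(t)$ under $\jtdt$, while the conditional CDF is $\P(Y\le t\mid X=x)=(1-r_n)F_Y(t)+r_n\,G_{Y\mid X}(t\mid x)$ with $G_{Y\mid X}$ the conditional CDF under $g_{X,Y}$. Subtracting yields $\P(Y\le t\mid X)-\P(Y\le t)=r_n\big(G_{Y\mid X}(t\mid X)-F_Y(t)\big)$; squaring, taking expectation over $X\sim f_X$, integrating against $f_Y(t)\,dt$, and multiplying by $6$ pulls the factor $r_n^2$ out of the definition~\eqref{eq:popdef} and leaves exactly $\xi(g_{X,Y})$ (using once more that under $g_{X,Y}$ the $X$- and $Y$-marginals are $f_X$ and $f_Y$). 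The only point I would keep re-checking throughout is the strict positivity $\xi(g_{X,Y})>0$, which is what makes $\Hy_{1,n}$ a bona fide alternative for $r_n>0$ and what drives the divergence of $\sqrt{n}\xi(\jtdt)$ in the third case.
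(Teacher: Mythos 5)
Your proposal is correct and follows the paper's own route: the corollary is obtained by plugging Proposition~\ref{pr:xi_mix} into Theorem~\ref{th:power} and noting $\sqrt{n}\,\xi(\jtdt)=(n^{1/4}r_n)^2\xi(\cde)$, with $\xi(\cde)>0$ guaranteed by \textbf{(P1)} since $\cde$ does not factor. Your derivation of Proposition~\ref{pr:xi_mix} (subtract the marginal CDF from the conditional CDF first, pull out $r_n$, then square) is just a slightly streamlined rearrangement of the paper's expansion of $\int\E[F^2_{Y|X;r_n}]f_Y$, not a different argument.
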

	
	\begin{remark}\label{rem:gen}
		In~\cref{cor:xi_mix}, the assumption~\eqref{eq:asnmix} can be dropped. This will not change the detection threshold, but would alter the expression of the local power when $n^{1/4} r_n\to c_0$ to a more complicated and less interpretable expression. We refrain from presenting that version to facilitate easier understanding.
	\end{remark}
	
	In Figures~\ref{fig:pwmix}~and~\ref{fig:hist}, we provide a numerical illustration of~\cref{cor:xi_mix}. We generate data using the model in \cref{sec:appmix} with $n=4000$, $f_X\equiv \mathcal{N}(0,1)$, $f_Y\equiv \mathcal{N}(0,1)$,  $g_{X,Y}\equiv \mathcal{N}(\mathbf{0},\boldsymbol{\Sigma})$ where $$\boldsymbol{\Sigma}=\begin{pmatrix}
		1 &0.95 \\
		0.95 &1
	\end{pmatrix},
	$$
	and the mixing probability $r_n=n^{-b}$, $b\in [0,0.5]$. The power of the test, when averaged over $10000$ runs, is plotted in Figure \ref{fig:pwmix} as $b$ varies in $[0,0.5]$. Here $\beta_*$ is a Monte Carlo estimate (with $10000$ replications) of the limiting local power under model \eqref{eq:contam_final} with $r_n=n^{-0.25}$ and $\boldsymbol{\Sigma}$ as specified above; $\beta_*\approx 0.295$.~\cref{fig:pwmix} clearly shows that the power decays sharply from $1$ to $0$ as $b$ varies between $0.2$ and $0.3$. In fact, when $b$ is close to $0.25$, the empirical power is very close to the theoretical power $\beta^*$. A similar agreement between the empirical and the theoretical distributions is also observed in~\cref{fig:hist} where we plot the histogram of $\sqrt{n}(\xi_n-r_n^2\xi(g_{X,Y}))/\sqrt{2/5}$ under $r_n=n^{-0.25}$ and overlay it with the standard normal density curve, thus verifying \eqref{eq:defbn3} and \cref{pr:xi_mix}.
	
	\begin{figure}
		\centering
		\includegraphics[width = 8cm]{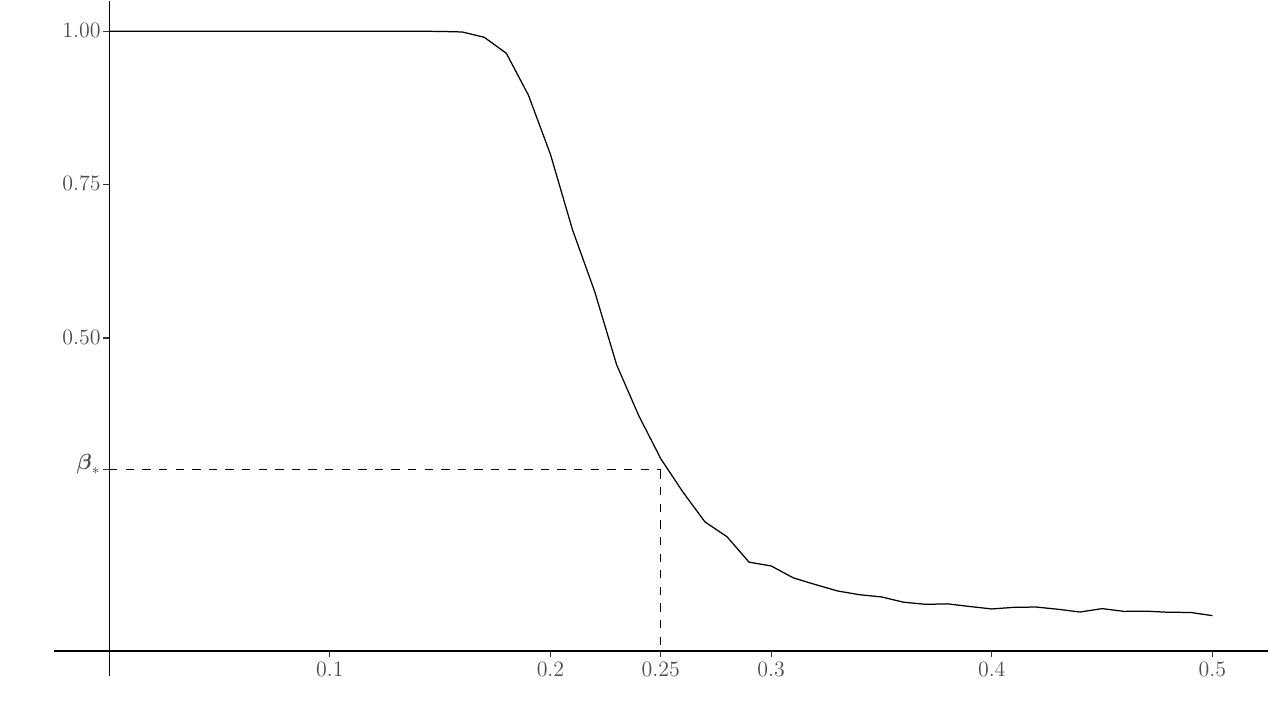}
		\caption{Power of $\phi_n$ for \eqref{eq:alt_contam} at different values of $b$.}\label{fig:pwmix}
	\end{figure}

	\begin{figure}
		\centering
		\includegraphics[width = 8cm]{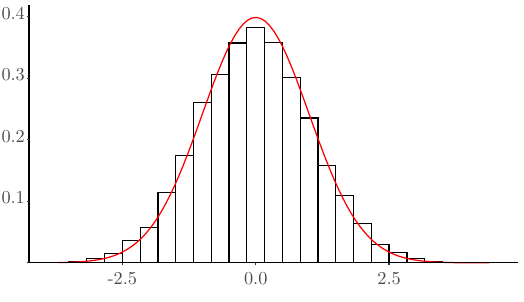}
		\caption{A histogram of $\sqrt{n}(\xi_n-r_n^2\xi(g_{X,Y}))/\sqrt{2/5}$ for $r_n=n^{-0.25}$, with the standard normal density curve overlaid in red.}\label{fig:hist}
	\end{figure}
	
	\subsection{Regression model}\label{sec:noisereg}
	In our main motivating paper~\cite{sc_corr}, the author considered the following model in numerical experiments: 
	\begin{equation}
		\label{eq:reg_mod}
		Y= g(X)+\sigma Z,
	\end{equation}
	where $\sigma \ge 1$, $g:\mathbb{R} \rightarrow \mathbb{R}$, $\mathbb{E}[|g(X)|^3]<\infty$, $Z \sim N(0,1)$ and $X,Z$ are independent. We also assume that $X$ has a finite $\gamma$-th moment for some $\gamma > 3$ and $g$ satisfies
	\[
	|g(x_1)-g(x_2)| \le (1+|x_1|+|x_2|)|x_2-x_1|.
	\]
	This is the classical noisy nonparametric regression model. Note that, as $\sigma \rightarrow \infty$, the ``noise" part of the model~\eqref{eq:reg_mod} dominates the ``signal" part given by $g(X)$. Therefore, as $\sigma\to\infty$, the independence between the ``noise" and the ``signal" makes it harder and harder for the independence testing procedures to have a high power. This makes it interesting to study the performance of $\pha$ in the context of model~\eqref{eq:reg_mod} and to obtain detection thresholds in terms of $\sigma$. 
	
	Therefore, we consider the natural parametric model with $\jtds(\cdot,\cdot)$ being the joint density of $(X,Y)$ drawn according to the model~\eqref{eq:reg_mod} with $\sigma\equiv \sigma_n$. It is easy to check that $\xi(\jtds)\to 0$ as $\sigma_n\to\infty$. Consequently, in the same spirit as in the previous section, we are interested in the following limiting power function:
	%Towards this direction, first reparametrize model~\eqref{eq:reg_mod} by setting $\tau = 1/\sigma$. It is not hard to see that if $h_{X,Y,\tau}$ be the joint distribution of $(X,Y)$, then $\xi(h_{X,Y,\tau})\to 0$ as $\tau\to 0$. 
	%In light of the above statement, let us consider a sequence of shrinking alternatives,
	\begin{equation}
		\label{eq:alt_reg}
		\lim\limits_{n\to\infty} \bpjs , \quad \mbox{where}\ \lim_{n\to\infty} \sigma_n=\infty.
	\end{equation}
	As in~\cref{sec:appmix}, we first state a proposition  characterizing the asymptotics of $\xi(\jtds)$ as $n \rightarrow \infty$.
	\begin{prop}
		\label{pr:xi_reg}
		Consider the model~\eqref{eq:reg_mod} with $\sigma\equiv \sigma_n$. We then have the following identity:
		\[
		\xi(\jtds) = {\sqrt{3}\over \pi} \sigma_n^{-2}\mathrm{Var}(g(X)) + o(\sigma_n^{-2}).
		\]
	\end{prop}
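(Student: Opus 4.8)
The plan is to evaluate the integral \eqref{eq:popdef} defining $\xi(\jtds)$ directly, by a first--order Taylor expansion in $\sigma_n^{-1}$. Under model \eqref{eq:reg_mod} one has $Y\mid X=x\sim\Normal(g(x),\sigma_n^2)$, so $\P(Y\le t\mid X=x)=\Phi\bigl((t-g(x))/\sigma_n\bigr)$, $\P(Y\le t)=\E_X\Phi\bigl((t-g(X))/\sigma_n\bigr)$ and $\myd(t)=\sigma_n^{-1}\E_X\phi\bigl((t-g(X))/\sigma_n\bigr)$, whence $\E[\P(Y\le t\mid X)-\P(Y\le t)]^2=\mathrm{Var}_X\bigl(\Phi((t-g(X))/\sigma_n)\bigr)$, the variance being over $X$. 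The first move is the change of variables $t=\sigma_n u$: writing $W:=Z+g(X)/\sigma_n$ so that $Y=\sigma_n W$, and letting $f_W$ denote the density of $W$, one gets $\myd(t)\,dt=f_W(u)\,du$ and hence
\[
\xi(\jtds)=6\int \mathrm{Var}_X\bigl(\Phi(u-g(X)/\sigma_n)\bigr)\,f_W(u)\,du .
\]
Here $f_W$ depends on $n$, but $f_W(u)=\E_X\phi(u-g(X)/\sigma_n)\to\phi(u)$ pointwise as $\sigma_n\to\infty$ (by dominated convergence, since $g(X)/\sigma_n\to0$ a.s.), with the $n$--free bound $0\le f_W\le(2\pi)^{-1/2}$.

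Next I would Taylor--expand $u\mapsto\Phi(u-g(x)/\sigma_n)$ about $u$, using $\Phi'=\phi$ and $\|\Phi''\|_\infty<\infty$, in the form
\[
\Phi(u-g(x)/\sigma_n)=\Phi(u)-\phi(u)\,\frac{g(x)}{\sigma_n}+\rho_x(u),\qquad |\rho_x(u)|\le\tfrac12\|\Phi''\|_\infty\,\frac{g(x)^2}{\sigma_n^2}\ \ \text{and}\ \ |\rho_x(u)|\le\sqrt{2/\pi}\,\frac{|g(x)|}{\sigma_n},
\]
so that $\rho_x(u)^2\le C\,|g(x)|^3/\sigma_n^3$ with $C$ free of $u,n$. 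Since the constant $\Phi(u)$ cancels in $\mathrm{Var}_X$, expanding the variance of the two remaining summands yields the leading contribution $\phi(u)^2\,\mathrm{Var}(g(X))/\sigma_n^2$ from the linear term; the cross term is bounded by $\sqrt{\mathrm{Var}_X(\phi(u)g(X)/\sigma_n)}\cdot\sqrt{\E_X\rho_X(u)^2}=O\bigl(\phi(u)\,\sigma_n^{-5/2}\bigr)$, and $\mathrm{Var}_X(\rho_X(u))\le\E_X\rho_X(u)^2\le C\,\E|g(X)|^3\,\sigma_n^{-3}$. Thus
\[
\mathrm{Var}_X\bigl(\Phi(u-g(X)/\sigma_n)\bigr)=\phi(u)^2\,\frac{\mathrm{Var}(g(X))}{\sigma_n^2}+O\bigl((\phi(u)+1)\,\sigma_n^{-5/2}\bigr),
\]
with an absolute implied constant; the hypothesis $\E|g(X)|^3<\infty$ is exactly what makes this error $o(\sigma_n^{-2})$ after integration, and the coefficient $\phi(u)+1$ is integrable against $f_W$ uniformly in $n$.

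Finally, integrating the last display against $f_W(u)\,du$ (a probability measure), the error term contributes $O(\sigma_n^{-5/2})=o(\sigma_n^{-2})$, while for the main term dominated convergence ($0\le\phi(u)^2f_W(u)\le(2\pi)^{-1/2}\phi(u)^2\in L^1$ and $f_W\to\phi$) gives $\int\phi(u)^2f_W(u)\,du\to\int\phi(u)^3\,du$. A direct Gaussian computation yields $\int\phi(u)^3\,du=(2\pi\sqrt3)^{-1}$, hence
\[
\xi(\jtds)=6\cdot\frac{\mathrm{Var}(g(X))}{\sigma_n^2}\Bigl(\frac{1}{2\pi\sqrt3}+o(1)\Bigr)+o(\sigma_n^{-2})=\frac{\sqrt3}{\pi}\,\sigma_n^{-2}\,\mathrm{Var}(g(X))+o(\sigma_n^{-2}),
\]
which is the claimed identity. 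I expect the only genuinely delicate point to be making the Taylor--remainder bounds uniform enough in $u$ to survive integration against the $n$--dependent density $f_W$, i.e.\ exhibiting a single $n$--free integrable dominating function for every error term; this is precisely where the boundedness of $\Phi'$ and $\Phi''$ combines with the cubic moment bound $\E|g(X)|^3<\infty$ (through the interpolated estimate $\rho_x(u)^2\le C|g(x)|^3/\sigma_n^3$). The change of variables, the variance bookkeeping, and the evaluation of $\int\phi^3$ are otherwise routine.
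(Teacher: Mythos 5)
Your proof is correct, and it reaches the stated constant by a bookkeeping that differs from the paper's. The paper writes $\xi(\jtds)=-2+6\,\E_{X',Z'}\E_X\bigl[\Phi^2\bigl(Z'+(g(X')-g(X))/\sigma_n\bigr)\bigr]$ using an independent copy $(X',Z')$, Taylor-expands $\Phi^2$ to \emph{second} order in $\sigma_n^{-1}(g(x')-g(x))$, kills the first-order term by the symmetry $\E[g(X')-g(X)]=0$, and evaluates the Gaussian integral $\E_{Z}[\phi^2(Z)+\Phi(Z)\phi'(Z)]=\tfrac{1}{4\sqrt3\pi}$ together with $\E_{X,X'}(g(X)-g(X'))^2=2\Var(g(X))$. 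You instead keep the representation as $6\int\Var_X\bigl(\Phi((t-g(X))/\sigma_n)\bigr)\myd(t)\,dt$, rescale $t=\sigma_n u$, and expand $\Phi$ itself to \emph{first} order, so that the square of the linear term directly produces $\phi(u)^2\Var(g(X))/\sigma_n^2$ and the Gaussian integral becomes $\int\phi^3=\tfrac{1}{2\pi\sqrt3}$; the two constants agree since $6\cdot 2\cdot\tfrac{1}{4\sqrt3\pi}=6\cdot\tfrac{1}{2\pi\sqrt3}=\tfrac{\sqrt3}{\pi}$. What your route buys is a more transparent treatment of the two places where $n$ enters: the interpolated remainder bound $\rho_x(u)^2\lesssim|g(x)|^3/\sigma_n^3$ makes explicit exactly how the hypothesis $\E|g(X)|^3<\infty$ is consumed (the paper's $O(\sigma_n^{-3}|g(x')-g(x)|^3)$ remainder uses it more tersely), and your change of variables plus dominated convergence for $f_W\to\phi$ handles the $n$-dependence of the marginal law of $Y$, which the paper absorbs by parameterizing the outer integration through $(X',Z')$. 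The paper's version is shorter because the symmetrization disposes of the linear term for free; yours is slightly more self-contained on uniformity of the error in the integration variable.
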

	%(see Section A.2.2 of the Supplementary Material~\cite{ADN21-supp})
	~\cref{pr:xi_reg} shows that $\sqrt{n}\xi(\jtds)\to 0$ or $\infty$ accordingly as $n^{-1/4}\sigma_n\to \infty$ or $0$. Also $\xi(\jtds)$ increases with $\mbox{Var}(g(X))$. This is indeed very intuitive. Note that if $\mbox{Var}(g(X))=0$ then $g(\cdot)$ is a constant function which means $X,Y$ are independent according to model~\eqref{eq:reg_mod}. Also note that $\mbox{Var}(\E(Y|X))=\mbox{Var}(g(X))$ and $\E\mbox{Var}(Y|X)=\sigma^2_n$, i.e., $\mbox{Var}(g(X))$ measures the proportion of the total variance of $Y$ which is explained by $X$. Therefore, it is only natural that the larger the value of $\mbox{Var}(g(X))$, the larger is the power of $\pha$.
	
	We now present the complete answer to problem~\eqref{eq:alt_reg}, which follows immediately from~\cref{pr:xi_reg} coupled with~\cref{th:power}.
	\begin{corollary}
		\label{cor:xi_reg}
		Consider the problem in~\eqref{eq:reg_mod}. Then the asymptotic power is given by
		\begin{equation}\label{eq:asymp_power_reg}
			\lim\limits_{n\to \infty}\bpjs=\begin{cases}\alpha &\text{ if }n^{-1/4}\sigma_n \to \infty
				\\1-\Phi(z_{\alpha}-c_0^2\sqrt{3}/\pi \,\mathrm{Var}(g(X))/\sqrt{2/5}) &\text{ if }n^{-1/4}\sigma_n\to c_0^{-1}\in(0,\infty)
				\\1 &\text{ if }n^{-1/4}\sigma_n\to 0.
			\end{cases}
		\end{equation}
	\end{corollary}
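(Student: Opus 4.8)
The plan is to derive \cref{cor:xi_reg} directly from \cref{pr:xi_reg} and \cref{th:power}; after invoking these two results, all that remains is to convert the three regimes for $n^{-1/4}\sigma_n$ into the regimes for $\sqrt{n}\,\xi(\jtds)$ that drive \cref{th:power}. First I would check that \cref{th:power} applies to the family $\jtds(\cdot,\cdot)$: the moment condition $\E|g(X)|^3<\infty$ gives $\mathrm{Var}(g(X))<\infty$, so \cref{pr:xi_reg} shows $\xi(\jtds)=\tfrac{\sqrt{3}}{\pi}\sigma_n^{-2}\mathrm{Var}(g(X))+o(\sigma_n^{-2})\to 0$ as $\sigma_n\to\infty$, which is exactly \eqref{eq:nonreg}; and Assumptions~\ref{as:Lipschitz}, \ref{as:nn_dists} with $\tfrac{\theta-1}{\theta}\wedge\eta>\tfrac{\gamma+1}{2\gamma}$ are hypotheses of the corollary. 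Hence \cref{th:power} gives
\[
\bpjs=1-\Phi\!\left(z_\alpha-\sqrt{n}\,\xi(\jtds)/\sqrt{2/5}\right)+o(1).
\]

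Next I would substitute the expansion from \cref{pr:xi_reg} and use the identity $\sqrt{n}\,\sigma_n^{-2}=(n^{-1/4}\sigma_n)^{-2}$ to obtain
\[
\sqrt{n}\,\xi(\jtds)=\frac{\sqrt{3}}{\pi}\,\mathrm{Var}(g(X))\,(n^{-1/4}\sigma_n)^{-2}\,(1+o(1)),
\]
the correction $\sqrt{n}\cdot o(\sigma_n^{-2})$ being $o(\sqrt{n}\,\sigma_n^{-2})$, hence $o(1)$ on the boundary and negligible otherwise. Reading off the three cases: $n^{-1/4}\sigma_n\to\infty$ forces $\sqrt{n}\,\xi(\jtds)\to 0$; $n^{-1/4}\sigma_n\to c_0^{-1}\in(0,\infty)$ forces $\sqrt{n}\,\xi(\jtds)\to c_0^2\tfrac{\sqrt{3}}{\pi}\mathrm{Var}(g(X))$; and $n^{-1/4}\sigma_n\to 0$ forces $\sqrt{n}\,\xi(\jtds)\to\infty$. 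Inserting each limit into the displayed power formula, using continuity of $\Phi$, $1-\Phi(z_\alpha)=\alpha$, and $z_\alpha-\sqrt{n}\,\xi(\jtds)/\sqrt{2/5}\to-\infty$ in the last case, yields exactly \eqref{eq:asymp_power_reg}. (Implicitly $\mathrm{Var}(g(X))>0$; if $g$ is a.s.\ constant the alternative is degenerate, $\xi(\jtds)\equiv 0$, and the power is $\alpha$ throughout.)

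The corollary itself presents no real obstacle once \cref{th:power} is in hand; it is essentially bookkeeping. The genuine content is \cref{pr:xi_reg}, whose proof I would carry out as follows: write $\P(Y\le t\mid X=x)=\Phi\!\left((t-g(x))/\sigma_n\right)$ and Taylor-expand in powers of $\sigma_n^{-1}$; after the rescaling $t=\sigma_n s$ (under which the law of $Y/\sigma_n$ converges weakly to the standard normal, since $Y/\sigma_n\to Z$) and dominated convergence one obtains
\[
\xi(\jtds)=6\,\sigma_n^{-2}\,\mathrm{Var}(g(X))\int\phi(s)^3\,ds+o(\sigma_n^{-2})=\frac{\sqrt{3}}{\pi}\,\sigma_n^{-2}\,\mathrm{Var}(g(X))+o(\sigma_n^{-2}),
\]
using $6\int\phi(s)^3\,ds=\sqrt{3}/\pi$. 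The step needing care there is a Taylor-remainder bound uniform in $t$ and integrable against the moving density $\F$, which is where the hypothesis $\E|g(X)|^3<\infty$ enters.
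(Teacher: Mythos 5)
Your proposal is correct and matches the paper's route exactly: the paper derives \cref{cor:xi_reg} by combining \cref{pr:xi_reg} with \cref{th:power} and translating the three regimes of $n^{-1/4}\sigma_n$ into regimes of $\sqrt{n}\,\xi(\jtds)$, which is precisely your bookkeeping (your remark on the degenerate case $\mathrm{Var}(g(X))=0$ is a small point the paper leaves implicit). Your supplementary sketch of \cref{pr:xi_reg} is also consistent with the paper's Taylor-expansion proof, yielding the same constant $\sqrt{3}/\pi$.
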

	
	\subsection{Rotation alternatives}\label{sec:rot-alt}
	As a third example, we now consider the pair of random variables $(X,Y)$ satisfying
	\begin{equation}
		\label{eq:rotation}
		\begin{pmatrix}X\\Y\end{pmatrix} = \begin{pmatrix}1 & \Delta\\\Delta & 1\end{pmatrix}\begin{pmatrix}U\\V\end{pmatrix},
	\end{equation}
	where $U,V$ are independent, zero mean random variables with densities $f_1$ and $f_2$. We further assume that $f_1$ and $f_2$ are twice differentiable with the $i$-th derivative, $i \in \{0,1,2\}$ being denoted by $f^{(i)}_1$ and $f^{(i)}_2$ respectively. The $0$-th derivative is the function itself. Note that $X$ and $Y$, drawn according to model~\eqref{eq:rotation}, are independent if and only if $\Delta=0$.
	
	To perform a local power analysis, we adopt the same framework as in~\cite{gieser1993,Konijn1956,shi2020power}. Consider $(X,Y)\sim \jtdd(\cdot,\cdot)$ as in~\eqref{eq:rotation} with $\Delta\equiv\Delta_n$. It is easy to check that $\xi(\jtdd)\to 0$ holds if $\Delta_n\to 0$. In the same vein of the earlier examples, we are interested in studying the following limiting power function
	\begin{equation}
		\label{eq:alt_rot}
		\lim\limits_{n\to\infty} \bpjd , \quad \mbox{where}\ \lim_{n\to\infty} \Dl_n=0.
	\end{equation}
	
	Before stating the main results of this section, we need some assumptions which are encapsulated below. Note that the first two assumptions are also required for the our main results in Section~\ref{sec:main_results}.
	
	\begin{Assumption}\label{as:Lipschitz} There exist functions $L_i^{(n)}(\cdot):\R\to [0,\infty)$ for $i=1,2$ and numerical constants $\kappa_1>0$, $\eta\in (0,1]$, and $\theta>1$ such that $\forall\, y,x_1,x_2,$
		\begin{equation}\label{eq:Lipschitz1}
			\,\,\abs*{\P(Y_{n,1}\geq y|X_{n,1}=x_1)-\P(Y_{n,1}\geq y|X_{n,1}=x_2)}\leq (1+L_1^{(n)}(x_1,y)+L_2^{(n)}(x_2,y))\abs*{x_1-x_2}^{\eta},
		\end{equation}
		\begin{equation}\label{eq:Lipschitz2}
			\limsup\limits_{n\to\infty} \int (L_i^{(n)}(x,y))^{\theta}\mxd^{(n)}(x)\myd^{(n)}(y)\,dx\, dy \leq \kappa_1,
		\end{equation}
		where $\mxd^{(n)}(\cdot)$ and $\myd^{(n)}(\cdot)$ are the marginal densities of $X_{n,1}$ and $Y_{n,1}$ under the joint density $\jtd^{(n)}(\cdot,\cdot)$.	
	\end{Assumption}
	\begin{Assumption}\label{as:nn_dists} There exist numerical constants $\gamma>1$ and $\kappa_2>0$ such that
		$$\limsup\limits_{n\to\infty} \E|X_{n,1}|^{\gamma}\leq \kappa_2.$$
	\end{Assumption}
	
	Further, we make the following  additional assumption for the rotation alternative.
	
	\begin{Assumption} \label{as:rotation}\hfill 
		\begin{itemize}
			\item[(1)] $\EE U=\EE V=0$ and $\EE U^2=\EE V^2=1$.
			\item[(2)] Both $f_1$ and $f_2$ are continuous and twice differentiable. As $t \rightarrow \infty$,
			\[
			|tf^\prime_1(t)| \rightarrow 0,  \quad \quad \mbox{and} \quad \quad |tf^\prime_2(t)| \rightarrow 0.   
			\]
			\item[(3)] There exist an $\epsilon>0$, and real-valued functions $R_{1,\epsilon}(\cdot,\cdot)$ and $R_{2,\epsilon}(\cdot,\cdot)$ such that
			$$\max_{k\in\{0,1,2\}}\sup_{|\Dl|\leq\epsilon} \bigg|\frac{\partial^k}{\partial\Delta^k}f_i\left(\frac{u-\Dl v}{1-\Dl^2}\right)\bigg|\leq R_{i,\epsilon}(u,v)$$
			for all $u,v$ and $i=1,2$. Further, for all 
			$ \ell \in \{0,1,2,3\}$,
			\[
			\mathbb{E}\left[\frac{R_{1,\epsilon}(U,V)}{f_1(U)}|U|^\ell\right]^2 <\infty, \quad\quad \mathbb{E}\left[\left(R_{2,\epsilon}(U,V)\right)|V|^\ell\right]^2 < \infty.
			\]
		\end{itemize}
	\end{Assumption}
	These assumptions are natural and hold in various commonly used models including the normal distribution, the $t$ distribution with sufficiently high degrees of freedom (10 or more) and various other distributions in the exponential family. Under these assumptions we have the following proposition which is proved in Section B.2.3.
	\begin{prop}
		\label{pr:xi_rot}
		Consider the model defined by \eqref{eq:rotation} with $\Dl\equiv \Dl_n$. Then, under the \cref{as:rotation}, we have
		\begin{equation}
			\xi(\jtdd) = \Delta_n^2V_0 + O(\Delta_n^3),
		\end{equation}
		where $$V_0:=6\Bigg(\mathbb{E}_V[f^2_2(V)]+\mathbb{E}_U\Big[\frac{f^\prime_1(U)}{f_1(U)}\Big]^2\mathbb{E}_V[J^2(V)]-2\mathbb{E}_V[J(V)f_2(V)]\Bigg)
		$$
		and $
		J(t):=\int\limits_{-\infty}^{t}y_2f_2(y_2)\,dy_2.
		$
	\end{prop}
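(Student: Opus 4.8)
The plan is to obtain $\xi(\jtdd)$ by a second order Taylor expansion in $\Delta_n$ around $\Delta_n=0$, exploiting the fact that at $\Delta=0$ the pair $(X,Y)$ is exactly $(U,V)$ and hence independent, so that both the $O(1)$ and the $O(\Delta)$ contributions to $\xi$ vanish and the leading term is $\Delta_n^2$ times an explicit integral. First I would record the joint density: the map $(u,v)\mapsto(u+\Delta v,\Delta u+v)$ has Jacobian $1-\Delta^2$ and inverse $u=(x-\Delta y)/(1-\Delta^2)$, $v=(y-\Delta x)/(1-\Delta^2)$, so
$$f_{X,Y,\Delta}(x,y)=\frac{1}{1-\Delta^2}\,f_1\!\Big(\frac{x-\Delta y}{1-\Delta^2}\Big)f_2\!\Big(\frac{y-\Delta x}{1-\Delta^2}\Big).$$
Writing $F_2$ for the c.d.f.\ of $V$ and using $\P(Y\le t)=\mathbb{E}_X[\P(Y\le t\mid X)]$, the definition \eqref{eq:popdef} becomes $\xi(\jtdd)=6\int \mathrm{Var}_{X}\big(G_\Delta(X,t)\big)\,f_{Y}^{(\Delta)}(t)\,dt$, where $G_\Delta(x,t):=\P(Y\le t\mid X=x)$ and $f_Y^{(\Delta)}$ is the marginal of $Y$.

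Next I would expand $G_\Delta(x,t)$ to first order in $\Delta$. Differentiating $f_{X,Y,\Delta}$ at $\Delta=0$ (the $\Delta$-derivatives of $(x-\Delta y)/(1-\Delta^2)$, $(y-\Delta x)/(1-\Delta^2)$ and $1/(1-\Delta^2)$ being $-y$, $-x$, $0$) gives $\partial_\Delta f_{X,Y,\Delta}(x,y)\big|_{0}=-y f_1'(x)f_2(y)-x f_1(x)f_2'(y)$. Integrating $y$ over $(-\infty,t]$ and over $\mathbb{R}$, using $\int yf_2(y)\,dy=\EE V=0$ and $f_2(\pm\infty)=0$, one finds that the denominator $f_X^{(\Delta)}(x)$ has vanishing $\Delta$-derivative at $0$, while the numerator $\int_{-\infty}^t f_{X,Y,\Delta}(x,y)\,dy$ has derivative $-f_1'(x)J(t)-xf_1(x)f_2(t)$. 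Hence, by the quotient rule,
$$\partial_\Delta G_\Delta(x,t)\big|_{0}= -\frac{f_1'(x)}{f_1(x)}J(t)-x\,f_2(t)=:G_1(x,t),$$
so $G_\Delta(x,t)=F_2(t)+\Delta\,G_1(x,t)+O(\Delta^2)$ uniformly on $\{|\Delta|\le\epsilon\}$.

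Plugging this in, since $F_2(t)$ is $x$-free we get $\mathrm{Var}_X(G_\Delta(X,t))=\Delta^2\,\mathrm{Var}_X(G_1(X,t))+O(\Delta^3)$; and because $X$ has density $f_1+O(\Delta)$ and $f_Y^{(\Delta)}(t)=f_2(t)+O(\Delta)$, this yields $\xi(\jtdd)=6\Delta_n^2\int \mathrm{Var}_{U}\big(G_1(U,t)\big)f_2(t)\,dt+O(\Delta_n^3)$ with $U\sim f_1$. Finally I would evaluate the inner variance: $\mathrm{Var}_U(G_1(U,t))=J(t)^2\,\mathbb{E}_U\big[(f_1'(U)/f_1(U))^2\big]+f_2(t)^2\,\mathrm{Var}(U)+2J(t)f_2(t)\,\mathrm{Cov}\big(f_1'(U)/f_1(U),U\big)$, using $\mathbb{E}[f_1'(U)/f_1(U)]=\int f_1'=0$, $\mathrm{Var}(U)=1$, and $\mathrm{Cov}(f_1'(U)/f_1(U),U)=\int u f_1'(u)\,du=-1$ by integration by parts (the boundary term discarded via the tail conditions in \cref{as:rotation}). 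Integrating term by term, with $\int J(t)^2 f_2(t)\,dt=\mathbb{E}_V[J(V)^2]$, $\int f_2(t)^3\,dt=\mathbb{E}_V[f_2(V)^2]$, and $\int J(t) f_2(t)^2\,dt=\mathbb{E}_V[J(V)f_2(V)]$, gives exactly $\xi(\jtdd)=\Delta_n^2 V_0+O(\Delta_n^3)$ with $V_0$ as in the statement.

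The main obstacle is not the formal expansion but making it rigorous: justifying differentiation under the integral sign in $\Delta$ — both inside the numerator and denominator of $G_\Delta$ and inside the outer $t$-integral defining $\xi$ — and verifying that the remainders are genuinely $O(\Delta_n^3)$, uniformly. This is precisely what \cref{as:rotation}(3) is designed for: it dominates $\partial^k_\Delta f_i\big((u-\Delta v)/(1-\Delta^2)\big)$ for $k\le 2$ by $R_{i,\epsilon}$, and the stated second-moment bounds on $R_{1,\epsilon}(U,V)/f_1(U)\cdot|U|^\ell$ and $R_{2,\epsilon}(U,V)\cdot|V|^\ell$ are exactly the integrability needed once one passes through the quotient rule and multiplies by the polynomial factors $J(t)$ and $t$ arising in $G_1$. \cref{as:rotation}(4) is what kills the boundary terms in the integrations by parts. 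I expect the bookkeeping for the $O(\Delta_n^3)$ remainder — tracking how the second-order terms in $G_\Delta$, in $f_X^{(\Delta)}$, and in $f_Y^{(\Delta)}$ combine — to be the most delicate part, and I would handle it by a single uniform dominated-convergence estimate built on \cref{as:rotation}.
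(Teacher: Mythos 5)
Your proposal is correct and follows essentially the same route as the paper: a second-order Taylor expansion of the joint density in $\Delta$, yielding the first-order perturbation $-xf_2(t)-\tfrac{f_1'(x)}{f_1(x)}J(t)$ of the conditional CDF, whose squared $L^2(f_1\otimes f_2)$ norm (evaluated via $\E U^2=1$, $\int f_1'=0$, and $\int uf_1'(u)\,du=-1$ by parts under \cref{as:rotation}(4)) gives $V_0$, with the remainder controlled by the dominating functions of \cref{as:rotation}(3) exactly as you indicate. The only cosmetic difference is that you organize the leading term as $\mathrm{Var}_U(G_1(U,t))$ while the paper writes the squared deviation directly; these coincide since the first-order term has mean zero in $x$.
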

	
	The complete answer to problem~\eqref{eq:alt_rot} now follows by combining~\cref{pr:xi_rot} with Theorem~\ref{th:power}, and is presented below.
	\begin{corollary}
		\label{cor:xi_rot}
		Suppose Assumptions~\ref{as:Lipschitz},~\ref{as:nn_dists},~and~\ref{as:rotation} hold with $\frac{\theta-1}{\theta}\wedge \eta > \frac{\gamma+1}{2\gamma}$. Then the asymptotic power in~\eqref{eq:alt_rot} is given by
		\begin{equation}\label{eq:asymp_power_rot}
			\lim\limits_{n\to \infty}\bpjd=\begin{cases}\alpha &\text{ if }n^{1/4}\Dl_n \to 0
				\\1-\Phi(z_{\alpha}-c_0^2\,V_0/\sqrt{2/5}) &\text{ if }n^{1/4}\Dl_n\to c_0\in(0,\infty)
				\\1 &\text{ if }n^{1/4}\Dl_n\to \infty,
			\end{cases}
		\end{equation}
		where $V_0$ is as defined in~\cref{pr:xi_rot}.
	\end{corollary}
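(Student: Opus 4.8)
The proof amounts to substituting the expansion of $\xi(\jtdd)$ supplied by \cref{pr:xi_rot} into the power formula of \cref{th:power}, so the plan is short. First I would check that \cref{th:power} is applicable along the sequence of densities $\jtdd$. Assumptions~\ref{as:Lipschitz} and~\ref{as:nn_dists}, as well as the rate condition $\frac{\theta-1}{\theta}\wedge\eta>\frac{\gamma+1}{2\gamma}$, are hypotheses of the corollary; and since the regime \eqref{eq:alt_rot} stipulates $\Dl_n\to 0$, \cref{pr:xi_rot} gives $\xi(\jtdd)=\Dl_n^2 V_0+O(\Dl_n^3)$, which tends to $0$, so \eqref{eq:nonreg} holds for this triangular array. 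Applying \cref{th:power} with $f^{(n)}_{X,Y}=\jtdd$ then yields
\[
\bpjd=1-\Phi\big(z_{\alpha}-\sqrt{n}\,\xi(\jtdd)/\sqrt{2/5}\big)+o(1).
\]

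It remains to take the limit of $\sqrt{n}\,\xi(\jtdd)$ in each of the three regimes of $n^{1/4}\Dl_n$. Write $a_n:=n^{1/4}\Dl_n$; then $\sqrt{n}\,\Dl_n^2=a_n^2$ and $\sqrt{n}\,|\Dl_n|^3=a_n^2\,|\Dl_n|$, so the expansion rearranges to $\sqrt{n}\,\xi(\jtdd)=a_n^2\big(V_0+O(|\Dl_n|)\big)$, and the $O(|\Dl_n|)$ term vanishes as $n\to\infty$ since $\Dl_n\to 0$. Hence $\sqrt{n}\,\xi(\jtdd)\to 0$ when $a_n\to 0$, $\sqrt{n}\,\xi(\jtdd)\to c_0^2 V_0$ when $a_n\to c_0\in(0,\infty)$, and $\sqrt{n}\,\xi(\jtdd)\to\infty$ when $a_n\to\infty$ (using $V_0>0$). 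Plugging these into the displayed power formula, and using continuity of $\Phi$ with $\Phi(+\infty)=1$, gives the three cases of \eqref{eq:asymp_power_rot}: the power tends to $1-\Phi(z_{\alpha})=\alpha$, to $1-\Phi(z_{\alpha}-c_0^2V_0/\sqrt{2/5})$, and to $1$, respectively.

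The only step needing a little care is the strict positivity $V_0>0$, which is what makes the detection threshold sharp at scale $n^{-1/4}$ (it is used in the $n^{1/4}\Dl_n\to\infty$ branch). Non-negativity $V_0\ge 0$ is automatic from $\xi(\jtdd)\ge 0$; strict positivity can either be verified directly from the closed form of $V_0$ in \cref{pr:xi_rot}, or deduced from the fact that $X,Y$ drawn from~\eqref{eq:rotation} are dependent whenever $\Dl_n\ne 0$, so $\xi(\jtdd)>0$, which is incompatible with $V_0=0$ given the $O(\Dl_n^3)$ control on the remainder. Beyond this, I do not expect any genuine obstacle in the corollary itself: the substantive work has already been carried out in \cref{pr:xi_rot} (the uniform second-order Taylor expansion of $\xi(\jtdd)$ in $\Dl$, with remainder dominated via the functions $R_{i,\epsilon}$ of \cref{as:rotation}) and in \cref{th:power}; all that remains is the bookkeeping above, whose one substantive ingredient is the observation $\sqrt{n}\,|\Dl_n|^3=a_n^2|\Dl_n|=o(a_n^2)$, showing that the cubic remainder is negligible relative to the $\Dl_n^2 V_0$ term in every regime.
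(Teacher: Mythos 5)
Your proposal is correct and follows exactly the route the paper takes: the paper states \cref{cor:xi_rot} as an immediate consequence of combining \cref{pr:xi_rot} with \cref{th:power}, and your computation $\sqrt{n}\,\xi(\jtdd)=a_n^2(V_0+O(|\Dl_n|))$ with $a_n=n^{1/4}\Dl_n$ is precisely the omitted bookkeeping. Your remark on needing $V_0>0$ for the $a_n\to\infty$ branch is a fair point that the paper leaves implicit, and your handling of it is adequate.
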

	
	%\begin{remark}
	%	It is not hard to check that the alternatives considered in this section do not fall into the standard contiguity framework even at the $n^{-1/2}$ scale, i.e., when $n^{-1/2}\sigma_n=c_0$. Therefore, for this class of models, the Le Cam's third lemma based techniques used in~\cite{cao2020correlations,shi2020power} no longer apply. In contrast, our results, for example,~\cref{th:power} which covers a significantly bigger class of alternatives, continue to apply.
	%\end{remark}
	
	\section{Minimax optimality of $\pha$ for testing degree of association}\label{sec:degtest}
	Recall our basic setting, that is, $(X_1,Y_1),\ldots , (X_n,Y_n)\overset{i.i.d.}{\sim}f_{X,Y}^{(n)}$. In the  earlier sections, we focused on the case where $\xj$, the Dette-Siburg-Stoimenov measure, \emph{converges to $0$} as $n\to\infty$. On the contrary, the focus of this section is on the other regime.
	\begin{equation}\label{eq:gencon}
		\xj\downarrow\xi_0>0\qquad \mbox{as} \ n\to\infty.
	\end{equation} 
	This regime is of particular importance because the value of $\xj$ provably encodes how noisy the functional relationship between $X$ and $Y$ is. In particular, in~\cite[Theorem 2]{dette2013copula}~and~\cite[Theorem 1.1]{sc_corr}, the authors show that $\xi_0=1$ implies $Y$ is a noiseless function of $X$. Further, in~\cite[Theorems 1 and 2]{dette2013copula}, the authors also show that $\xj$ can be used to define a natural notion of \emph{dependence ordering} based on how well $Y$ can be predicted from $X$. Through some explicit computations, in~\cite[Section 4, Examples 1.(a) --- (d)]{dette2013copula}, the authors further prove that in multiple copula based dependence models, $\xj$ is a \emph{strictly monotonic} function of the natural dependence parameters. Moving to~\cite{sc_corr}, the author uses \emph{extensive numerical experiments} to show that in a number of other examples, including variants of noisy nonparametric regression, \cite{sc_corr} also shows through extensive simulations, that $\xj$ decreases monotonically with increasing noise levels. Overall, these results show that through invertible transformations, one can directly convert $\xj$ to natural dependence parameters in a variety of models for $(X_1,Y_1)$. Therefore, drawing inference about $\xj$ immediately leads to interpretable inference about the nature of dependence in a large collection of models. We expand this collection of models further in the following proposition by showing that $\xj$ is  a monotonic function of natural dependence parameters in all examples from~\cref{sec:applications}. 
	
	\begin{prop}[Monotonicity of $\xj$]\label{prop:ximon}
		Recall the definition of $\xj$ from~\eqref{eq:popdef}.
		
		\begin{itemize}
			\item[(1)] Suppose $$(X_1,Y_1)\sim \mathcal{N}\left(\begin{pmatrix} \mu_1\\ \mu_2\end{pmatrix},\begin{pmatrix} \sigma_1^2 & \sigma_1\sigma_2\rho\\ \sigma_1\sigma_2\rho & \sigma_2^2\end{pmatrix}\right),$$
			for $\mu_1,\mu_2\in\R$, $\sigma_1,\sigma_2>0$, $|\rho|\leq 1$. 
			Then $\xj$ is free of $\mu_1,\mu_2,\sigma_1,\sigma_2$ and a strictly increasing function of $|\rho|$.
			\item[(2)] Suppose that $(X_1,Y_1)$ is distributed according to the mixture model in~\eqref{eq:contam_final} with $g_{X,Y}(\cdot)$ further satisfying~\eqref{eq:asnmix}. Then $\xj$ is a strictly increasing function of $r$.
			\item[(3)] Suppose $(X_1,Y_1)$ is distributed according to the regression model~\eqref{eq:reg_mod}, for $\sigma>0$. Then $\xj$ is a strictly decreasing function of $\sigma$.
		\end{itemize}
	\end{prop}
	Motivated by the observations made above, we focus on the following natural hypothesis testing problem in this section:
	\begin{equation}\label{eq:newhyp}
		\mathrm{H}_0: \xj=\xi_0\qquad \mbox{versus}\qquad \mathrm{H}_{1,n}: |\xj-\xi_0|\geq c_n,
	\end{equation}
	for some \emph{positive} sequence $\{c_n\}_{n\geq 1}$ and $\xi_0\in (0,1)$. Clearly if $c_n=O(1)$, then from~\cite[Theorem 1.1]{sc_corr}, Chatterjee's correlation $\xi_n$ can be used to consistently separate $\mathrm{H}_0$ from $\mathrm{H}_{1,n}$. On the other hand, our focus here is when $c_n=o(1)$ which leads to~\eqref{eq:gencon}. When $\xi_0\in (0,1)$, \eqref{eq:newhyp} can be viewed as a slightly general (two-sided) version of \eqref{eq:revprob}. The goal of this section is to address the following pair of subtle 
	questions:
	
	\begin{itemize}
		\item What is the fastest decaying sequence $c_n$ such that Chatterjee's correlation coefficient $\xi_n$ can still separate $\mathrm{H}_0$ from $\mathrm{H}_{1,n}$?
		\item Conversely, what is the slowest decaying sequence $c_n$ such that no test can separate $\mathrm{H}_0$ from $\mathrm{H}_{1,n}$ in the local asymptotic minimax sense?
	\end{itemize}
	
	We will prove in this section that, for Chatterjee's correlation, the detection boundary occurs at $c_n\gg n^{-1/2}$. However, in sharp contrast to the $\xi_0=0$ case, $c_n\approx n^{-1/2}$ is indeed the minimax optimal threshold, in that no test can consistently separate the two hypotheses when $c_n\approx n^{-1/2}$. This indicates that Chatterjee's correlation based test is indeed \emph{minimax optimal} for testing the degree of association when the two variables are not exactly independent. In the sequel, we will formalize these two  notions.
	
	We begin with some notation. Let $\Gamma:=(C,\eta,\theta,\gamma)$ where the aforementioned constants are taken from Assumptions~\ref{as:Lipschitz}~and~\ref{as:nn_dists}. Consider the following family of distributions:
	\begin{equation}\label{eq:formalt}
		\begin{small}
			\hloc:=\left\{f_{X,Y}^{(n)}(\cdot): |\xj-\xi_0|\geq c_n,\ f_{X,Y}^{(n)}\ \mbox{satisfies\ \ref{as:Lipschitz},\ \ref{as:nn_dists}\ with\ constants}\ \Gamma=(C,\eta,\theta,\gamma)\right\}.
		\end{small}
	\end{equation}
	In other words, the family $\hloc$ consists of the family of joint distributions which admit $\xi(\cdot)$ values at a $\ge c_n$ distance away from the null value $\xi_0$, and also satisfy Assumptions~\ref{as:Lipschitz}~and~\ref{as:nn_dists}. Next, given a test function $\Phi_n\equiv \Phi_n((X_1,Y_1),\ldots ,(X_n,Y_n))$, consider its power function
	\begin{equation}\label{eq:poweralt}
		\beta_{\Phi_n}(\hloc):=\inf_{f_{X,Y}^{(n)}(\cdot,\cdot)\in \hloc} \P(\Phi_n\ \mbox{rejects}\ \hloc).
	\end{equation}
	In terms of the definitions in~\eqref{eq:formalt}~and~\eqref{eq:poweralt}, the goal of a good test $\Phi_n$ would be to ensure that $\beta_{\Phi_n}(\hloc)\to 1$ as $n\to\infty$ for ``small" values of $c_n$. Given a level parameter $\alpha\in (0,1)$, We define our candidate good test as follows: 
	\begin{equation}\label{eq:goodtest}
		\phi_n:=\mathbbm{1}\left(|\xi_n-\xi_0|\ge \frac{1+5\sqrt{\log{(2/\alpha)}}}{\sqrt{n}}\right).
	\end{equation}
	The following theorem provides matching upper and lower bounds for $\beta_{(\cdot)}(\hloc)$.
	\begin{theorem}[Testing for degree of association]\label{thm:test_assoc}\hfill
		\begin{itemize}
			\item[(1)] \emph{(Upper bound for $\xi_n$)}. Fix an arbitrary $\alpha\in (0,1)$ and consider the test $\phi_n$ from~\eqref{eq:goodtest}. Then it is an asymptotically level $\alpha$ under $\mathrm{H}_0$ and 
			$$\beta_{\phi_n}(\hloc)\to 1$$
			provided $\sqrt{n}c_n\to\infty$.
			\item[(2)] \emph{(Minimax lower bound)}. There exists some $\alpha\in (0,1)$ such that for any level $\alpha$ test $\Phi_n$ based on $(X_1,Y_1),\ldots ,(X_n,Y_n)$, the following holds:
			$$\liminf\limits_{n\to\infty} \beta_{\Phi_n}(\hloc)< 1$$
			whenever $\liminf\limits_{n\to\infty}\sqrt{n}c_n<\infty$.
		\end{itemize}
	\end{theorem}
	
	The two parts of~\cref{thm:test_assoc} suggest that Chatterjee's correlation based test is particularly suitable for testing the strength of association between two random variables and is able to detect small departures from a fixed \emph{non-zero} value of $\xj$ (i.e., $\xi_0$) at the optimal rate. It is worth noting that the lower bound in~\cref{thm:test_assoc} heavily relies on the fact that $0<\xi_0<1$.
	
	\begin{remark}
		In~\cite{lin2022limit}, the authors show that $$\frac{\sqrt{n}(\xi_n-\xj)}{\hat{\sigma}_n}\overset{w}{\longrightarrow} \mathcal{N}(0,1),$$
		where $\hat{\sigma}_n$ is a function of the data, and $\hat{\sigma}_n=O_p(1)$. Therefore, a natural alternative to $\phi_n$ from \eqref{eq:goodtest} would be
		$$\phi_n=\mathbbm{1}(\sqrt{n}|\xi_n-\xi_0|\ge z_{\alpha/2}),$$
		where $z_{\alpha/2}$ is the upper $\alpha/2$ standard Gaussian quantile and $\xj=\xi_0$. This test was proposed in \cite[Remark 1.4]{lin2022limit} and it has asymptotic size exactly equal to $\alpha$. On the flip side, from the definition of $\hat{\sigma}_n$ (see \cite[Theorem 1.1]{lin2022limit}), it seems that it has $O(n^2)$ time complexity which may be a greater computational burden depending on the application at hand.
	\end{remark}
	
	\begin{remark}
		In \cite{lin2021boosting}, the authors show that if $(X,Y)$ have a bivariate Gaussian distribution, then the detection boundary for $\xi_n$ when $\xi_0=0$ can be improved to near parametric rates by incorporating more ``right nearest neighbors". We would conjecture that the test in \cite{lin2021boosting} achieves $O(n^{-1/2})$ detection boundary when $\xi_0\in (0,1)$ and $(X,Y)$ are non-Gaussian. In view of \cref{thm:test_assoc}, this would imply that the test in \cite{lin2021boosting} attains (near) parametric efficiency whenever $\xi_0\in [0,1)$ (at the expense of greater computational complexity). A conclusive answer on this and an inspection of the relevant applications might be of independent interest.
	\end{remark}
	
	\appendix
	
	\section{Proofs of main results}\label{sec:proofs}
	In this section, we will prove the main results in this paper. The proofs require a number of technical results, which are proved in Sections 2 and 3 of the Appendix B.
	\subsection{Proofs from Section~\ref{sec:main_results}}
	\label{sec:proofs-sec2}
	\subsubsection{Proof of Theorem~\ref{th:xi_n_wass}}\label{sec:pfxi_n_wass}
	\emph{Part (i).} As mentioned in Section~\ref{sec:main_results}, our proof proceeds through studying an oracle version of $\xi_n$. Observe that $\xi_n$ can be rewritten as 
	\begin{equation}\label{eq:to_proj}
		\begin{split}
			\xi_n=\dfrac{3n}{n^2-1}\left(\dfrac{1}{n}\underset{i\neq j}{\sum\sum }\abs*{\hat{F}_n(Y_{n,i})-\hat{F}_n(Y_{n,j})}-\sum_{i=1}^n\abs*{\hat{F}_n(Y_{n,(i+1)})-\hat{F}_n(Y_{n,(i)})}\right),
		\end{split}
	\end{equation}
	
	where $\hat{F}_n(\cdot )$ is the empirical cumulative distribution function (CDF) of $Y_{n,1},\ldots ,Y_{n,n}$. Let $F^{(n)}_Y(\cdot)$ denote the population CDF of $Y_{n,1}$. Recall that $X_{n,(1)}\leq \ldots \leq X_{n,(n)}$ and $Y_{n,(i)}$ is the $Y$ value concomitant to $X_{n,(i)}$. The main idea is to show that we can replace $\hat{F}_n(\cdot )$ by $F^{(n)}_Y(\cdot)$ asymptotically. In other words, we show that $\xi_n$ is close (with quantitative error bounds) to $\xi_n^*$ where
	\begin{equation}
		\label{eq:oracle_stat}
		\xi_n^*:=\dfrac{3n}{n^2-1}\left(\dfrac{1}{n}\underset{i\neq j}{\sum\sum }\abs*{\F(Y_{n,i})-\F(Y_{n,j})}-\sum_{i=1}^n\abs*{F^{(n)}_Y(Y_{n,(i+1)})-F^{(n)}_Y(Y_{n,(i)})}\right).
	\end{equation}
	The following theorem characterizes the asymptotic variances and the distance between $\xi^*_n$ and $\xi_n$.
	\begin{theorem}\label{th:fstarvar} 
		Suppose that Assumptions \ref{as:Lipschitz} and \ref{as:nn_dists} hold. Then there is a  constant $C>0,$ such that for $n\ge 1,$ 
		$$\max\left\{n\mathrm{Var}(\xi_n-\xi_n^*),\abs*{n\mathrm{Var}(\xi_n)-{2\over 5}}\right\}\le Cn^{-1}+C\xj+C\sqrt{n\log n}\,b_n\ind(\xj>0)$$
		where 
		\begin{equation}\label{eq:defbn}
			b_n:=n^{-\frac{\gamma}{\gamma+1}}(\log{n})^2+\left(\dfrac{(\log{n})^2}{n}\right)^{\left(\frac{\gamma(\theta-1)}{\theta(\gamma+1)}\wedge \frac{\eta \gamma}{\gamma+1}\right)}.
		\end{equation}
		%for any shrinking alternative $H_{1,n,\downarrow}(\Delta_n)$ defined in \ref{def:shrink_alt}. 
	\end{theorem}
	The next theorem characterizes the rate of convergence of $\frac{\sqrt{n}\big(\xi_n^*-\EE(\xi_n^*)\big)}{\sigma_n^*}$ to normality.
	\begin{theorem}
		\label{th:oracle_wass} 
		There is a constant $C^*>0$ such that for all $n\ge 1$ we have
		$$\mD\left({\sqrt{n}\big(\xi_n^*-\EE(\xi_n^*)\big)\over\sigma_n^*}\right) \le \dfrac{C^*}{\sqrt{n}}\left(\dfrac{1}{\sigma_n^{*2}}+\dfrac{1}{2\sigma_n^{*3}}\right),$$
		where $\sigma_n^*:=\sqrt{\mathrm{Var}(\xi_n^*)}$ and $\mD$ is the Wasserstein distance to normality defined in Definition 2.2 of the main paper.
	\end{theorem}
	
	We defer the proofs of these theorems to Section 2 of Appendix B, and proceed to use these results to prove Theorem~\ref{th:xi_n_wass}.
	
	For any  $p\in(0,1)$ we can write
	\begin{equation}\label{eq:def_g}
		f_{Y|X}^{(n)}(y|x)=(1-p)f_Y^{(n)}(y)+p g(y|x),\text{ where } g(y|x)=\frac{f_{Y|X}^{(n)}(y|x)-f_Y^{(n)}(y)}{p}+f_Y^{(n)}(y).
	\end{equation}
	Here $f_Y^{(n)}(y)=\int \jtd^{(n)}(x,y)dx$ is the marginal density of $Y_{n,1}$. We have
	\begin{equation}\label{eq:g_cond_CDF}
		G(y|x):=\int _{-\infty}^yg(t|x)dt=\tfrac{1}{p}F_{Y|X}^{(n)}(y|x)+(1-\tfrac{1}{p})F_Y^{(n)}(y);\quad\quad \int G(y|x)f_X^{(n)}(x)dx=F_Y^{(n)}(y).
	\end{equation}
	
	For each $i \in [n]$, let $N(i)\in \{j\in[n]:\mathrm{Rank}(X_{n,j})=\mathrm{Rank}(X_{n,i})+1\}$ be the unique index $j$ such that $X_{n,j}$ is immediately to the right of $X_{n,i}$ when $X_{n,i}$'s are arranged in increasing order. If there are no such indices for some $i$, set the corresponding $N(i)=1$. To show part (i) of Theorem~\ref{th:xi_n_wass} we begin by observing that $\EE(\sqrt{n}\xi_n)$ can be simplified as follows.
	\begin{align}
		\label{eq:mean_xi_n}
		&\EE(\sqrt{n}\xi_n)\nonumber
		\\=&\sqrt{n}\EE\left(1-\frac{3}{n^2-1}\sum_{i=1}^{n-1}\abs*{R_i-R_{N(i)}}\right)\nonumber
		\\=&\sqrt{n}\EE\left[1-\frac{3}{n^2-1}\sum_{i=1}^{n-1}(R_i+R_{N(i)}-2\min\{R_i,\,R_{N(i)}\})\right]\nonumber
		\\=&\sqrt{n}\EE\left(-2+\frac{6}{n^2-1}\sum_{i=1}^{n-1}\min\{R_i,\, R_{N(i)}\}\right)+O(n^{-1/2})\nonumber
		\\=&-2\sqrt{n}+\dfrac{6n^{3/2}}{n^2-1}\left(1+\sum_{k\neq 1, N(1)}\EE \left(\EE(\ind(Y_{n,k}\le \min\{Y_{n,1},Y_{n,N(1)}\})|\bX^{(n)})\right)\right)+O(n^{-1/2}).
	\end{align} 
	Hence, to characterize the asymptotic behavior of $\EE(\sqrt{n}\xi_n)$, we focus on the asymptotics of the term $\EE \left(\EE(\ind(Y_{n,k}\le \min\{Y_{n,1},Y_{n,N(1)}\})|\bX^{(n)})\right)$.
	
	Depending on which part of $f_{Y|X}^{(n)}$ contributes to the random variables $Y_{n,1},Y_{n,N(1)},Y_{n,k}$, we can separate $\EE(\sqrt{n}\xi_n)$, we focus on the asymptotics of the term $\EE \left(\EE(\ind(Y_{n,k}\le \min\{Y_{n,1},Y_{n,N(1)}\})|\bX^{(n)})\right)$ into the following terms.
	\begin{equation}\label{eq:sumterms}
		\EE\left(\EE(\ind(Y_{n,k}\le \min\{Y_{n,1},Y_{n,N(1)}\})|\bX^{(n)})\right)=:(1-p)^3T_0+(1-p)^2p T_1+(1-p)p^2T_2+p^3T_3.
	\end{equation}
	Here 
	\begin{align*}
		T_0:=&~
		\int\ind(y_k<\min\{y_1,y\})f_Y^{(n)}(y)f_Y^{(n)}(y_1)f_Y^{(n)}(y_k)
		dydy_1dy_k
	\end{align*}
	while the other terms are defined as
	\begin{align*}
		T_1:=
		&~
		\int\int \sum_{j\neq 1}\ind(N(1)=j) \ind(y_k<\min\{y_1,y_j\})f_Y(y_1)f_Y(y_k)g(y_j|x_j)dy_1dy_kdy_jf_{\bX}(\bx)\,d\bx
		\\&
		+\int\int \sum_{j\neq 1}\ind(N(1)=j) \ind(y_k<\min\{y_1,y_j\})f_Y(y_1)g(y_k|x_k)f_Y(y_j)dy_1dy_kdy_j f_{\bX}(\bx)\,d\bx
		\\&
		+\int\int \sum_{j\neq 1}\ind(N(1)=j) \ind(y_k<\min\{y_1,y_j\})g(y_1|x_1)f_Y(y_k)f_Y(y_j)dy_1dy_kdy_j f_{\bX}(\bx)\,d\bx,\\
		T_2:=
		&~
		\int \int \sum_{j\neq 1}\ind(N(1)=j) \ind(y_k<\min\{y_1,y_j\})g(y_1|x_1)g(y_k|x_k)f_Y(y_j)dy_1dy_kdy_j f_{\bX}(\bx)\,d\bx
		\\&
		+\int \int \sum_{j\neq 1}\ind(N(1)=j) \ind(y_k<\min\{y_1,y_j\})f_Y(y_1)g(y_k|x_k)g(y_j|x_j)dy_1dy_kdy_j f_{\bX}(\bx)\,d\bx
		\\&
		+\int \int \sum_{j\neq 1}\ind(N(1)=j) \ind(y_k<\min\{y_1,y_j\})g(y_1|x_1)f_Y(y_k)g(y_j|x_j)dy_1dy_kdy_j f_{\bX}(\bx)\,d\bx.	\\
		T_3:=
		&~
		\int \int \sum_{j\neq 1,k}\ind(N(1)=j) \ind(y_k<\min\{y_1,y_j\})g(y_1|x_1)g(y_k|x_k)g(y_j|x_j)dy_1dy_kdy_j f_{\bX}(\bx)\,d\bx
		\\&
		=\int \int \sum_{j\neq 1}\ind(N(1)=j) \ind(y_k<\min\{y_1,y_j\})g(y_1|x_1)f_Y(y_k)g(y_j|x_j)dy_1dy_kdy_j f_{\bX}(\bx)\,d\bx.
	\end{align*}
	We can show that $T_0$ satisfies the following.
	\begin{align*}
		T_0=&\int\ind(y_k<\min\{y_1,y\})f_Y^{(n)}(y)f_Y^{(n)}(y_1)f_Y^{(n)}(y_k)dydy_1dy_k\\
		=&\PP(Y_{n,k}<\min\{Y_{n,1},Y_{n,2}\}|Y_{n,1},Y_{n,2},Y_{n,k}\sim f_Y^{(n)})=\frac{1}{3}.\numberthis\label{eq:term_T0}
	\end{align*}

	Let us define
	$$
	H(f,g):=\int \EE\left(\int_t^{\infty} g(y|X_{n,1})dy\right)^2f_Y^{(n)}(t)dt=\int \EE(1-G(t|X_{n,1}))^2f_Y^{(n)}(t)dt.
	$$
	The second, third, and fourth terms can be controlled using the following lemma which has been proved in Section 3 of Appendix B:
	\begin{lemma}\label{lem:bias_rems} 
		$$		T_1=1+r_n, \,\,
		T_2=\dfrac{2}{3}+H(f,g)+r_n, \,\,
		T_3=H(f,g)+r_n,
		$$
		where $|r_n|\lesssim b_n\ind(\xj>0).$
	\end{lemma}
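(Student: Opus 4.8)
The plan is to reduce each of $T_1,T_2,T_3$ to an explicit integral by conditioning on $\bX^{(n)}$, substituting the decomposition in \eqref{eq:def_g}, and then evaluating the resulting monomials in $p$ one at a time. Conditional on $\bX^{(n)}$ the $Y_{n,i}$ are independent with $Y_{n,i}$ having conditional CDF $F_{Y|X}^{(n)}(\cdot\mid X_{n,i})$, so
\[
\EE\big[\ind(Y_{n,k}\le\min\{Y_{n,1},Y_{n,N(1)}\})\,\big|\,\bX^{(n)}\big]
=\int \big(1-F_{Y|X}^{(n)}(y\mid X_{n,1})\big)\big(1-F_{Y|X}^{(n)}(y\mid X_{n,N(1)})\big)\,f_{Y|X}^{(n)}(y\mid X_{n,k})\,dy .
\]
Substituting $F_{Y|X}^{(n)}(\cdot\mid x)=(1-p)F_Y^{(n)}+pG(\cdot\mid x)$ and $f_{Y|X}^{(n)}(\cdot\mid x)=(1-p)f_Y^{(n)}+pg(\cdot\mid x)$ and expanding the triple product gives the polynomial identity in $p$ recorded in \eqref{eq:sumterms}: after $\EE_{\bX^{(n)}}$, $T_1$ is the sum of the three monomials of $p$-degree $1$, $T_2$ the sum of the three of $p$-degree $2$, and $T_3$ the single monomial of $p$-degree $3$. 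Each such monomial has the form $\EE_{\bX^{(n)}}\!\int A_1(y)A_2(y)B(y)\,dy$ with $A_1\in\{1-F_Y^{(n)}(y),\,1-G(y\mid X_{n,1})\}$, $A_2\in\{1-F_Y^{(n)}(y),\,1-G(y\mid X_{n,N(1)})\}$, $B\in\{f_Y^{(n)}(y),\,g(y\mid X_{n,k})\}$, the $p$-degree being the number of $G$/$g$-type factors chosen.

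Each monomial is then evaluated using three reductions. (a) A factor $1-G(y\mid X_{n,1})$, or a density $g(y\mid X_{n,k})$, on which no other factor of the monomial depends through the same $X$-coordinate, that coordinate having law (essentially) $f_X^{(n)}$, is integrated out \emph{exactly} via \eqref{eq:g_cond_CDF} ($\int G(y\mid x)f_X^{(n)}(x)\,dx=F_Y^{(n)}(y)$, equivalently $\int g(y\mid x)f_X^{(n)}(x)\,dx=f_Y^{(n)}(y)$). (b) A factor $1-G(y\mid X_{n,N(1)})$ occurring together with $1-G(y\mid X_{n,1})$ is replaced by the latter: since $X_{n,N(1)}$ is the $X$-observation immediately to the right of $X_{n,1}$, Assumption~\ref{as:Lipschitz} gives $|G(y\mid X_{n,N(1)})-G(y\mid X_{n,1})|\le p^{-1}\big(1+L_1^{(n)}(X_{n,1},y)+L_2^{(n)}(X_{n,N(1)},y)\big)|X_{n,N(1)}-X_{n,1}|^{\eta}$, and the product $\big(1-G(y\mid X_{n,1})\big)\big(1-G(y\mid X_{n,N(1)})\big)$ becomes $\big(1-G(y\mid X_{n,1})\big)^2$, whose integral against $f_Y^{(n)}$ is $H(f,g)$ by definition. (c) A lone factor $1-G(y\mid X_{n,N(1)})$ is first replaced as in (b) and then integrated out as in (a). Running through the cases: the three $p$-degree-$1$ monomials each reduce to $\int(1-F_Y^{(n)})^2 f_Y^{(n)}=\tfrac13$, so $T_1=1$; of the three $p$-degree-$2$ monomials, one reduces (via (b)) to $H(f,g)$ and the other two (via (a)) to $\tfrac13$, so $T_2=\tfrac23+H(f,g)$; the $p$-degree-$3$ monomial reduces to $H(f,g)$ (apply (a) to $g(y\mid X_{n,k})$, then (b)), so $T_3=H(f,g)$. (One also has $H(f,g)=\tfrac13+\tfrac{1}{6p^2}\xj$ by \eqref{eq:g_cond_CDF} and the definition \eqref{eq:popdef} of $\xi(\cdot)$, showing how the signal enters; this is not needed for the lemma.)

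Finally, all the approximation errors are $\lesssim b_n\ind(\xj>0)$. If $\xj=0$ then $F_{Y|X}^{(n)}(\cdot\mid x)\equiv F_Y^{(n)}(\cdot)$ for $f_X^{(n)}$-a.e.\ $x$, hence $G(\cdot\mid x)\equiv F_Y^{(n)}(\cdot)$ and $g(\cdot\mid x)\equiv f_Y^{(n)}(\cdot)$, so every reduction above is exact and $r_n=0$ --- this produces the indicator. When $\xj>0$ the errors are of three kinds: (i) the discrepancy between the law of a generic $X_{n,k}$, $k\ne 1,N(1)$, and $f_X^{(n)}$, which contributes a term of order $n^{-1}$, hence $\lesssim b_n$; (ii) the nearest-neighbour replacement in (b)--(c), which after integration against $f_Y^{(n)}(y)\,dy$ is bounded (using $|1-G|\le 1$) by $p^{-1}\,\EE\big[(1+L_1^{(n)}(X_{n,1},Y^{\ast})+L_2^{(n)}(X_{n,N(1)},Y^{\ast}))\,|X_{n,N(1)}-X_{n,1}|^{\eta}\big]$ with $Y^{\ast}\sim f_Y^{(n)}$ independent of $\bX^{(n)}$; and (iii) the event $\{X_{n,1}=\max_j X_{n,j}\}$, on which $N(1)=1$, of probability $n^{-1}\lesssim b_n$. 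A H\"older split of (ii) using the uniform $L^{\theta}$ control \eqref{eq:Lipschitz2} on $L_i^{(n)}$ together with standard stochastic bounds on the spacings $|X_{n,(i+1)}-X_{n,(i)}|$ under the tail Assumption~\ref{as:nn_dists} --- the same estimates used in the proof of \cref{th:fstarvar} --- bounds (ii) by $b_n$, completing the proof.

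The step I expect to be the main obstacle is the interplay of reductions (b)--(c) with this error analysis: the factors indexed by $X_{n,1}$ and by its nearest right-neighbour $X_{n,N(1)}$ are dependent and cannot be integrated out separately, and it is exactly the use of their proximity that turns what would otherwise be a trivial constant into the genuine functional $H(f,g)$ that carries the signal $\xj$. Making the proximity bound uniform in $y$ and converting the spacing estimates into the explicit rate $b_n$ is the delicate point; the remainder is bookkeeping of the binomial expansion in $p$.
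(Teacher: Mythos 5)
Your proposal follows essentially the same route as the paper's proof: the same binomial expansion of the triple product according to which of $Y_{n,1},Y_{n,N(1)},Y_{n,k}$ draws its $g$-part, exact integration of the independent coordinates via $\int g(y|x)f_X^{(n)}(x)\,dx=f_Y^{(n)}(y)$, the nearest-neighbour replacement $G(\cdot|X_{n,N(1)})\to G(\cdot|X_{n,1})$ controlled by Assumption \ref{as:Lipschitz}, H\"older's inequality and the spacing bound of \cref{lem:nndist} to produce $b_n$, and the observation that $g\equiv f_Y^{(n)}$ under independence yields the indicator. The identification of the surviving monomials with $\tfrac13$ and $H(f,g)$ matches the paper's computation exactly.
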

	
	We have from \eqref{eq:sumterms} that
	\begin{align*}\label{eq:sumbound}
		&\EE\left(\EE(\ind(Y_{n,k}\le \min\{Y_{n,1},Y_{n,N(1)}\})|\bX^{(n)})\right)
		\\=&(1-p)^3T_0+(1-p)^2p T_1+(1-p)p^2T_2+p^3T_3
		\\=&(1-p)^3\cdot\dfrac{1}{3}+(1-p)^2p\left[1+r_n\right]
		+(1-p )p ^2\left[\dfrac{2}{3}+H(f,g)+r_n\right]+p ^3\cdot A_3
		\\=&\dfrac{1}{3}+p \left(-1+1\right)+p^2\left(1-2+\dfrac{2}{3}+H(f,g)\right)+p ^3\left(-\dfrac{1}{3}+1-\dfrac{2}{3}-H(f,g)+H(f,g)\right)+r_n
		\\=&\dfrac{1}{3}+p ^2\left(H(f,g)-\dfrac{1}{3}\right)+r_n,\numberthis
	\end{align*}
	where $|r_n|\le Cn^{-\frac{\gamma}{\gamma+1}}(\log{n})^2+C\left(\dfrac{(\log{n})^2}{n}\right)^{\left(\frac{\gamma(\theta-1)}{\theta(\gamma+1)}\wedge \frac{p \gamma}{\gamma+1}\right)}$ for some constant $C>0$.
	
	Recall that $\xi(f_{X,Y}^{(n)})$ is the population measure of association when $X\sim f_X^{(n)},$ $Y\sim f_Y^{(n)},$  and $Y|X\sim f_{Y|X}^{(n)}(y|x)\equiv (1-p )f_Y^{(n)}(y)+p g(y|x).$ Then
	\begin{align*}\label{eq:popcorr_alt}
		\xi(f_{X,Y}^{(n)})/6 
		=&\int\EE[\PP(Y_{n,1}\ge t|X_{n,1})-\PP(Y_{n,1}\ge t)]^2f_Y^{(n)}(t)dt \\
		=&\int\bigg[(1-p )^2\Var\PP(Y_{n,1}\ge t) +p ^2\Var(1-G(t|X_{n,1}))\\
		&\hspace*{2cm}+2p (1-p )\Cov(\PP(Y_{n,1}\ge t),1-G(t|X_{n,1}))\bigg]f_Y^{(n)}(t)dt\\
		=&p^2\int\Var(1-G(t|X_{n,1}))f_Y^{(n)}(t)dt.
	\end{align*}
	The last equality follows since $\PP(Y_{n,1}\ge t)$ does not depend on $X_{n,1}.$
	By the definition, $H(f,g)=\int\EE((1-G(t|X_{n,1}))^2)f_Y^{(n)}(t)dt.$ Next, 
	$$
	\int(\EE(1-G(t|X_{n,1})))^2f_Y^{(n)}(t)dt=\int (1-F_Y^{(n)}(t))^2f_Y^{(n)}(t)dt=\EE (1-F_Y^{(n)}(Y))^2=\int_0^1u^2du=\dfrac{1}{3}.
	$$
	This means 
	\begin{equation}\label{eq:H_to_xi}
		6p ^2\left(H(f,g)-\dfrac{1}{3}\right)=6p ^2\int\Var(1-G(t|X_{n,1}))f_Y^{(n)}(t)dt=\xi(f_{X,Y}^{(n)}).
	\end{equation}
	
	\noindent Plugging \eqref{eq:sumbound} and \eqref{eq:H_to_xi} back into \eqref{eq:mean_xi_n},
	\begin{align*}
		&\EE(\sqrt{n}\xi_n)
		\\=&-2\sqrt{n}+\dfrac{6n^{3/2}}{n^2-1}\left(1+\sum_{k\neq 1, N(1)}\EE \left(\EE(\ind(Y_{n,k}\le \min\{Y_{n,1},Y_{n,N(1)}\})|\bX^{(n)})\right)\right)+O(n^{-1/2})	
		\\=&-2\sqrt{n}+\dfrac{6n^{5/2}}{n^2-1}\left[\dfrac{1}{3}+p ^2\left(H(f,g)-\dfrac{1}{3}\right)+3r_n\right]+O\left(\dfrac{1}{\sqrt{n}}\right)
		\\=&\,6\sqrt{n}p ^2\left(H(f,g)-\dfrac{1}{3}\right)+18\sqrt{n}r_n+O\left(\dfrac{1}{\sqrt{n}}\right)
		\\=&\,\sqrt{n}\xi(f^{(n)}_{X,Y})+O(n^{-1/2})+O(\sqrt{n}b_n)\ind(\xj>0).
	\end{align*}
	This finishes the proof of part (i) of Theorem~\ref{th:xi_n_wass}. \qed
	
	\bigskip
	
	\noindent \emph{Part (ii).} 
	Let $\mu^*_n$ and $\mu_n$ be the laws of $W_n^*=(\xi^*_n-\EE \xi^*_n)\sqrt{\Var(\xi_n^*)}$ and $\sqrt{n}(\xi_n-\EE \xi_n)/\sqrt{2/5}$. By Theorem \ref{th:fstarvar}, we have the upper bound
	$$
	\mW(\mu^*_n,\mu_n)\le \mW_2(\mu^*_n,\mu_n)\lesssim n^{-1/2}+\left(\xj+\sqrt{n\log n}\,b_n\right)^{1/2}\ind(\xj>0),
	$$
	where $\mW_2$ is the Wasserstein-2 distance, and $\mW$ is the Wasserstein-1 distance defined in Definition~\ref{def:wass}. 
	
	Now if we define $\mu'_n$ to be the law of $\sqrt{n}(\xi_n-\xj)/\sqrt{2/5}$, then by part (i) of the theorem
	\begin{align*}
		\mW(\mu^*_n,\mu'_n)\le & \sqrt{n}\abs*{\EE\xi_n-\xj}+\mW(\mu^*_n,\mu_n)\\ 
		\lesssim &n^{-1/2}+\left(\xj+\sqrt{n\log n}\,b_n\right)^{1/2}\ind(\xj>0).
	\end{align*}
	Finally notice that for the standard normal law $\nu$,
	\begin{align*}
		\mD\left(\dfrac{\sqrt{n}(\xi_n-\xj)}{\sqrt{2/5}}\right)
		=&\mW(\mu'_n,\,\nu)\le \mW(\mu'_n,\mu^*_n)+\mW(\mu^*_n,\,\nu)\\
		=& \mW(\mu_n^*,\mu_n')+\mD(W^*_n) \\
		\lesssim & \left(\xj+\sqrt{n\log n}\,b_n\right)^{1/2}\ind(\xj>0)+n^{-1/2},
	\end{align*}
	where we use \cref{th:oracle_wass} in the last step. \qed

	\subsection{Proofs from Section~\ref{sec:applications}}

	\subsubsection{Proof of Proposition~\ref{pr:xi_mix}}\label{pf:xi_mix}
	Let us observe that using equation~\eqref{eq:contam_final}, the marginal densities of $X$ and $Y$ under $\jtdt(\cdot,\cdot)$ are $f_X(\cdot)$ and $f_Y(\cdot)$ respectively for any $r_n$. Let us define, \begin{equation}\label{eq:xi_jtd1} F_{Y|X;r_n}(y|x):=\int\limits_{-\infty}^{y}\frac{\jtdt(t,x)}{f_X(x)}\,dt
	\end{equation}
	and observe that
	\begin{equation}
		\label{eq:xi_jtd}
		\xi(\jtdt)=6\,\Bigg[\int\limits_{-\infty}^{\infty}\mathbb{E}_X[F^2_{Y|X;r_n}(t|X)]\,f_Y(t)dt-\frac{1}{3}\Bigg].
	\end{equation}
	Let $F_Y(\cdot)$ be the CDF of $Y$. Next, note that~\eqref{eq:xi_jtd1} implies:
	\[
	F_{Y|X;r_n}(t|x) = (1-r_n)F_Y(t)+r_n\int\limits_{-\infty}^{t}\frac{g_{X,Y}(x,w)}{f_X(x)}\,dw.
	\]
	Let $g_X(\cdot)$ be the marginal density of $X$ under $\cde(\cdot,\cdot)$. By equation~\eqref{eq:contam_final}, $g_X(\cdot)=f_X(\cdot)$. Since
	$
	\int\limits_{-\infty}^{\infty}F^2_Y(t)f_Y(t)\,dt = \frac{1}{3},
	$
	we have that
	\begin{align*}
		\int\limits_{-\infty}^{\infty}\mathbb{E}_X[F^2_{Y|X;r_n}(t|X)]\,f_Y(t)dt &= (1-r_n)^2\frac{1}{3}+2r_n(1-r_n)\int\limits_{-\infty}^{\infty}F_Y(t)\mathbb{E}_X\left[\int\limits_{-\infty}^{t}\frac{g_{X,Y}(X,w)}{f_X(X)}\,dw\right]f_Y(t)\,dt\\
		&\quad\quad+ r^2_n \int\limits_{-\infty}^{\infty}\mathbb{E}_X\left[\int\limits_{-\infty}^{t}\frac{g_{X,Y}(X,w)}{f_X(X)}\,dw\right]^2f_Y(t)\,dt\\
		& = \frac{1}{3}-\frac{1}{3}r^2_n + r^2_n \int\limits_{-\infty}^{\infty}\mathbb{E}_X\left[\int\limits_{-\infty}^{t}\frac{g_{X,Y}(X,w)}{f_X(X)}\,dw\right]^2f_Y(t)\,dt.\\
		& =\frac{1}{3}+\frac{r_n^2}{6}\left(-2+\int\limits_{-\infty}^{\infty}\mathbb{E}_X\left[\int\limits_{-\infty}^{t}\frac{g_{X,Y}(X,w)}{g_X(X)}\,dw\right]^2f_Y(t)\,dt\right)\\ &=\frac{1}{3}+\frac{r_n^2\xi(\cde)}{6}.
	\end{align*}
	Plugging the above display in \eqref{eq:xi_jtd} completes the proof.
	\qed
	
	\subsubsection{Proof of Proposition~\ref{pr:xi_reg}}\label{pf:xi_reg}
	Let $(X',Y',Z')\overset{d}{=}(X,Y,Z)$ where $(X',Y',Z')$ is independent of $(X,Y,Z)$ and $(X,Y,Z)$ be drawn according the distribution defined by equation~\eqref{eq:reg_mod} with $\sigma\equiv \sigma_n$. 
	
	Let us observe that
	\begin{align}\label{eq:xi_reg1}
		\xi(\jtds)&=-2+6\E_{Y'}\E_{X}\left[\P(Y\leq Y'|X,Y')\right]^2\nonumber \\ &=-2+6\E_{X',Z'}\E_X\left[\P(Y\leq g(X')+\sigma_n Z'|X,X',Z')\right]^2.
	\end{align}
	
	Under equation~\eqref{eq:reg_mod}, it is easy to check that
	\begin{equation}\label{eq:xi_reg2}
		\P^2(Y\leq g(X')+\sigma_n Z'|X,X',Z')=\Phi^2\left(Z'+\frac{g(X')-g(X)}{\sigma_n}\right),
	\end{equation}
	where $\Phi(\cdot)$ is the CDF of $N(0,1)$. All the derivatives of $\Phi^2(\cdot)$ are uniformly bounded. Using this fact with a standard Taylor series expansion, we get that, for any $x,x',z'$, we get:
	\begin{multline}
		\Phi^2(z' + \sigma_n^{-1}(g(x^\prime)-g(x))) = \Phi^2(z) + 2 \sigma_n^{-1} \Phi(z)\phi(z)(g(x^\prime)-g(x)) + {\sigma_n^{-2}\over 2} \Big[2\phi^2(z)(g(x^\prime)-g(x))^2\\
		+2\Phi(z)\phi^\prime(z)(g(x^\prime)-g(x))^2\Big]
		+ O(\sigma_n^{-3}|g(x^\prime)-g(x)|^3),
	\end{multline}
	where $\phi(\cdot)$ is the density of the $N(0,1)$ distribution. By combining~\eqref{eq:xi_reg1}~and~\eqref{eq:xi_reg2}, we have,
	\begin{align*}
		\sqrt{n}\,\xi(\jtds) & = -2+6\sqrt{n}\E_{X',Z'}\E_{X}\Phi^2\left(Z'+\frac{g(X')-g(X)}{\sigma_n}\right)\\
		& = 6\sqrt{n}\,\sigma_n^{-2}\; \mathbb{E}_{X,X'}\Big\{g(X)-g(X^\prime)\Big\}^2\;\mathbb{E}_{Z'}\Big[\phi^2(Z')+\Phi(Z')\phi^\prime(Z')\Big] + O(\sqrt{n}\sigma_n^{-3})\\
		& = 6\times 2 \sqrt{n}\,\sigma_n^{-2} \,\mbox{Var}(g(X)) \times \frac{1}{4\sqrt{3}\pi} + O(\sqrt{n}\sigma_n^{-3})\\
		& = ({\sqrt{3n}/ \pi})\,\sigma_n^{-2} \mbox{Var}(g(X)) + O(\sqrt{n}\sigma_n^{-3}).
		\hspace{5.85cm}
		\qed
	\end{align*}
	\subsubsection{Proof of Proposition \ref{pr:xi_rot}}\label{pf:xi_rot}
	Let us begin by observing that the joint density of $(X,Y)$ is given by
	\[
	\jtdd(x,y) = \frac{1}{(1-\Dl_n^2)}\;f_1\left(\frac{x-\Dl_n y}{1-\Dl_n^2}\right)f_2\left(\frac{y-\Dl_n x}{1-\Dl_n^2}\right).
	\]
	Let $(X',Y')\overset{d}{=}(X,Y)$ where $(X',Y')$ is independent of $(X,Y)$ and $(X,Y)$ is drawn according to~\eqref{eq:rotation} with $\Dl\equiv \Dl_n$.  Notice that
	\begin{equation}\label{eq:altxi}
		\xi(\jtdd)/6= \E_{Y'\sim Y}\E_X\left[\P(Y\leq Y'|X,Y')-P(Y\leq Y'|Y')\right]^2
	\end{equation}
	Under the rotation model \eqref{eq:rotation} for any $t$, 
	\begin{align*}
		\PP(Y\le t|X)-\PP(Y\le t)=:\dfrac{\int\limits_{-\infty}^tA(x,y,\Delta_n)dy}{\int\limits_{-\infty}^\infty A(x,y,\Delta_n)dy}-\frac{1}{1-\Delta_n^2}\int\limits_{-\infty}^t\int\limits_{-\infty}^\infty A(x,y,\Delta_n)dx dy\numberthis\label{eq:tlr_prp}
	\end{align*}
	where
	$
	A(x,y,\Delta_n)=f_1\left(\frac{x-\Delta_n y}{1-\Delta_n^2}\right)f_2\left(\frac{y-\Delta_n x}{1-\Delta_n^2}\right)=:f_1(h_1(x,y,\Delta_n))f_2(h_2(x,y,\Delta_n))\,
	$.
	Without loss of generality, we can assume $|\Dl_n|\leq\epsilon$ where $\epsilon$ is as specified in Assumption (A3), part 3 from Section~\ref{sec:rot-alt}. Next, by Taylor expansion around $0$, we have
	\begin{align*}
		f_1\left(\frac{x-\Delta_n y}{1-\Delta_n^2}\right)=&f_1(x)-yf_1'(x)\Delta_n
		+\left(y^2\cdot \frac{\partial^2}{\partial\theta^2}f_1(h_1(x,y,\theta))\big|_{\theta=\xi} +2x\frac{\partial}{\partial\theta}f_1(h_1(x,y,\theta))\big|_{\theta=\xi}\right)\Delta_n^2\\
		f_2\left(\frac{y-\Delta_n x}{1-\Delta_n^2}\right)=&f_2(y)-xf_2'(y)\Delta_n
		+\left(x^2\cdot \frac{\partial^2}{\partial\theta^2}f_2(h_2(x,y,\theta))\big|_{\theta=\xi'} +2y\frac{\partial}{\partial\theta}f_2(h_2(x,y,\theta))\big|_{\theta=\xi'}\right)\Delta_n^2
	\end{align*}
	for some $\xi,\xi'\in[-\epsilon,\epsilon]$. Multiplying the last two equations, we have
	\begin{align*}
		A(x,y,\Delta_n)=:f_1(x)f_2(y)-\Delta_n(xf_1(x)f_2'(y)+yf_1'(x)f_2(y))+\Delta_n^2g(x,y,\Delta_n),
	\end{align*}
	which implies (since $\int xf_1(x)dx=\int yf_2(y)dy=0$)
	\begin{align*}
		\int\limits_{-\infty}^tA(x,y,\Delta_n)dy=&f_1(x)F_2(t)-\Delta_n[xf_1(x)f_2(t)+f_1'(x)J(t)]+\Delta_n^2\int\limits_{-\infty}^tg(x,y,\Delta_n)dy,\\
		\int\limits_{-\infty}^\infty A(x,y,\Delta_n)dy=&f_1(x)+\Delta_n^2\int\limits_{-\infty}^\infty g(x,y,\Delta_n)dy,\\
		\int\limits_{-\infty}^t\int\limits_{-\infty}^\infty A(x,y,\Delta_n)dx dy=& F_2(t)+\Delta_n^2\int\limits_{-\infty}^t\int\limits_{-\infty}^\infty g(x,y,\Delta_n)dx dy.\numberthis\label{eq:tlrdisp}
	\end{align*}	
	We define $G(x,t,\Delta_n):=\int\limits_{-\infty}^tg(x,y,\Delta_n)dy$. We also observe that the marginal density of $X$, say $f_{X,\Delta_n}(\cdot)$, satisfies the following:
	\begin{align*}
		f_{X,\Delta_n}(x)=\dfrac{1}{1-\Delta_n^2}\int\limits_{-\infty}^\infty A(x,y,\Delta_n)dy 
	\end{align*}
	Then by \eqref{eq:altxi}, \eqref{eq:tlr_prp} and \eqref{eq:tlrdisp} we now have
	\begin{align*}
		\xj/6=&\int\int \dfrac{\left[-\Delta_n(xf_1(x)f_2(t)+f_1'(x)J(t))+\Delta_n^2g_2(x,t,\Delta_n)\right]^2}{f_1(x)+\Delta_n^2G(x,\infty,\Delta_n)}dxf_{Y,\Delta_n}(t)dt\\
	\end{align*}
	where $f_{Y,\Dl_n}(\cdot)$ is the marginal density of $Y$ and  
	\begin{align*}
		g_2(x,t,\Delta_n):=&G(x,t,\Delta_n)-f_1(x)F_2(t)\int G(x,t,\Delta_n)dx-F_2(t)G(x,t,\Delta_n)\\
		&+\Delta_n^2F_2(t)G(x,t,\Delta_n)\int G(x,t,\Delta_n)dx.
	\end{align*}
	Next, let us observe that $G(x,t,\Delta_n)$ can be written as a sum of $\left(\dfrac{\partial^k}{\partial u^k}f_1(u)|u|^l\right)$ for $k\in\{0,1,2\}$ and $\ell\in\{0,1,2,3\}$. It can be checked using Assumption (A3) from Section \ref{sec:rot-alt}, that this implies $\int g_2(x,t,\Delta_n)^2/f_1(x)dx\lesssim 1.$ 
	
	We now expand the square above (and use the Cauchy-Schwarz inequality for the cross terms) to obtain
	\begin{align*}
		\xj/6=\Delta_n^2\int\int \dfrac{(xf_1(x)f_2(t)+f_1'(x)J(t))}{f_1(x)}dxf_{Y,\Delta_n}(t)dt+O(\Delta_n^3).
	\end{align*}
	Moreover, the marginal $f_{Y,\Delta_n}(y)$ under the alternative can be written, by a similar Taylor expansion, as 
	\begin{align*}
		f_{Y,\Delta_n}(y)=\frac{1}{1-\Delta_n^2}\int\limits_{-\infty}^\infty A(x,y,\Delta_n) dx=f_2(y)+\Delta_n^2\int\limits_{-\infty}^\infty g(x,y,\Delta_n)dx.
	\end{align*}
	Consequently,
	\begin{align*}
		&\xj/6=\Delta_n^2\int\int \dfrac{(xf_1(x)f_2(t)+f_1'(x)J(t))^2}{f_1(x)}dxf_{Y,\Delta_n}(t)dt+O(\Delta_n^3)\\
		=&\Delta_n^2\int\int \dfrac{x^2f_1(x)^2f_2(t)^2+(f_1'(x))^2(J(t))^2+2xf_1(x)f_2(t)f_1'(x)J(t)}{f_1(x)}dxf_{2}(t)dt+O(\Delta_n^3).\numberthis\label{eq:xjfin}
	\end{align*}
	It is not hard to see that
	\begin{align*}
		\int\int \frac{x^2f_1(x)^2f_2(t)^2}{f_1(x)}dxf_2(t)dt=&(\EE U^2)\EE(f_2(V)^2)=\EE(f_2(V)^2), \\
		\int\int \frac{(f_1'(x))^2J^2(t)}{f_1(x)}dx f_2(t)dt=&I(f_1)\EE(J^2(V)), \\
	\end{align*}
	and finally
	\begin{align*}
		\int\limits_{-\infty}^\infty\int\limits_{-\infty}^\infty xf_1'(x)f_2(t)J(t)dx f_2(t)dt=&\int\limits_{-\infty}^\infty J(t)f_2^2(t)dt\times \int\limits_{-\infty}^\infty xf_1'(x)dx
		%=&\int\limits_{-\infty}^\infty J(t)f_2^2(t)dt\times \left([xf_1(x)]_{-\infty}^\infty -\int\limits_{-\infty}^\infty f_1(x)dx \right)\\
		%=&-\int\limits_{-\infty}^\infty  J(t)f_2^2(t)dt
		=-\EE(J(V)f_2(V)).
	\end{align*}
	Plugging these back into \eqref{eq:xjfin} finishes the proof.\qed
	
	%\subsection*{Supplementary Material}
	%The proofs of results in Section~\ref{sec:degtest}, as well as those of some technical intermediate steps and lemmas are given in the supplement~\cite{ADN21-supp}.
	% in Appendix \ref{sec:proofs} require a number of technical results. These 
	%\begin{supplement}
	%\stitle{ }
	%\sdescription{The proofs of results in Section~\ref{sec:degtest}, as well as those of some technical intermediate steps and lemmas are given in the supplement~\cite{ADN21-supp}.}
	%\end{supplement}
	
	%\medskip

	Next, in Appendix~\ref{sec:proofs-add}, we prove the theorems as stated in Section 4 above, and state some technical results required in the process. In Appendix~\ref{sec:thmA1A2}, we prove Theorems A1 and A2 from Appendix A. Finally in Appendix~\ref{sec:lemmas}, we present the proofs of the 
	technical lemmas used to prove the theorems in the Appendix A as well as those in Sections~\ref{sec:proofs-add} and~\ref{sec:thmA1A2}. 
	
	\section{Proofs from Section 4}\label{sec:proofs-add}
	\subsection{Proof of Proposition 4.1}% in the main paper~\cite{ADN21}}
	
	Let $(X_2,Y_2)$ be generated independently of $(X_1,Y_1)$ and with the same distribution. Note that it suffices to show that
	$$\mathcal{J}:=\E_{(X_1,Y_1),\ Y_2} \left[\P(Y_1\leq Y_2|X_1,\ Y_2)\right]^2$$
	is a strictly increasing function of $|\rho|$, $r$ and $\sigma^{-1}$ in parts (1), (2) and (3) respectively.
	
	\vspace{0.1in} 
	
	\emph{Part (1).} Clearly, by replacing $X_1$, $Y_1$, and $Y_2$ by $(X_1-\mu_1)/\sigma_1$, $(Y_1-\mu_2)/\sigma_2$, and $(Y_2-\mu_2)/\sigma_2$, $\mathcal{J}$ does not change. Consequently we can assume without loss of generality $\mu_1=\mu_2=0$ and $\sigma_1=\sigma_2=1$. In the sequel, we will use $\Phi(\cdot)$ and $\phi(\cdot)$ to denote the probability distribution function and the probability density function of the standard normal distribution. With this in view, note that
	
	\begin{align}\label{eq:ximon1}
		\mathcal{J}&=\int_x \int_t \Phi^2\left(\frac{t-\rho x}{\sqrt{1-\rho^2}}\right)\phi(t)\phi(x)\,dt \,dx\nonumber \\&=\sqrt{1-\rho^2}\int_x \int_z \Phi^2(z)\phi\left(\rho x + \sqrt{1-\rho^2} z\right)\phi(x)\,dz \,dx \quad \quad \quad \left[\mbox{Put}\ z=\frac{t-\rho x}{\sqrt{1-\rho^2}}\right] \nonumber \\&=\frac{\sqrt{1-\rho^2}}{2\pi}\int_x \int_z \Phi^2(z)\exp\left(-\frac{1}{2}\left(\rho^2 x^2+z^2(1-\rho^2)+2zx\rho\sqrt{1-\rho^2}\right)\right)\exp\left(-\frac{x^2}{2}\right)\,dz\,dx \nonumber \\ &=\frac{\sqrt{1-\rho^2}}{2\pi}\int_z \Phi^2(z)\exp\left(-\frac{z^2}{2}\cdot \frac{1-\rho^2}{1+\rho^2}\right)\int_x \exp\left(-\frac{1+\rho^2}{2}\left(x+\frac{2\rho\sqrt{1-\rho^2}}{\sqrt{1+\rho^2}}\right)^2\right)\,dx\,dz \nonumber \\&=\frac{1}{\sqrt{2\pi}\rho_0}\int \Phi^2(z)\exp\left(-\frac{z^2}{2\rho_0^2}\right)\,dz \quad \quad \quad \quad \quad \quad \quad \quad \quad  \left[\mbox{Define}\ \rho_0:=\frac{1+\rho^2}{1-\rho^2}\right]\nonumber \\&=\int_{z>0} \left(\Phi^2(\rho_0 z)+\Phi^2(-\rho_0 z)\right)\phi(z)\,dz.
	\end{align}
	By differentiating the above with respect to $\rho_0$, we get from~\eqref{eq:ximon1} that:
	$$\frac{d}{d\rho_0}\mathcal{J}=\int_{z>0} z(2\Phi(\rho_0 z)-1)\phi(\rho_0 z)\rho(z)\,dz>0,$$
	which implies that $\mathcal{J}$ is a strictly increasing function of $\rho_0$ which in turn, is a strictly increasing function of $|\rho|$, thereby completing the proof.
	
	\vspace{0.1in} 
	
	\emph{Part (2).} Note that $Y_1,Y_2\overset{i.i.d.}{\sim}f_Y(\cdot)$ and $X_1\sim f_X(\cdot)$ for all $r\in [0,1]$. Here, $F_Y(\cdot)$ and $f_X(\cdot)$ are probability densities, and we will write $F_Y(\cdot)$ and $F_X(\cdot)$ to denote the corresponding distribution functions. We will write $g_{Y|X}(\cdot)$ to denote the conditional density of $Y|X$ under $g_{X,Y}(\cdot)$.  Therefore, 
	\begin{align*}
		\mathcal{J}&=\int_x \int_t \left(\int_{-\infty}^t \big((1-r)f_Y(y)+r g_{Y|X=x}(y)\big)\,dy\right)^2 f_Y(t) f_X(x)\,dt\,dx\\ &=\int_t F_Y^2(t)f_Y(t)+2r\int_t \int_{-\infty}^t \left(\int_x (g_{Y|X=x}(y)-f_Y(y))f_X(x)\,dx\right)f_Y(t)\,dy\,dt\\ &+r^2\int_x\int_t\left(\int_{-\infty}^t (g_{Y|X=x}(y)-f_Y(y))\,dy\right)^2 f_Y(t) f_X(x)\,dt\,dx\\ &=\frac{1}{3}+r^2\int_x\int_t\left(\int_{-\infty}^t (g_{Y|X=x}(y)-f_Y(y))\,dy\right)^2 f_Y(t) f_X(x)\,dt\,dx,
	\end{align*}
	which is clearly a strictly increasing function of $r$.
	
	\vspace{0.1in}
	
	\emph{Part (3).} We write $Y_2=g(X_2)+\sigma Z_2$, $Z_2\sim\mathcal{N}(0,1)$. Further, let $\tilde{F}_X(\cdot)$ be the probability distribution function of the random variable $g(X_2)-g(X_1)$. Observe that $\tilde{F}_X(\cdot)$ is symmetric around $0$, in the sense that $\tilde{F}_X(t)=1-\tilde{F}_X(-t)$ for all $t$. By simple computations, we then have:
	\begin{align*}
		\mathcal{J}&=\E\Phi^2\left(Z_2+\frac{g(X_2)-g(X_1)}{\sigma}\right)\\ &=\int_z \int_x \Phi^2\left(z+\frac{x}{\sigma}\right)\,\phi(z)\,dz \,d \tilde{F}_X(x)
	\end{align*}
	Let us define $\sigma_0:=\sigma^{-1}$ and take derivative of $\mathcal{J}$ with respect to $\sigma_0$, to get:
	\begin{align}\label{eq:ximon31}
		\frac{d}{d\sigma_0}\mathcal{J}&=2\int_z\int_{x} \Phi(z+\sigma_0 x)\phi(z+\sigma_0 x)\phi(z)x\,dz\,d\tilde{F}_X(x)\nonumber \\ &=2\int_z\int_{x>0} \Phi(z)\phi(z)(x \phi(z-\sigma_0 x)-x \phi(z+\sigma_0 x))\,dz \,d\tilde{F}_X(x)\nonumber \\ &=2\int_{z>0}\int_{x>0} (\Phi(z)-\Phi(-z))\phi(z)x(\phi(z-\sigma_0 x)-\phi(z+\sigma_0 x))\,dz \,d\tilde{F}_X(x). 
	\end{align}
	
	Note that for $z>0$, we have
	$$\Phi(z)>\Phi(-z)$$
	and for $x,z>0$, we have:
	$$\phi(z-\sigma_0 x)-\phi(z+\sigma_0 x)=\exp\left(-\frac{1}{2}\left(z^2+\sigma_0^2 x^2\right)\right)\left(\exp(\sigma_0 zx)-\exp(-\sigma_0 zx)\right)>0.$$
	Combining the two observations above, with~\eqref{eq:ximon31}, we get:
	$$\frac{d}{d\sigma_0}\mathcal{J}>0$$
	which implies $\mathcal{J}$ is a strictly increasing function of $\sigma_0$ and consequently a strictly decreasing function of $\sigma$.
	
	\subsection{Proof of Theorem 4.1}% in the main paper~\cite{ADN21}}
	
	\emph{Part (i).} Consider $(X_1',Y_1'),\ldots ,(X_n',Y_n')\overset{i.i.d.}{\sim}f_{X,Y}^{(n)}$. Let $\xi_n^{i}$ be Chatterjee's correlation coefficient defined with $(X_i,Y_i)$ replaced by $(X_i',Y_i')$. By the same argument as in~\cite[Lemma 9.11]{sc_corr}, $|\xi_n-\xi_n^i|\leq 6n^{-1}$. We apply the bounded differences inequality~\cite{Mcdiarmid1989} to get:
	$$\P(|\xi_n-\E\xi_n|\geq t)\leq 2\exp\left(-\frac{nt^2}{18}\right).$$
	Also by Theorem 2.1, there exists $N_0$ depending only on $\Gamma=(C,\eta,\theta,\gamma)$ from Assumptions (A1) and (A2) in the main paper, such that
	$$\sqrt{n}|\E\xi_n-\xj|\leq \frac{1}{2}$$
	for all $n\geq N_0$. Define $K:= 1+5\sqrt{\log{(2/\alpha)}}$. Combining the two displays above, under $\mathrm{H}_0$, we get:
	$$\P_{\mathrm{H}_0}(\sqrt{n}|\xi_n-\xi_0|\geq K)\leq \P_{\mathrm{H}_0}(\sqrt{n}|\xi_n-\E_{\mathrm{H}_0}\xi_n|\geq 5\sqrt{\log{(2/\alpha)}})\leq \alpha$$
	for all $n\geq N_0$. 
	
	Next note that, by the triangle inequality, for all $f_{X,Y}^{(n)}\in \hloc$, we have:
	$$\sqrt{n}|\xi_n-\xi_0|\geq \sqrt{n}c_n-\sqrt{n}|\xi_n-\E_{\mathrm{H}_{1,n}}\xi_n|-\sqrt{n}|\E_{\mathrm{H}_{1,n}}\xi_n-\xi_0|.$$
	As $\sqrt{n}c_n\to\infty$ and the other two terms are $O_p(1)$ and $O(1)$ from the preceding displays, we have 
	$$
	\beta_{\phi_n}(\hloc)\to 1\qquad \mbox{as}\qquad n\to\infty.
	$$
	
	\vspace{0.1in}
	
	\emph{Part (ii).} The proof of this result will use Le Cam's two-point method; see~\cite[Chapter 2]{Tsybakov2009}. Towards this direction, let $f_{X,Y}(\cdot,\cdot)$ be a joint distribution on $\R^2$, with marginals $f_X(\cdot)$ and $f_Y(\cdot)$ such that the following conditions hold:
	\begin{itemize}
		\item $\xi(f_{X,Y})=\xi_0$.
		\item $f_{X,Y}(\cdot,\cdot)$ satisfies Assumptions (A1) and (A2) in the main paper with paramers $C,\eta,\theta,\gamma$ given in the problem statement.
		\item $f_{X,Y}(\cdot,\cdot)$ is compactly supported on $[-1,1]^2$ and is uniformly upper and lower bounded on $[-1,1]^2$.
	\end{itemize}
	This can be easily ensured by choosing $f_{X,Y}(\cdot,\cdot)$ to be a truncated bivariate Gaussian with appropriate parameters depending on $C,\eta,\theta,\gamma$. Fix $\{r_n\}_{n\geq 1}$ and define
	$$f_{X,Y}^{(n)}(\cdot,\cdot)=(1-r_n)f_{X,Y}(\cdot,\cdot)+r_n f_X(\cdot)f_Y(\cdot),$$
	where $r_n\in (0,1)$. Note that $f_{X,Y}^{(n)}$ also satisfies Assumptions (A1) and (A2) with the same parameters. It then suffices to show the following:
	\begin{itemize}
		\item[(1).] For all $n$ large enough and some $c_1>0$, we have $\Big|\xi(f_{X,Y}^{(n)})-\xi_0\Big|\geq c_1 r_n$.
		\item[(2).] For all $n$ large enough and some $c_2>0$, we have $\mathrm{KL}\Big(\otimes_n f_{X,Y}^{(n)}|| \otimes_n f_{X,Y}\Big)\leq c_2 n r_n^2$. Here $KL(p||q)$ denotes the Kullback-Leibler divergence between probability measures $p$ and $q$, and $\otimes_n$ denotes the $n$-fold product measure.
	\end{itemize}
	
	\emph{Proof of 1.} Let $F^{(n)}_{Y|X}(\cdot|X)$ and $F_{Y|X}(\cdot|X)$ denote the conditional distribution functions of $Y|X$ under $f_{X,Y}^{(n)}$ and $f_{X,Y}$ respectively. Also, let $F_Y(\cdot)$ be the distribution function of $Y$. Then the following holds:
	\begin{align}\label{eq:lowerbd1}
		&~\xi(f_{X,Y}^{(n)})\\
		=&~6\int \E[F^{(n)}_{Y|X}(t|X)]^2 f_Y(t)\,dt-2\nonumber\\ 
		=&~6\int \E[F_{Y|X}(t|X)+r_n(F_Y(t)-F_{Y|X}(t|X))]^2 f_Y(t)\,dt-2\nonumber\\ =&~\xi_0+2r_n\underbrace{\int \left(\E[F_Y(t)]^2-\E[F_{Y|X}(t|X)]^2\right)f_Y(t)\,dt}_{D}+r_n^2\underbrace{\int \E\left(F_Y(t)-F_{Y|X}(t|X)\right)^2f_Y(t)\,dt}_{E}.
	\end{align}
	Note that, by the conditional version of Jensen's inequality
	$$\E[F_Y(t)]^2\leq \E[F_{Y|X}(t|X)]^2\qquad \mbox{for}\ \mbox{all}\ t\in\R.$$
	As $\xi_0>0$, this implies $X$ and $Y$ are not independent (see~\cite[Theorem 1.1]{sc_corr}) and consequently, the above display implies $D<0$. For the same reason $E>0$. By replacing $r_n$ by $c r_n$ for a small constant $c$ if necessary, we can ensure
	$$r_n\leq \frac{|D|}{E}.$$
	Combining the above display with~\eqref{eq:lowerbd1}, we get
	$$\big|\xi(f_{X,Y}^{(n)})-\xi_0|\geq 2r_n|D|-r_n^2 E \geq r_n |D|.$$
	This proves 1.
	
	\vspace{0.1in}
	
	\emph{Proof of 2.} As $\mathrm{KL}(\otimes_n p||\otimes_n q)= n \mathrm{KL}(p||q)$, it suffices to show that $\mathrm{KL}(f_{X,Y}^{(n)}||f_{X,Y})\leq r_n^2$. Towards this direction, note that
	\begin{align*}
		&\;\;\;\;\mathrm{KL}(f_{X,Y}^{(n)}||f_{X,Y})\\ &=\int \left[f_{X,Y}(x,y)+r_n(f_X(x)f_Y(y)-f_{X,Y}(x,y))\right]\log{\frac{f_{X,Y}(x,y)+r_n(f_X(x)f_Y(y)-f_{X,Y}(x,y))}{f_{X,Y}(x,y)}}\,dx\,dy\\ &\overset{(a)}{=}\int \left[f_{X,Y}(x,y)+r_n(f_X(x)f_Y(y)-f_{X,Y}(x,y))\right]\left(r_n\frac{f_X(x)f_Y(y)-f_{X,Y}(x,y)}{f_{X,Y}(x,y)}\right)+\mathcal{O}(r_n^2)\\ &\leq c_2 r_n^2,
	\end{align*}
	where (a) follows from a Taylor Series expansion and $c_2$ depends on the parameters of $f_{X,Y}(\cdot,\cdot)$. This proves 2.
	
	\section{Proofs of Theorems~A.1 and~A.2}\label{sec:thmA1A2}
	\subsection{Proof of Theorem A.1}\label{sec:pfstarvar}We consider a triangular array $(X_{n,1},Y_{n,1}),\dots,(X_{n,n},Y_{n,n})$ from a bivariate density $\jtd^{(n)}(\cdot,\cdot)$. We shall show that 
	\begin{align*}
		\max&\left\{\abs*{n\Var(\xi_n^*)-\frac{2}{5}},\,
		\abs*{n\Var(\xi_n)-\frac{2}{5}},\,
		\abs*{n\Cov(\xi_n^*,\xi_n)-\frac{2}{5}}\right\}\\
		\lesssim &\dfrac{1}{n}+\xi(f^{(n)}_{X,Y})+\sqrt{n\log n}\,b_n\ind(\xi(f^{(n)}_{X,Y}) > 0)\numberthis\label{eq:xi_varcovs}
	\end{align*}
	for $b_n:=n^{-\frac{\gamma}{\gamma+1}}(\log{n})^2+\left(\dfrac{(\log{n})^2}{n}\right)^{\left(\frac{\gamma(\theta-1)}{\theta(\gamma+1)}\wedge \frac{\eta \gamma}{\gamma+1}\right)}$.
	
	By the standard Glivenko-Cantelli Theorem, we know that $\hat{F}_n(\cdot)$ and $F^{(n)}_Y(\cdot)$ are ``close" almost surely in the $L^{\infty}$ norm. This motivates the definition of an oracle version of $\xi_n$ (see Section 2 in the main paper) as follows:
	\begin{equation}\label{eq:proj_defn} 
		\xi_n^*:=\dfrac{3n}{n^2-1}\left(\dfrac{1}{n}\underset{i\neq j}{\sum\sum }\abs*{\F(Y_{n,i})-\F(Y_{n,j})}-\sum_{i=1}^n\abs*{F^{(n)}_Y(Y_{n,(i+1)})-F^{(n)}_Y(Y_{n,(i)})}\right).
	\end{equation}
	Intuitively, of course, $\xi_n^*$ is mathematically more tractable than $\xi_n$ as it replaces the random function $\hat{F}_n(\cdot)$ by the deterministic function $F^{(n)}_Y(\cdot)$. 
	
	Let $N(i)\in \{j\in[n]:\mathrm{Rank}(X_{n,j})=\mathrm{Rank}(X_{n,i})+1\}$ be the unique index $j$ such that $X_{n,j}$ is immediately to the right of $X_{n,i}$ when $X_{n,i}$'s are arranged in increasing order. If there are no such indices for some $i$, set the corresponding $N(i)=1$. Let us define
	\begin{align*}\label{eq:xiprime}
		\xi'_n:=&\dfrac{6}{n^2-1}\disp \sum_{i=1}^{n-1}\min\{R_{i}, R_{N(i)}\}, 		\\
		\xi^{*\prime }_n:=&\dfrac{6n}{n^2-1}\left(\sum_{i=1}^n\min\{\F(Y_{n,i}),\F(Y_{n,N(i)})\}-\dfrac{1}{n}\underset{i\neq j}{\sum\sum }\min\{\F(Y_{n,i}),\F(Y_{n,j})\}\right). \numberthis
	\end{align*}
	Here and in the rest of the supplement, we remove the subscript $n$ in $R_{n,i}$ and write $R_i$ for notational convenience.
	
	Since $\{R_i\}$ and $\{R_{N(i)}\}$ form two permutations of $[n]$, we have that $\sum R_i=\sum R_{N(i)}=n(n-1)/2$. Then using the simple identity $|a-b|=a+b-2\min\{a,b\}$, one can check that
	$$n\E(\xi_n^*-\xi^{*\prime}_n)^2\lesssim n^{-1}\quad \text{and}\quad n\E(\xi_n-\xi'_n)^2\lesssim n^{-1}.$$
	Therefore it suffices to prove \eqref{eq:xi_varcovs} with $\xi'_n$ and $\xi^{\stp}_n$ instead of $\xi_n$ and $\xi_n^*$ respectively. Define $\bX^{(n)}:=(X_{n,1},\ldots ,X_{n,n})$. Conditioning on $\bX^{(n)}$, we have
	$$\begin{aligned}
		\Var(\xi_n^{\stp})=\EE(\Var(\xi_n^{\stp}|\bX^{(n)}))+\Var(\EE(\xi_n^{\stp}|\bX^{(n)})),&\quad \Var(\xi_n')=\EE(\Var(\xi_n'|\bX^{(n)}))+\Var(\EE(\xi_n'|\bX^{(n)}))\\
		\Cov(\xi_n',\xi_n^{\stp})=\EE(\Cov(\xi_n',\xi_n^{\stp}|\bX^{(n)}))+&\Cov(\EE(\xi_n'|\bX^{(n)}),\EE(\xi_n^{\stp})|\bX^{(n)}).
	\end{aligned}
	$$
	
	We now decompose each of the six terms on the right hand side, starting with the three expectation of covariance terms. To analyze the first term, that is, $\EE(\Var(\xi_n^{\stp}|\bX^{(n)}))$, let us introduce the following notations.
	\begin{align*}
		T_1^*:=&\frac{1}{n}\sum_{i}\mathrm{Var}(\min\{\F(Y_{n,i}),\F(Y_{n,N(i)})\}|\bX^{(n)})\\
		T_2^*:=&\frac{1}{n}\sum_{i}\mbox{Cov}(\min\{\F(Y_{n,i}),\F(Y_{n,N(i)})\},\min\{\F(Y_{n,N(i)}),\F(Y_{n,N(N(i))})\}|\bX^{(n)})\\
		T_3^*:=&\frac{1}{n^3}\underset{(i,j,l)\ \text{distinct}}{\sum\sum\sum}\mbox{Cov}(\min\{\F(Y_{n,i}),\F(Y_{n,j})\},\min\{\F(Y_{n,i}),\F(Y_{n,l})\}|\bX^{(n)})\\
		T_4^*:=&\frac{1}{n^2}\underset{(i,j,N(i))\ \text{distinct}}{\sum\sum\sum}\mbox{Cov}(\min\{\F(Y_{n,i}),\F(Y_{n,j})\},\min\{\F(Y_{n,i}),\F(Y_{n,N(i)})\}|\bX^{(n)})\\
		T_5^*:=&\frac{1}{n^2}\underset{(i,j,N(i))\ \text{distinct}}{\sum\sum\sum}\mbox{Cov}(\min\{\F(Y_{n,N(i)}),\F(Y_{n,j})\},\min\{\F(Y_{n,i}),\F(Y_{n,N(i)})\}|\bX^{(n)})\\
		T_6^*:=&\frac{1}{n^3}\underset{(i,j,k,l)\ \text{distinct}}{\sum\sum\sum\sum}\mbox{Cov}(\min\{\F(Y_{n,i}),\F(Y_{n,j})\},\min\{\F(Y_{n,k}),\F(Y_{n,l})\}|\bX^{(n)})\\
		T_7^*:=&\frac{1}{n^2}\underset{(i,j,k,N(k))\ \text{distinct}}{\sum\sum\sum\sum}\mbox{Cov}(\min\{\F(Y_{n,i}),\F(Y_{n,j})\},\min\{\F(Y_{n,k}),\F(Y_{n,N(k)})\}|\bX^{(n)})\\
		T_8^*:=&\frac{1}{n^3}\underset{(i,j,N(i),N(j))\ \text{distinct}}{\sum\sum\sum\sum}\mbox{Cov}(\min\{\F(Y_{n,i}),\F(Y_{n,N(i)})\},\min\{\F(Y_{n,j}),\\ &\qquad\qquad\qquad\qquad\qquad\F(Y_{n,N(j)})\}|\bX^{(n)})
		\numberthis\label{eq:evxistr}	
	\end{align*}
	Let us define
	\begin{equation}\label{eq:def-Tj-ind}
		T_{j,{\rm ind}}^*:=
		\text{the value of }
		T_j^*
		\text{ when }
		X,\,Y
		\text{ are independent },
	\end{equation}
	and the corresponding deviation terms:
	\begin{equation}\label{eq:def-Qj}
		Q_j^*:=T_j^*-T_{j,\rm ind}^*.
	\end{equation}
	Through an explicit calculation of the $T_{j,{\rm ind}}$'s, it follows that
	\begin{align*}
		T_1^*= \dfrac{1}{18}+Q_1^*, & \quad T_2^*=\dfrac{1}{45}+Q_2^*,\\
		T_3^*=\dfrac{1}{45}+Q_3^*, & \quad T_4^*=\dfrac{1}{45}+Q_4^*,\\
		T_5^*=\dfrac{1}{45}+Q_5^*, & \quad T_6^*=0,\\
		T_7^*=0,  & \quad T_8^*=0,
	\end{align*}
	where the last three equations follow once again by the fact that conditional on $\bX^{(n)}$ the random variables $Y_{n,1},\dots,Y_{n,n}$ are independent. Now, we observe that 
	\begin{align}\label{eq:convar1}
		\frac{(n^2-1)^2}{36n^3}\mathrm{Var}(\xi^{\stp}_n|\bX^{(n)})=T_1^*+2T_2^*+4T_3^*-4T_4^*-4T_5^*+T_6^*-2T_7^*+T_8^*+O(n^{-1}).
	\end{align}
	Hence, to bound the absolute value of $\frac{(n^2-1)^2}{36n^3}\mathrm{Var}(\xi^{\stp}_n|\bX^{(n)})$, we need to bound the $Q^*_i$s. To bound the $Q_i^*$'s stochastically, we shall use the following lemma, which is proved in~\cref{sec:pfevxsterr}.
	\begin{lemma}\label{lem:evxsterr}
		$\max\{\EE|Q_1^*|\,,\dots,\,\EE|Q_5^*|\}\lesssim \,\, n^{-1}+\xj 
		+b_n\ind(\xj>0).$
	\end{lemma}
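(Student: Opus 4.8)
The plan is to treat $Q_1^*,\dots,Q_5^*$ by a common three-step scheme. Write $F:=F_Y^{(n)}$ and $p_x(u):=\P(F(Y_{n,1})>u\mid X_{n,1}=x)$. Each $Q_k^*$ is an average over $i\in[n]$ of a \emph{conditional} variance or covariance, given $\bX^{(n)}$, of minima of the $F(Y_{n,\cdot})$'s, minus the constant it would equal under independence. The structural point is that conditionally on $\bX^{(n)}$ the variables $F(Y_{n,1}),\dots,F(Y_{n,n})$ are independent, with $F(Y_{n,j})$ distributed as $F(Y_{n,1})$ is given $X_{n,1}=X_{n,j}$, so each conditional (co)variance is an explicit \emph{deterministic} function of the relevant $X_{n,\cdot}$'s. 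For instance $Q_1^*=\tfrac1n\sum_i v(X_{n,i},X_{n,N(i)})-\tfrac1{18}$ with $v(x_1,x_2)=\int_0^1 2u\,p_{x_1}(u)p_{x_2}(u)\,du-\big(\int_0^1 p_{x_1}(u)p_{x_2}(u)\,du\big)^2$, while $Q_3^*$ is, up to $O(n^{-1})$, a bounded-kernel degree-three $U$-statistic in $X_{n,1},\dots,X_{n,n}$ built from triple products of the $p_x$'s, minus $\tfrac1{45}$. The three steps are: (i) decouple the nearest-neighbor indices; (ii) reduce to a bounded-kernel $U$-statistic or plain average in i.i.d.\ variables; (iii) Taylor-expand around the independent case. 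I carry the scheme out on $Q_1^*$ (one neighbor, hence one decoupling); $Q_2^*$ (a chain $i\to N(i)\to N(N(i))$) needs two decouplings, $Q_4^*,Q_5^*$ (triples $(i,j,N(i))$) one, and $Q_3^*$ (a triple of free indices) none, all being otherwise identical.

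\emph{Step (i): decoupling.} I would replace each ``neighbor conditional'' $p_{X_{n,N(i)}}$ by $p_{X_{n,i}}$. Since $p_x(u)=\P(Y_{n,1}>F^{-1}(u)\mid X_{n,1}=x)$, \cref{as:Lipschitz} gives $|p_{x_1}(u)-p_{x_2}(u)|\le\big(1+L_1^{(n)}(x_1,F^{-1}(u))+L_2^{(n)}(x_2,F^{-1}(u))\big)|x_1-x_2|^\eta$, and since $v$ is Lipschitz in each slot with a uniformly bounded multiplier (everything lies in $[0,1]$), $|v(X_{n,i},X_{n,N(i)})-v(X_{n,i},X_{n,i})|\lesssim|X_{n,i}-X_{n,N(i)}|^\eta(1+\ell_1(X_{n,i})+\ell_2(X_{n,N(i)}))$ with $\ell_j(x):=\int L_j^{(n)}(x,t)\,f_Y^{(n)}(t)\,dt$. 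When $\xj=0$ one has $p_x(u)=1-u$ for a.e.\ $x$, so this error vanishes identically, which explains the factor $\ind(\xj>0)$. Summing over $i$, the $|X_{n,i}-X_{n,N(i)}|$ become the order-statistic gaps $|X_{n,(k+1)}-X_{n,(k)}|$ (up to one boundary index, costing $O(n^{-1})$); taking expectations and using Hölder with exponents $\theta$ and $\theta/(\theta-1)$ separates the spacing moments from $\E[\ell_j(X_{n,1})^\theta]$, which is $\le C$ for large $n$ by Jensen and the second bound in~\cref{as:Lipschitz}. Inserting the standard stochastic bounds on $\tfrac1n\sum_k|X_{n,(k+1)}-X_{n,(k)}|^{s}$ valid under~\cref{as:nn_dists} (collected in~\cref{sec:lemmas}) and optimizing a bulk/tail truncation of the sample, this produces exactly the rate $b_n$ of~\eqref{eq:defbn}.

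\emph{Steps (ii)--(iii): reduction and expansion around independence.} After Step (i), it remains to bound (up to $O(n^{-1})$) the quantities $\tfrac1n\sum_i g(X_{n,i})-\tfrac1{18}$, where $g(x):=v(x,x)$, for $Q_1^*$, and, for $Q_3^*$, a bounded symmetric degree-three $U$-statistic in $X_{n,1},\dots,X_{n,n}$ minus $\tfrac1{45}$. Writing $p_x(u)=(1-u)+\delta_x(u)$, the two facts that drive the argument are $\E[\delta_X(u)]=0$ for every $u$ and the identity $\int_0^1\E[\delta_X(u)^2]\,du=\xj/6$, which is just the change of variables $u=F(t)$ applied to the definition~\eqref{eq:popdef} of $\xi(\cdot)$ (together with $|\delta_x(u)|\le1$). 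Expanding $g$ and the $U$-statistic kernel in powers of $\delta$, the terms linear in $\delta$ have zero expectation in $X$, so $\E[g(X)]=\tfrac1{18}+O(\xj)$ and the $U$-statistic has mean $\tfrac1{45}+O(\xj)$; the quadratic terms give $\mathrm{Var}(g(X))\lesssim\xj$ and, for $Q_3^*$, a projection variance $\mathrm{Var}(\E[\text{kernel}\mid X_1])\lesssim\xj$, so a Hoeffding decomposition bounds the centered $U$-statistic in $L^1$ by $O(\sqrt{\xj/n}+n^{-1})$. Combining via the triangle inequality and $\E|Z|\le\sqrt{\mathrm{Var}(Z)}$ for centered $Z$, one gets $\E|Q_1^*|\lesssim\sqrt{\xj/n}+\xj+n^{-1}$ and likewise for $Q_3^*$; since $\sqrt{\xj/n}\le\tfrac12(\xj+n^{-1})$, adding the Step (i) contribution gives the claim.

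\emph{Where the difficulty lies.} The delicate step is (i). Because $X_{n,1}$ is only assumed to have a finite $\gamma$-th moment, the gaps $|X_{n,(k+1)}-X_{n,(k)}|$ are not uniformly of order $\log n/n$, and the tail gaps must be handled by truncating the sample at a level $M_n$ and balancing the bulk contribution (a power of $\log n/n$) against the tail contribution (governed by $\E|X_{n,1}|^\gamma$); it is this optimization, interacting with the Hölder conjugate exponent $\theta/(\theta-1)$ and the smoothness exponent $\eta$, that yields the exponent $\tfrac{\gamma(\theta-1)}{\theta(\gamma+1)}\wedge\tfrac{\eta\gamma}{\gamma+1}$ appearing in $b_n$. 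Steps (ii)--(iii) are routine once the identity $\int_0^1\E[\delta_X(u)^2]\,du=\xj/6$ is available.
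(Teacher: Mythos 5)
Your proposal is correct and follows essentially the same route as the paper's proof: a neighbor-decoupling step (replacing $X_{n,N(i)}$ by $X_{n,i}$) controlled via \cref{as:Lipschitz}, H\"older's inequality and the spacing bounds of \cref{lem:nndist} to produce $b_n\ind(\xj>0)$, followed by an expansion around independence in which the terms linear in $f_{Y|X}-f_Y$ vanish upon integrating over $X$ and the quadratic terms are bounded by $\xj$ via Cauchy--Schwarz (your identity $\int_0^1\E[\delta_X(u)^2]\,du=\xj/6$ is exactly the mechanism behind the paper's bounds on $S_3,\dots,S_{10}$). The only difference is cosmetic: your $\delta_x(u)$ notation unifies the paper's term-by-term computation, and your additional Hoeffding-type $L^1$ fluctuation bound of order $\sqrt{\xj/n}$ (absorbed into $\xj+n^{-1}$) addresses the literal $\E|Q_k^*|$ statement, whereas the paper in effect only bounds $|\E Q_k^*|$, which is all that is used downstream.
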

	Using~\cref{lem:evxsterr}, adding the equations in~\eqref{eq:evxistr} and taking expectation over $\bX^{(n)}$, we have
	\begin{align*}
		&\abs*{n\EE\Var(\xi_n^{\stp}|\bX^{(n)})-\dfrac{2}{5}}\\
		=&
		\abs*{36\,\EE(T_1^*+2T_2^*+4T_3^*-4T_4^*-4T_5^*+T_6^*-2T_7^*+T_8^*)-\dfrac{2}{5}} +O(n^{-1})\\
		=& 36\,\abs*{\EE(Q_1^*+2Q_2^*+4Q_3^*-4Q_4^*-4Q_5^*)}+O(n^{-1})\\
		\lesssim & n^{-1}+\xj +b_n\ind(\xj>0).	\numberthis\label{eq:evxistest}
	\end{align*}
	
	Moving on to $\EE(\Var(\xi_n^{\stp}|\bX^{(n)}))$ we notice that
	$$
	\min\{R_i,R_{N(i)}\}=\sum_{k=1}^n\ind(Y_{n,k}\le \min\{Y_{n,i},Y_{n,N(i)}\})=1+\sum_{k\neq i,N(i)}\ind(Y_{n,k}\le \min\{Y_{n,i},Y_{n,N(i)}\}).
	$$
	Let us now define the following notation.
	\begin{align*}
		T_1:=&\dfrac{n}{(n^2-1)^2}\sum_{i=1}^{n-1}\Var(\min\{R_i,R_{N(i)}|\bX^{(n)}\})	\\
		=&\PP(Y_{n,1}\le \min\{Y_{n,3},Y_{n,4}\},Y_{n,2}\le \min\{Y_{n,3},Y_{n,4}\})\\
		&-\PP(Y_{n,1}\le \min\{Y_{n,3},Y_{n,4}\})\PP(Y_{n,2}\le \min\{Y_{n,3},Y_{n,4}\})+Q_1	\\
		=&\frac{1}{18}+Q_1, \numberthis\label{eq:evxi1}\\
		T_2:=&\dfrac{n}{(n^2-1)^2}\sum_{i=1}^{n-1} \Cov(\min\{R_i,R_{N(i)}\},\,\min\{R_{N(i)},R_{N(N(i))}\}|\bX^{(n)}) \\
		=& \PP(Y_{n,1}\le\min\{Y_{n,3},Y_{n,4}\},Y_{n,2}\le\min\{Y_{n,4},Y_{n,5}\})\\
		&-\PP(Y_1\le \min\{Y_3,Y_4\})\PP(Y_2\le \min\{Y_4,Y_5\})+Q_2 \\
		=&\dfrac{1}{45}+Q_2, \numberthis\label{eq:evxi2}\\
		T_3:=& \dfrac{n}{(n^2-1)^2}\sum_{i}\sum_{j\neq i}\sum_{\underset{N(i),N(j)}{l\neq i, j,}}\mbox{Cov}(\ind(Y_{n,l}\le \min\{Y_{n,i},Y_{n,N(i)}\}),\ind(Y_{n,l}\le \min\{Y_{n,j},Y_{n,N(j)}\})|\bX^{(n)}) \\
		=& \PP(Y_{n,1}\le\min\{Y_{n,2},Y_{n,3}\},Y_{n,1}\le\min\{Y_{n,4},Y_{n,5}\})\\
		&-\PP(Y_{n,1}\le \min\{Y_{n,2},Y_{n,3}\})\PP(Y_{n,1}\le \min\{Y_{n,4},Y_{n,5}\})+Q_3 \\
		=& \dfrac{4}{45}+Q_3, \numberthis\label{eq:evxi3}\\
		T_4:=& \dfrac{n}{(n^2-1)^2}\sum_{i}\sum_{j\neq i}\sum_{l\neq  j,N(j)} \Cov(\ind(Y_{n,j}\le \min\{Y_{n,i},Y_{n,N(i)}\}),\ind(Y_{n,l}\le \min\{Y_{n,j},Y_{n,N(j)}\})|\bX^{(n)}) \\
		=& \PP(Y_{n,1}\le \min\{Y_{n,2},Y_{n,3}\},Y_{n,4}\le\min\{Y_{n,1},Y_{n,5}\})\\
		&-\PP(Y_{n,1}\le \min\{Y_{n,2},Y_{n,3}\})\PP(Y_{n,4}\le\min\{Y_{n,1},Y_{n,5}\})+Q_4, \\
		=& -\dfrac{2}{45}+Q_4.	\numberthis\label{eq:evxi4}	\\	
		T_5:= & \dfrac{n}{(n^2-1)^2}\\
		\times& \underset{i,j,k,l,N(i),N(j)\,\text{distinct}}{\sum_i\sum_j\sum_k\sum_l}\Cov(\ind(Y_{n,k}\le \min\{Y_{n,i},Y_{n,N(i)}\}),\ind(Y_{n,l}\le \min\{Y_{n,j},Y_{n,N(j)}\})|\bX^{(n)}) \\
		= &\, 0,		\numberthis\label{eq:evxi5}
	\end{align*}
	where $Q_j$'s are defined in a way similar to \eqref{eq:def-Tj-ind}-\eqref{eq:def-Qj}. More explicitly, let
	\begin{equation}\label{eq:def-Tj-ind-2}
		T_{j,{\rm ind}}
		\text{ be the value of }
		T_j
		\text{ when }
		X,\,Y
		\text{ are independent. }
	\end{equation}
	and
	\begin{equation}\label{eq:def-Qj-2}
		Q_j:=T_j-T_{j,\rm ind}.
	\end{equation}
	When conditioned on $\bX^{(n)}$, the variables $N(1),\ldots ,N(n)$ are measurable. Using this observation, some straightforward but tedious calculation then yields
	$$
	\dfrac{n}{36}\,\Var(\xi_n'|\bX^{(n)})=T_1+2T_2+T_3+4T_4+T_5+O(n^{-1}).
	$$ We have \eqref{eq:evxi5} because, conditioned on $\bX^{(n)}$, the random variables $Y_{n,1},\dots,Y_{n,n}$ are independent. To bound the $Q_i$'s stochastically, we shall use the following lemma, which is proved in~\cref{sec:pfevxierr}.
	\begin{lemma}\label{lem:evxierr}
		$
		\max\{\EE|Q_1|,\,\EE|Q_2|,\,\EE|Q_3|,\,\EE|Q_4|\}\lesssim \,\, n^{-1}+\xj 
		+b_n\ind(\xj>0).
		$
	\end{lemma}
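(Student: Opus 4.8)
The plan is to follow the blueprint of Lemma~\ref{lem:evxsterr}, adapted to the rank statistic $\xi_n'$. The starting point is the identity
\[
\min\{R_i,R_{N(i)}\}=1+\sum_{k\neq i,N(i)}\ind\bigl(Y_{n,k}\le\min\{Y_{n,i},Y_{n,N(i)}\}\bigr),
\]
which, substituted into the definitions of $T_1,\dots,T_4$, turns each conditional (co)variance into a double, triple or quadruple sum over index tuples of conditional covariances of indicators of the form $\ind(Y_{n,k}\le\min\{Y_{n,i},Y_{n,N(i)}\})$. Since conditional on $\bX^{(n)}$ the variables $Y_{n,1},\dots,Y_{n,n}$ are independent, such a covariance vanishes unless the two indicator events share a ``free'' index; discarding the vanishing terms, each $T_j$ becomes a normalized sum over a restricted family of tuples, and $Q_j$ is by definition the gap between that sum and the fixed four- or five-point probability displayed as its leading term in \eqref{eq:evxi1}--\eqref{eq:evxi4}.

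First I would split the surviving tuples into a \emph{degenerate} part --- those in which some index (including a nearest-neighbour image $N(\cdot)$) coincides with another --- and a \emph{generic} part, in which all of $i,j,k,l,N(i),N(j)$ are distinct. Because every vertex has $O(1)$ in- and out-degree in the nearest-neighbour graph $i\mapsto N(i)$, the degenerate tuples are an $O(1/n)$ fraction of all tuples, and since each conditional covariance is bounded by an absolute constant, the degenerate part contributes only $O(n^{-1})$ to $Q_j$; this furnishes the $n^{-1}$ term in the asserted bound.

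Next, for the generic tuples I would compare each conditional covariance with the value it would take if $Y_{n,1},\dots,Y_{n,n}$ were i.i.d.\ with the marginal law $\F$ --- which is exactly the leading probability expression in \eqref{eq:evxi1}--\eqref{eq:evxi4} --- and bound the discrepancy. Conditional on $\bX^{(n)}$, the law of $\F(Y_{n,i})$ is the conditional law of $\F(Y_{n,1})$ given $X_{n,1}=X_{n,i}$, so two error sources appear. (i) The collection of conditional laws $\{\P(Y_{n,1}\le\cdot\mid X_{n,1}=X_{n,i})\}_i$ differs from the marginal; since $\xi_n'$ is a rank statistic it is invariant under a common strictly increasing reparametrization of all the $Y$-values, so the conditional variance depends on these laws only through their \emph{spread} across $i$, and this spread enters at second order, contributing $O(\xj)$ after averaging over $i$ and identifying the quadratic term with the integrated squared deviation in \eqref{eq:popdef}. (ii) The concomitant $Y_{n,N(i)}$ has conditional law (given $\bX^{(n)}$) equal to that of $Y_{n,1}$ given $X_{n,1}=X_{n,N(i)}$, which by Assumption~\ref{as:Lipschitz} differs in CDF from the law given $X_{n,1}=X_{n,i}$ by at most $(1+L_1^{(n)}(X_{n,i},y)+L_2^{(n)}(X_{n,N(i)},y))\,|X_{n,i}-X_{n,N(i)}|^\eta$; taking expectations, separating the $L_i^{(n)}$-moments (finite by \eqref{eq:Lipschitz2}) from the averaged spacings $\tfrac1n\sum_i|X_{n,(i+1)}-X_{n,(i)}|^\eta$ via H\"{o}lder's inequality with exponents $\theta$ and $\gamma$, and invoking standard order-statistic spacing estimates under Assumption~\ref{as:nn_dists} (cf.\ \cite[Section 2.2]{Biau2015}), this contributes a term of order $b_n$. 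Since $\xj=0$ forces $X_{n,1}$ and $Y_{n,1}$ to be independent, whereupon $Y_{n,N(i)}$ already carries the marginal law and source (ii) is absent, that term appears with a factor $\ind(\xj>0)$. Adding (i), (ii) and the degenerate-term bound, and taking the maximum over $j=1,\dots,4$, yields $\max_j\EE|Q_j|\lesssim n^{-1}+\xj+b_n\ind(\xj>0)$.

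The hard part is source (ii): pinning down the exact exponent that appears in $b_n$ demands a careful trade-off between the H\"{o}lder exponent $\eta$, the integrability exponent $\theta$ of the constants $L_i^{(n)}$, and the moment exponent $\gamma$ of $X_{n,1}$, together with sharp bounds on $\tfrac1n\sum_i|X_{n,(i+1)}-X_{n,(i)}|^\eta$ and its moments; this is where essentially all of the technical work lies, and it is the same estimate that controls the corresponding term in Lemma~\ref{lem:evxsterr}.
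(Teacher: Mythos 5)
Your overall strategy coincides with the paper's: expand $\min\{R_i,R_{N(i)}\}=1+\sum_{k\neq i,N(i)}\ind(Y_{n,k}\le\min\{Y_{n,i},Y_{n,N(i)}\})$, use conditional independence of the $Y$'s given $\bX^{(n)}$ to discard most covariance terms, charge index coincidences to $O(n^{-1})$, charge the $X_{n,i}$-versus-$X_{n,N(i)}$ discrepancy to $b_n$ via \cref{as:Lipschitz}, H\"older's inequality and the spacing estimate of \cref{lem:nndist}, and note that independence under $\xj=0$ removes that contribution, which produces the indicator. Sources (ii) and the degenerate part are handled exactly as in the paper.

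The one step you have not actually justified is why your source (i) --- the deviation of the conditional laws $\P(Y_{n,1}\le\cdot\,|\,X_{n,1}=X_{n,i})$ from the marginal --- contributes $O(\xj)$ rather than $O(\sqrt{\xj})$. Rank invariance of $\xi_n'$ does not deliver this: after reducing to a uniform marginal the conditional laws still deviate from uniform, and a direct $L^1$ bound on that deviation only gives $\sqrt{\xj}$ by Cauchy--Schwarz, which is too weak for the stated lemma. What the paper actually does is write $f_{Y|X}(y|x)=f_Y(y)+(f_{Y|X}(y|x)-f_Y(y))$ and telescope the product of conditional densities (the $a^4-b^4$ expansion leading to \eqref{eq:E11_term1}); every term containing exactly one deviation factor vanishes identically because $\int(f_{Y|X}(y|x)-f_Y(y))f_X(x)\,dx=0$ and the relevant $X$-indices are independent, so only terms with at least two deviation factors attached to the \emph{same} $X$ survive, and those are bounded by $\xj$ via Cauchy--Schwarz and \eqref{eq:popdef}. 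It is this centering identity, not rank invariance, that kills the first-order term; without it your argument yields only $\sqrt{\xj}$ and the claimed bound does not follow.
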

	Using~\cref{lem:evxierr}, adding equations \eqref{eq:evxi1}-\eqref{eq:evxi5} and taking expectation over $\bX^{(n)}$ we have
	\begin{align*}
		\abs*{n\EE\Var(\xi_n'|\bX^{(n)})-\dfrac{2}{5}}=&
		\abs*{36\,\EE(T_1+2T_2+T_3+4T_4+T_5)-\dfrac{2}{5}} +O(n^{-1})\\
		=& 36\,\abs*{\EE(Q_1+2Q_2+Q_3+4Q_4)}		+O(n^{-1})\\
		\lesssim & n^{-1}+\xj +b_n\ind(\xj>0).	\numberthis\label{eq:evxiest}
	\end{align*}
	Similarly, we can also prove
	\begin{equation}\label{eq:ecov_est}
		\abs*{n\EE(\Cov(\xi_n',\xi_n^{\stp}|\bX^{(n)}))-\dfrac{2}{5}}\lesssim  n^{-1}+\xj +b_n\ind(\xj>0).
	\end{equation}
	We now move on to the `variance of expectation' terms. We begin with the following lemma which is proved in~\cref{sec:pfvarex}.
	\begin{lemma}\label{lem:varex}
		Suppose $\jtd^{(n)}(\cdot,\cdot)$ satisfies Assumptions (A1) and (A2) in the main paper. Then,
		$$\max\{\mathrm{Var}(\E[\sqrt{n}\xi'_n|\bX^{(n)}]),\,\mathrm{Var}(\E[\sqrt{n}\xi^{\stp}_n|\bX^{(n)}])\}\lesssim n^{-1}+\xj+\sqrt{n\log{n}}b_n\ind(\xj>0).$$
	\end{lemma}
	
	By the Cauchy-Schwarz inequality, the same upper bound holds for $\Cov(\EE(\xi'_n|\bX^{(n)}),\,\EE(\xi^{\stp}_n|\bX^{(n)}))$. Putting together equations \eqref{eq:evxiest}, \eqref{eq:evxistest}, \eqref{eq:ecov_est},  \cref{lem:varex} and finally \eqref{eq:xiprime}, the proof of \eqref{eq:xi_varcovs} is complete.
	\qed
	
	\medskip
	
	\subsection{Proof of Theorem A.2}\label{sec:pforacle_wass} Recall that $\bX^{(n)}=(X_{n,1},\ldots ,X_{n,n})$. Also, recall that $N(i)= \{j\in[n]:\mathrm{Rank}(X_{n,j})=\mathrm{Rank}(X_{n,i})+1\}$ is the unique index $j$ such that $X_{n,j}$ is immediately to the right of $X_{n,i}$ when the $X_{n,i}$'s are arranged in increasing order. If there are no such indices for some $i$, set the corresponding $N(i)=1$. We shall use the oracle statistic $\xi^*_n$ defined as
	\begin{align}\label{eq:adjust_to_mean}
		\xi_n^*=&\dfrac{3n}{n^2-1}\left(\dfrac{1}{n}\underset{i\neq j}{\sum\sum }\abs*{F_Y^{(n)}(Y_{n,i})-F_Y^{(n)}(Y_{n,j})}-\sum_{i=1}^n\abs*{F_Y^{(n)}(Y_{n,i})-F_Y^{(n)}(Y_{n,N(i)})}\right)\nonumber
		\\=&\dfrac{3}{n(n-1)}\underset{i\neq j}{\sum\sum }\abs*{F_Y^{(n)}(Y_{n,i})-F_Y^{(n)}(Y_{n,j})}-\dfrac{3}{n}\sum_{i=1}^n\abs*{F_Y^{(n)}(Y_{n,i})-F_Y^{(n)}(Y_{n,N(i)})}+O(n^{-1}).
	\end{align}
	%The remainder  $E_{1n}\le Cn^{-1},$ since $|F_Y^{(n)}(Y_i)-F_Y^{(n)}(Y_j)|\le 1$ for all $i,j.$ By similar calculation, $\EE\xi_n^*$ can be written as the expectation of the sum above, with a remainder of $E'_{1n}\le Cn^{-1}.$
	
	By the U-statistic projection theorem (see Theorem 12.3 of \cite{vaart}), 
	\begin{align*}
		&\sqrt{n}\,\left[\frac{1}{{n \choose 2}}\sum\limits_{i<j}\Big\{|F_Y^{(n)}(Y_{n,i})-F_Y^{(n)}(Y_{n,j})|-\mathbb{E}|F_Y^{(n)}(Y_{n,i})-F_Y^{(n)}(Y_{n,j})|\Big\}\right] \\
		=& \frac{2}{\sqrt{n}}\sum\limits_{i=1}^{n}h(Y_{n,i})+O(n^{-1/2})\numberthis\label{eq:u_stat}
	\end{align*}
	where %$B_n\le Cn^{-1/2}$ and 
	$h(y):= \mathbb{E}|F_Y^{(n)}(y)-U| - \mathbb{E}|U_1-U_2|$
	with $U,U_1,U_2$ being i.i.d $\mbox{Uniform}\,(0,1)$ random variables. Combining equations \eqref{eq:adjust_to_mean} and \eqref{eq:u_stat} we have
	\begin{align}\label{eq:cha_prep}
		&\sqrt{n}(\xi_n^*-\EE\xi_n^*)\nonumber
		\\=&\dfrac{3}{\sqrt{n}}\sum_{i=1}^n\Big\{2h(Y_{n,i})-|F_Y^{(n)}(Y_{n,i})-F_Y^{(n)}(Y_{n,N(i)})|+\mathbb{E}|F_Y^{(n)}(Y_{n,i})-F_Y^{(n)}(Y_{n,N(i)})|\Big\}+O(n^{-1/2})\nonumber
		\\=&\, S_n+O(n^{-1/2}),
	\end{align}
	where 
	\begin{equation}
		\label{eq:s_n}
		S_n=\dfrac{3}{\sqrt{n}}\displaystyle\sum_{i=1}^n\Big\{2h(Y_{n,i})-|F_Y^{(n)}(Y_{n,i})-F_Y^{(n)}(Y_{n,N(i)})|+\mathbb{E}|F_Y^{(n)}(Y_{n,i})-F_Y^{(n)}(Y_{n,N(i)})|\Big\}.
	\end{equation}
	In the rest of the proof, we will study the asymptotic distribution of $S_n$.
	Let us define $W_n:=S_n/\sqrt{\Var(S_n)}$ and $W^*_n:=\sqrt{n}(\xi_n^*-\EE\xi_n^*)/\sqrt{\Var(\xi_n^*)}$. Using the standard properties of the Wasserstein-1 distance (same as the Kantorovic-Wasserstein distance in Definition 1.1 of the main paper)  %It is clear that   $\EE(\sqrt{n}(A_n+A_n')+B_n)^2\le Cn^{-1}.$ 
	and \eqref{eq:cha_prep} we get
	\begin{equation}\label{eq:cha_prep_wass}
		\mD(W_n^*)\le \mD(W_n)+Cn^{-1/2}.
	\end{equation}
	Now we proceed to bound $\mD(W_n)$. Let us define
	\[
	\mathcal{M}:=\{(X_{n,1},Y_{n,1}),\cdots,(X_{n,n},Y_{n,n})\}\text{ and }\mathcal{M}^\prime := \{(X^\prime_{n,1},Y^\prime_{n,1}),\cdots,(X^\prime_{n,1},Y^\prime_{n,n})\},\]
	where $\mathcal{M}$ and $\mathcal{M}^\prime$ are independent and identically distributed. Let 
	\begin{align*}
		\mathcal{M}^{i}:=& \Big\{(X_{n,1},Y_{n,1}),\cdots,(X^\prime_{n,i},Y^\prime_{n,i}),\cdots,(X_{n,n},Y_{n,n})\Big\}\\
		\text{and}\quad\mathcal{M}^{ij}:=& \Big\{(X_{n,1},Y_{n,1}),\cdots,(X^\prime_{n,i},Y^\prime_{n,i}),\cdots,(X^\prime_{n,j},Y^\prime_{n,j}),\cdots,(X_{n,n},Y_{n,n})\Big\}.
	\end{align*}
	Let us define $W_\ell:(\mathbb{R}^2)^n \rightarrow \mathbb{R}$ as,
	\begin{equation}
		\label{eq:def_w_l}
		W_\ell((x_1,y_1),\cdots,(x_n,y_n)):=3\,[2h(y_{\ell})-|F_Y^{(n)}(y_{\ell})-F_Y^{(n)}(y_{N(\ell)})|+\mathbb{E}|F_Y^{(n)}(y_{\ell})-F_Y^{(n)}(y_{N(\ell)})|].\end{equation}
	
	By graphical rule on a measure space $\mathcal{X}^n$ we mean a mapping from $\mathcal{X}^n$ to the space of undirected graphs on $n$ vertices. For $\bm x \in \mathcal{X}^n$, the graphical rule at $\bm x$ will be denoted by $G(\bm x)$. Such graphical rules will be called symmetric if for any permutation $\pi$ on $[n]$, the edges in $G(x_{\pi(1)},\cdots,G_{\pi(n)})$ will precisely be $\{(\pi(i),\pi(j)): (i,j) \in \mathcal{E}(G(x))\}$. This definition is in line with Section 2.3 of \cite{chatt_nn}. As defined in Section 2.3 of \cite{chatt_nn}, we also consider the definition of a vector $\bm x \in \mathcal{X}^n$ being embedded in a vector $\bm y \in \mathcal{X}^m$ for $m>n$. For a function $f$ defined on $\mathcal{X}^n$, we shall call indices the $i$ and $j$ non interacting with respect to the triplet $(f,\bm x,\bm x^\prime)$ if 
	\[
	f(\bm x)-f(\bm x^j)=f(\bm x^i)-f(\bm x^{ij}),
	\]
	where 
	\[
	x^i_k=\begin{cases}x_k & \mbox{if $k \neq i$,} \\x^\prime_i & \mbox{if $k=i$,}
	\end{cases}
	\] 
	and
	\[
	x^{ij}_k=\begin{cases}x_k & \mbox{if $k \in \{i,j\}$} \\ x^\prime_k & \mbox{if $k \in \{i,j\}$.}
	\end{cases}
	\]
	A graphical rule is called an interaction rule for a function $f$ if the edge $(i,j)$ is absent in all of $G(\bm x),G(\bm x^i),G(\bm x^j)$ and $G(\bm x^{ij})$ implies the pair $(i,j)$ is non-interacting with respect to the triplet $(f,\bm x,\bm x^\prime)$. 
	
	Now let us consider the measure space $\mathcal{X}=\mathbb{R}^2$. For $\bm x \in \mathcal{X}^n$, let us define the following distance of $\{1,\cdots,n\}$.
	\begin{equation}
		\label{eq:def_d}
		D_{\bm x}(i,j) := \begin{cases}
			n+20 & \mbox{if $x_{i} > x_{j}$}\\ \#\Big\{\ell: x_{i} < x_{\ell} < x_{j}\Big\} & \mbox{if $x_{i} \le x_{j}$}.
		\end{cases}
	\end{equation}
	Given a configuration $\bm x \in \mathcal{X}^n$, let us define a graph $\mathcal{G}(\bm x)$ on $[n]$ as follows. For a pair $\{i,j\}$, there exist an edge between $i$ and $j$ if there exists an $\ell \in [n]$ such that,
	\[
	D_{\bm x}(\ell,i) \le 2 \quad \mbox{and} \quad D_{\bm x}(\ell,j) \le 2.
	\]
	Note that this graphical rule is a symmetric rule. The next lemma shows that this is an interaction rule for the function $W_\ell$ for all $1 \le \ell \le n$, and has been proved in~\cref{sec:pfinteraction_rule}.	
	
	\begin{lemma}\label{lem:interaction_rule}
		Consider the graphical rule $\mathcal{G}(\bm x)$ for $\bm x \in (\mathbb{R}^ 2)^n$, as defined above. For any pair of vertices $i,\,j$ if there exists no edge  $\{i,j\}$ in $\mathcal{G}(\bm x), \mathcal{G}(\bm x^{i}), \mathcal{G}(\bm x^{j})$ and $\mathcal{G}(\bm x^{ij})$, then for all $1 \le \ell \le n$, we have,
		\[
		W_\ell(\bm x)-W_\ell(\bm x^{i})-W_\ell(\bm x^{j})+W_\ell(\bm x^{ij})=0,
		\]
		where $W_\ell$ is defined in \eqref{eq:def_w_l}.
	\end{lemma}
	By Lemma \ref{lem:interaction_rule},  $\mathcal{G}$ is a symmetric graphical interaction rule. Let us define
	\[
	\Delta_j := S_n(\mathcal{M})-S_n(\mathcal{M}^j),
	\]
	where $S_n$ is defined in \eqref{eq:s_n}, and $S_n(\mathcal{M})$ is $S_n$ computed with dataset $\mathcal{M}$. Let us define
	\[
	M := \max\limits_{j}|\Delta_j|.
	\]
	Since $F_Y^{(n)}(y)\le 1$ for all $y$, we have a constant $C>0$ such that for all $j \in [n]$,
	\begin{equation}
		\label{eq:M_and_Delta}
		|\Delta_j| \le \frac{C}{\sqrt{n}} \quad \mbox{and} \quad M \le \frac{C}{\sqrt{n}}.
	\end{equation}
	Let us now construct a new graph $\mathcal{G}^\prime(\mathcal{M})$ on vertices $\{1,\cdots,n+4\}$ as follows. There exists an edge between the vertices $i$ and $j$ if and only if there exists an $\ell \in [n+4]$ such that,
	\[
	D_{\mathcal{M}}(\ell,i) \le 6 \quad \mbox{and} \quad D_{\mathcal{M}}(\ell,j) \le 6.
	\]
	Clearly all the edges of $\mathcal{G}(\mathcal{M})$ are in $\mathcal{G}^\prime(\mathcal{M})$, meaning that $\mathcal{G}(\mathcal{M})$ is embedded in $\mathcal{G}^\prime(\mathcal{M})$. Moreover, $\mathcal{G}^\prime(\mathcal{M})$ is a symmetric graphical rule. The degree of any vertex in $\mathcal{G}^\prime(\mathcal{M})$ is bounded by $14$ as if there exists an $\ell \in [n]$ such that,
	\[
	D_{\mathcal{M}}(\ell,i) \le 2 \quad \mbox{and} \quad D_{\mathcal{M}}(\ell,j) \le 2,
	\]
	then $i-7 \le j \le i+7$. Hence, almost surely
	\begin{equation}
		\label{eq:delta_1}
		\delta := 1+ \mbox{degree of vertex $1$ in $\mathcal{G}^\prime$} \le 15.
	\end{equation}
	Now, using Theorem 2.5 of \cite{chatt_nn}, we have an absolute constant $C_1>0$ such that
	\begin{equation}
		\mD(W_n) \le \frac{C_1 n^{1/2}}{\sigma^{*2}_n}\mathbb{E}(M^8)^{1/4}\mathbb{E}(\delta^4)^{1/4} + \frac{1}{2\sigma^{*3}_n}\sum\limits_{j=1}^{n}\mathbb{E}|\Delta_j|^3,
	\end{equation}
	where, 
	\[
	\sigma^{*2}_n=\var\left(\frac{1}{\sqrt{n}}\sum\limits_{\ell=1}^{n}W_\ell(\mathcal{M})\right),
	\]
	and $W_\ell(\mathcal{M})$ is the function $W_\ell$ computed with the dataset $\mathcal{M}$.
	The proof of Theorem A.2 is now completed by plugging in the bounds from  \eqref{eq:cha_prep}, \eqref{eq:M_and_Delta}, and \eqref{eq:delta_1}, into \eqref{eq:cha_prep_wass}.
	
	\qed
	
	\section{Proofs of Lemmas}\label{sec:lemmas}
	
	\subsection{Proof of Lemma \ref{lem:evxsterr}}\label{sec:pfevxsterr} To reduce notation, hide the dependence on $n$ and write $\jtd(\cdot,\cdot)$, $\mxd(\cdot)$, $\myd(\cdot)$ and $F_Y(\cdot)$ to mean $\jtd^{(n)}(\cdot,\cdot)$, $\mxd^{(n)}(\cdot)$, $\myd^{(n)}$ and $F_Y^{(n)}(\cdot)$ respectively. We will also write $(X_1,Y_1),\dots,(X_n,Y_n)$ to mean the random variables $(X_{n,1},Y_{n,1}),\ldots,(X_{n,n},Y_{n,n})\sim\jtd^{(n)}$ from the triangular array.
	
	We defer the steps for $T_1^*$ to the proof of \cref{lem:evxierr} and start with the possibly more complicated term $T_2^*$. Observe that for any $i,j$,
	\begin{align}\label{eq:usiden}
		\min\{F(Y_i),F(Y_j)\}=\int \ind(y\leq \min\{Y_i,Y_j\})\myd(y)\,dy.
	\end{align}  
	Using~\eqref{eq:usiden} in the definition of $T_2^*$ and writing $Q_{21}^*$ and $Q_{22}'^*$ for terms to be bounded at the end of the proof, we get:
	\begin{align}\label{eq:usiden1}
		\E T_2^*=&\frac{1}{n}\sum_{i=1}^n \E\int \ind(y_4\leq \min\{y_1,y_2\})\ind(y_5\leq \min\{y_2,y_3\})\myd(y_4)\myd(y_5)\jtdc(y_1|X_i)\nonumber\\ &\hspace{60pt}\jtdc(y_2|X_{N(i)})\jtdc(y_3|X_{N(N(i))})\,dy_1\,dy_2\,dy_3\,dy_4\,dy_5\nonumber \\ 
		-&\frac{1}{n}\sum_{i=1}^n \E\int \ind(y_5\leq \min\{y_1,y_2\})\ind(y_6\leq \min\{y_3,y_4\})\myd(y_5)\myd(y_6)\jtdc(y_1|X_i)\nonumber \\ &\hspace{60pt}\jtdc(y_2|X_{N(i)})\jtdc(y_3|X_{N(i)})\jtdc(y_4|X_{N(N(i))})\,dy_1\,dy_2\,dy_3\,dy_4\,dy_5\,dy_6\nonumber\\ 
		=&\,Q_{21}^*+\frac{1}{n}\sum_{i=1}^n \E\int \ind(y_4\leq \min\{y_1,y_2\})\ind(y_5\leq \min\{y_2,y_3\})\myd(y_4)\myd(y_5)\jtdc(y_1|X_i)\nonumber\\ 
		&\hspace{80pt}\jtdc (y_2|X_{i})\jtdc (y_3|X_{i})\,dy_1\,dy_2\,dy_3\,dy_4\,dy_5\nonumber\\ &\hspace{20pt}-\frac{1}{n}\sum_{i=1}^n \E\int \ind(y_5\leq \min\{y_1,y_2\})\ind(y_6\leq \min\{y_3,y_4\})\myd (y_5)\myd (y_6)\jtdc (y_1|X_i)\nonumber \\ 
		&\hspace{80pt}\jtdc(y_2|X_i)\jtdc(y_3|X_{i})\jtdc (y_4|X_i)\,dy_1\,dy_2\,dy_3\,dy_4\,dy_5\,dy_6\nonumber\\
		=&~Q_{21}^*+Q_{22}^*+\PP(Y_4\le \min\{Y_1,Y_2\},\,Y_5\le \min\{Y_2,Y_3\}) \nonumber\\
		&\hspace{60pt}-\PP(Y_5\le \min\{Y_1,\,Y_2\})\PP(Y_6\le \min\{Y_3,\,Y_4\})\nonumber \\ 
		=&\frac{1}{45}+Q_{21}^*+Q_{22}^*.
	\end{align}
	In terms of the notation used in the proof of Theorem 2.1 in the main paper, $Q_2^*=Q_{21}^*+Q_{22}^*$. We now move on to the error terms $Q_{21}^*$ and $Q_{22}^*$. 
	
	\medskip
	
	\noindent\textbf{Bound for $Q_{21}^*$}. Note that by our definition of $Q_1$, one has 
	\begin{align}\label{eq:usiden2}
		&|Q_{21}^*|\nonumber\\
		=&~
		\bigg\vert\frac{1}{n}\sum_{i=1}^n \E\int \ind(y_4\leq \min\{y_1,y_2\})\ind(y_5\leq \min\{y_2,y_3\})\myd(y_4)\myd(y_5)\jtdc(y_1|X_i)\nonumber\\ 
		&\hspace{60pt}
		\jtdc(y_2|X_{N(i)})\jtdc(y_3|X_{N(N(i))})\,dy_1\,dy_2\,dy_3\,dy_4\,dy_5
		\nonumber \\ 
		&-\frac{1}{n}\sum_{i=1}^n \E\int \ind(y_5\leq \min\{y_1,y_2\})\ind(y_6\leq \min\{y_3,y_4\})\myd(y_5)\myd(y_6)\jtdc(y_1|X_i)\nonumber \\ 
		&
		\hspace{60pt}\jtdc(y_2|X_{N(i)})\jtdc(y_3|X_{N(i)})\jtdc(y_4|X_{N(N(i))})\,dy_1\,dy_2\,dy_3\,dy_4\,dy_5\,dy_6\nonumber\\ 
		&
		-\frac{1}{n}\sum_{i=1}^n \E\int \ind(y_4\leq \min\{y_1,y_2\})\ind(y_5\leq \min\{y_2,y_3\})\myd(y_4)\myd(y_5)\jtdc(y_1|X_i)\nonumber\\ 
		&\hspace{80pt}
		\jtdc (y_2|X_{i})\jtdc (y_3|X_{i})\,dy_1\,dy_2\,dy_3\,dy_4\,dy_5\nonumber\\ &\hspace{20pt}
		+\frac{1}{n}\sum_{i=1}^n \E\int \ind(y_5\leq \min\{y_1,y_2\})\ind(y_6\leq \min\{y_3,y_4\})\myd (y_5)\myd (y_6)\jtdc (y_1|X_i)\nonumber \\ 
		&\hspace{80pt}
		\jtdc(y_2|X_i)\jtdc(y_3|X_{i})\jtdc (y_4|X_i)\,dy_1\,dy_2\,dy_3\,dy_4\,dy_5\,dy_6
		\bigg\vert
		\nonumber\\
		\lesssim&~  \frac{1}{n}\sum_{i=1}^n \E\int\big|\P(Y\geq \max\{y_4,y_5\}|X_i)-\P(Y\geq \max\{y_4,y_5\}|X_{N(i)})\big|\myd (y_4)\myd (y_5)\,dy_4\,dy_5\nonumber \\&+\frac{1}{n}\sum_{i=1}^n \E\int\big|\P(Y\geq y_5|X_i)-\P(Y\geq y_5|X_{N(N(i))})\big|\myd (y_5)\,dy_5\nonumber \\ &+\frac{1}{n}\sum_{i=1}^n \E\int\big|\P(Y\geq y_5|X_i)-\P(Y\geq y_5|X_{N(i)})\big|\myd (y_5)\,dy_5+O(n^{-1}).
	\end{align}
	Let us now focus on the first term on the right hand side of~\eqref{eq:usiden2}. Towards this direction, by using Assumption (A1),~(2.2) from the main paper, we further get:
	\begin{align}\label{eq:usiden3st}
		&\;\;\;\;\;\E\int\big|\P(Y\geq \max\{y_4,y_5\}|X_1)-\P(Y\geq \max\{y_4,y_5\}|X_{N(1)})\big|\myd (y_4)\myd (y_5)\,dy_4\,dy_5\nonumber \\ &\leq \E\left[\min\left\{(1+L_1 (X_1,\max\{Y_4,Y_5\})+L_2 (X_{N(1)},\max\{Y_4,Y_5\}))|X_1-X_{N(1)}|^{\eta},1\right\}\right]\nonumber \\ &\lesssim \P(|X_1-X_{N(1)}|\geq 1)+\left\{\E\left(1+(L_1 (X_1,\max\{Y_4,Y_5\}))^{\theta}+(L_2 (X_1,\max\{Y_4,Y_5\}))^{\theta}\right)\right\}^{\frac{1}{\theta}}\nonumber \\ &\left\{\E|X_1-X_{N(1)}|^{\frac{\theta\eta}{\theta-1}}\ind(|X_1-X_{N(1)}|\leq 1)\right\}^{\frac{\theta-1}{\theta}},
	\end{align}
	where the last two lines follow from Assumption (A1),~(2.2) in the main paper and H\"{o}lder's inequality. Now we will bound each term on the right hand side of~\eqref{eq:usiden3st} separately. First note that, 
	\begin{equation}\label{eq:usiden4}
		\limsup\limits_{n\to\infty}\E(L_1^{(n)}(X_1,\max\{Y_4,Y_5\}))^2\leq 2\limsup\limits_{n\to\infty}E(L_1^{(n)}(X_1,Y_4))^2\lesssim 1,
	\end{equation}
	where the last line follows from Assumption (A2),~(2.3) in the main paper. Next, on using~\cite[Lemma 9.4]{sc_corr} and~\eqref{eq:usiden4}, it further follows that,
	\begin{equation}\label{eq:usiden5}
		\limsup\limits_{n\to\infty}\E(L_1^{(n)}(X_{N(1)},\max\{Y_4,Y_5\}))^2\lesssim 1.
	\end{equation}
	Using~\cref{lem:nndist}, we then get:
	\begin{align}\label{eq:usiden6}
		\left\{\E|X_1-X_{N(1)}|^{\frac{\theta\eta}{\theta-1}}\ind(|X_1-X_{N(1)}|\leq 1)\right\}^{\frac{\theta-1}{\theta}}&\leq \left\{\E|X_1-X_{N(1)}|^{{\frac{\theta\eta}{\theta-1}}\wedge 1}\ind(|X_1-X_{N(1)}|\leq 1)\right\}^{\frac{\theta-1}{\theta}}\nonumber \\&\lesssim \left(\frac{(\log{n})^2}{n}\right)^{\left(\frac{\gamma(\theta-1)}{\theta(\gamma+1)}\wedge \frac{\eta \gamma}{\gamma+1}\right)}.
	\end{align}
	Plugging in the conclusions from~\eqref{eq:usiden4},~\eqref{eq:usiden5},~and~\eqref{eq:usiden6} into~\eqref{eq:usiden3st}, we get that the first term on the right hand side of~\eqref{eq:usiden2} satisfies,
	\begin{align*}\label{eq:usiden16st}
		&\frac{1}{n}\sum_{i=1}^n \E\int\big|\P(Y\geq \max\{y_4,y_5\}|X_i)-\P(Y\geq \max\{y_4,y_5\}|X_{N(i)})\big|\myd (y_4)\myd (y_5)\,dy_4\,dy_5\\ 
		&\leq n^{-\frac{\gamma}{\gamma+1}}(\log{n})^2+\left(\frac{(\log{n})^2}{n}\right)^{\left(\frac{\gamma(\theta-1)}{\theta(\gamma+1)}\wedge \frac{\eta \gamma}{\gamma+1}\right)}:=b_n.\numberthis
	\end{align*}
	By using a similar argument as above, we get the same bound for the other two terms on the right hand side of~\eqref{eq:usiden2}, which implies,
	\begin{align*}
		|Q_{21}^*|\lesssim b_n.
	\end{align*}
	Also, from~\eqref{eq:usiden2}, provided $X_1$ and $Y_1$ are independent, i.e., $\xj=0$, we also have $R^{(1)}=O(n^{-1})$. Therefore,
	\begin{align}\label{eq:usiden7}
		|Q_{21}^*|\lesssim b_n\ind(\xj>0)+n^{-1}.
	\end{align}
	
	\medskip
	
	\noindent\textbf{Bound for $Q_{22}^*$}. We have
	\begin{align*}
		Q_{22}^*
		=&~
		\frac{1}{n}\sum_{i=1}^n \E\int \ind(y_4\leq \min\{y_1,y_2\})\ind(y_5\leq \min\{y_2,y_3\})\myd(y_4)\myd(y_5)\jtdc(y_1|X_i) \\
		&\hspace{80pt}\jtdc (y_2|X_{i})\jtdc (y_3|X_{i})\,dy_1\,dy_2\,dy_3\,dy_4\,dy_5\\ &\hspace{20pt}-\frac{1}{n}\sum_{i=1}^n \E\int \ind(y_5\leq \min\{y_1,y_2\})\ind(y_6\leq \min\{y_3,y_4\})\myd (y_5)\myd (y_6)\jtdc (y_1|X_i) \\ 
		&\hspace{80pt}\jtdc(y_2|X_i)\jtdc(y_3|X_{i})\jtdc (y_4|X_i)\,dy_1\,dy_2\,dy_3\,dy_4\,dy_5\,dy_6\\
		& \nonumber\\
		&~-\PP(Y_4\le \min\{Y_1,Y_2\},\,Y_5\le \min\{Y_2,Y_3\}) +\PP(Y_5\le \min\{Y_1,\,Y_2\})\PP(Y_6\le \min\{Y_3,\,Y_4\}).
	\end{align*}
	Then it can be checked that
	\begin{align}\label{eq:usiden8}
		|Q_{22}^*|\leq |S_1|+2|S_2|+|S_3|+2|S_4|+|S_5|+4|S_6|+2|S_7|+4|S_8|+4|S_9|+|S_{10}|,
	\end{align}
	where
	\begin{align*}
		S_1&:=\E\int \ind(y_4\leq \min\{y_1,y_2\})\ind(y_5\leq \min\{y_2,y_3\})\myd (y_4)\myd (y_5)\myd (y_1)\nonumber\\ 
		&\hspace{30pt}(\jtdc (y_2|X_{1})-\myd (y_2))\myd (y_3)\,dy_1\,dy_2\,dy_3\,dy_4\,dy_5
		\\
		S_2&:=\E\int \ind(y_4\leq \min\{y_1,y_2\})\ind(y_5\leq \min\{y_2,y_3\})\myd (y_4)\myd (y_5)(\jtdc (y_1|X_1)-\myd (y_1))\nonumber\\ 
		&\hspace{30pt}\myd (y_2)\myd (y_3)\,dy_1\,dy_2\,dy_3\,dy_4\,dy_5
		\\
		S_3&:=\E\int \ind(y_4\leq \min\{y_1,y_2\})\ind(y_5\leq \min\{y_2,y_3\})\myd (y_4)\myd (y_5)(\jtdc (y_1|X_1)-\myd (y_1))\nonumber\\ 
		&\hspace{30pt}\myd (y_2)(\jtdc (y_3|X_1)-\myd (y_3))\,dy_1\,dy_2\,dy_3\,dy_4\,dy_5
		\\
		S_4&:=\E\int \ind(y_4\leq \min\{y_1,y_2\})\ind(y_5\leq \min\{y_2,y_3\})\myd (y_4)\myd (y_5)\myd (y_1)\nonumber\\ 
		&\hspace{30pt}(\jtdc (y_2|X_1)-\myd (y_2))(\jtdc (y_3|X_1)-\myd (y_3))\,dy_1\,dy_2\,dy_3\,dy_4\,dy_5
		\\
		S_5&:=\E\int \ind(y_4\leq \min\{y_1,y_2\})\ind(y_5\leq \min\{y_2,y_3\})\myd (y_4)\myd (y_5)(\jtd (y_1|X_1)-\myd (y_1))\nonumber\\ 
		&\hspace{30pt}(\jtdc (y_2|X_1)-\myd (y_2))(\jtdc (y_3|X_1)-\myd (y_3))\,dy_1\,dy_2\,dy_3\,dy_4\,dy_5
		\\
		S_6&:=\E\int \ind(y_5\leq \min\{y_1,y_2\})\ind(y_6\leq \min\{y_3,y_4\})\myd (y_5)\myd (y_6)(\jtdc (y_1|X_1)-\myd (y_1))\nonumber \\ 
		&\hspace{30pt}\myd (y_2)\myd (y_3)\myd (y_4)\,dy_1\,dy_2\,dy_3\,dy_4\,dy_5\,dy_6
		\\
		S_7&:=\E\int \ind(y_5\leq \min\{y_1,y_2\})\ind(y_6\leq \min\{y_3,y_4\})\myd (y_5)\myd (y_6)(\jtdc (y_1|X_1)-\myd (y_1))\nonumber \\ 
		&\hspace{30pt}(\jtdc (y_2|X_1)-\myd (y_2))\myd (y_3)\myd (y_4)\,dy_1\,dy_2\,dy_3\,dy_4\,dy_5\,dy_6
		\\
		S_8&:=\E\int \ind(y_5\leq \min\{y_1,y_2\})\ind(y_6\leq \min\{y_3,y_4\})\myd (y_5)\myd (y_6)(\jtdc (y_1|X_1)-\myd (y_1))\nonumber \\ 
		&\hspace{30pt}\myd (y_2)(\jtdc (y_3|X_1)-\myd (y_3))\myd (y_4)\,dy_1\,dy_2\,dy_3\,dy_4\,dy_5\,dy_6
		\\
		S_9&:=\E\int \ind(y_5\leq \min\{y_1,y_2\})\ind(y_6\leq \min\{y_3,y_4\})\myd (y_5)\myd (y_6)(\jtdc (y_1|X_1)-\myd (y_1))\nonumber \\ 
		&\hspace{30pt}(\jtdc (y_2|X_1)-\myd (y_2))(\jtdc (y_3|X_1)-\myd (y_3))\myd (y_4)\,dy_1\,dy_2\,dy_3\,dy_4\,dy_5\,dy_6
		\\
		S_{10}&:=\E\int \ind(y_5\leq \min\{y_1,y_2\})\ind(y_6\leq \min\{y_3,y_4\})\myd (y_5)\myd (y_6)(\jtdc (y_1|X_1)-\myd (y_1))\nonumber \\ 
		&\hspace{30pt}(\jtdc (y_2|X_i)-\myd (y_2))(\jtdc (y_3|X_1)-\myd (y_3))(\jtdc (y_4|X_1)-\myd (y_4))\,dy_1\,dy_2\dots\,dy_6
	\end{align*}
	Now we will bound each of the $S_i$'s individually. We start with $S_1$. Observe that
	\begin{align}\label{eq:usiden9}
		S_1&=\int \ind(y_4\leq \min\{y_1,y_2\})\ind(y_5\leq \min\{y_2,y_3\})\myd (y_4)\myd (y_5)\myd (y_1)\times \nonumber\\ &\times \left(\int (\jtd (y_2|X_1=x)-\myd (y_2))\mxd (x)\,dx\right)\myd (y_3)\,dy_1\,dy_2\,dy_3\,dy_4\,dy_5=0
	\end{align}
	A similar calculation shows that $S_2=0$. We now look at $S_3$. Note that,
	\begin{align}\label{eq:usiden10}
		S_3=&\,\E\int\limits_{y_4,y_5}\left(\int_{y_2} \ind(y_2\geq \max\{y_4,y_5\})\myd (y_2)\,dy_2\right)\bigg(\int_{y_1}\ind(y_1\geq y_4)(\jtdc (y_1|X_1)\nonumber \\
		&-\myd (y_1))\,dy_1\bigg)\left(\int_{y_3}\ind(y_3\geq y_5)(\jtdc (y_3|X_1)-\myd (y_3))\,dy_3\right)\myd (y_4)\myd (y_5)\,dy_4\,dy_5\nonumber \\
		=&\,\E\int\limits_{y_4,y_5}\left(\P(Y\geq \max\{y_4,y_5\})\right)\left(\P(Y\geq y_4|X_1)-\P(Y\geq y_4)\right)\left(\P(Y\geq y_5|X_1)-\P(Y\geq y_5)\right)\nonumber \\
		&\hspace{20pt}\myd (y_4)\myd (y_5)\,dy_4\,dy_5\nonumber \\
		\leq &\,\E\left[\int_{y_4} \big|\P(Y\geq y_4|X_1)-\P(Y\geq y_4)\big|\myd (y_4)\,dy_4\right]^2\nonumber \\ 
		\leq &\,\E\int \left(\P(Y\geq y_4|X_1)-\P(Y\geq y_4)\right)^2\myd (y_4)\,dy_4=\frac{1}{6}\xj,
	\end{align}
	where the last step uses the Cauchy-Schwarz inequality. In order to bound $S_4$, let us start with:
	\begin{align}\label{eq:usiden11}
		S_4=&\E\int\limits_{y_4,y_5}\left(\int_{y_2} \ind(y_2\geq \max\{y_4,y_5\})(\jtdc (y_2|X_1)-\myd (y_2))\,dy_2\right)\bigg(\int_{y_1}\ind(y_1\geq y_4)\myd (y_1)\,dy_1\bigg)\nonumber \\
		&\hspace{40pt}\left(\int_{y_3}\ind(y_3\geq y_5)(\jtdc (y_3|X_1)-\myd (y_3))\,dy_3\right)\myd (y_4)\myd (y_5)\,dy_4\,dy_5\nonumber\\
		&=\E\int\limits_{y_4,y_5}\left(\P(Y\geq \max\{y_4,y_5\}|X_1)-\P(Y\geq \max\{y_4,y_5\})\right)\left(\P(Y\geq y_5|X_1)-\P(Y\geq y_5)\right)\nonumber \\
		&\hspace{40pt}\left(\P(Y\geq y_4)\right)\myd (y_4)\myd (y_5)\,dy_4\,dy_5\nonumber \\
		=&\E\int\limits_{y_4}\int\limits_{y_5\leq y_4}\left(\P(Y\geq y_5|X_1)-\P(Y\geq y_5)\right)\left(\P(Y\geq y_4|X_1)-\P(Y\geq y_4)\right)\left(\P(Y\geq y_4)\right)\nonumber \\ 
		&\hspace{50pt}\myd (y_4)\myd (y_5)\,dy_5\,dy_4 \nonumber\\
		&+\E\int\limits_{y_4}\int\limits_{y_5\geq y_4}\left(\P(Y\geq y_5|X_1)-\P(Y\geq y_5)\right)^2\left(\P(Y\geq y_4)\right)
		\myd (y_4)\myd (y_5)\,dy_5\,dy_4\nonumber \\ 
		\leq& \E\left[\int_{y_4} \big|\P(Y\geq y_4|X_1)-\P(Y\geq y_4)\big|\myd (y_4)\,dy_4\right]^2+\E\int \left(\P(Y\geq y_4|X_1)-\P(Y\geq y_4)\right)^2\myd (y_4)\,dy_4\nonumber 
		\\ \leq&\,\,\frac{1}{6}\xj+\frac{1}{6}\xj=\frac{1}{3}\xj.
	\end{align}
	For $S_5$, we observe that
	\begin{align*}
		S_5=&~\E\int\limits_{y_4,y_5} \left(\P(Y\geq y_4|X_1)-\P(Y\geq y_4)\right)\left(\P(Y\geq y_5|X_1)-\P(Y\geq y_5)\right) \\
		&\hspace{30pt}\ind(y_2\geq \max\{y_4,y_5\})(\jtdc (y_2|X_1)-\myd (y_2))\,dy_2\myd (y_4)\myd (y_5)\,dy_4\,dy_5\\
		\leq &~
		2 \E\int \left(\P(Y\geq y_4|X_1)-\P(Y\geq y_4)\right)^2\myd(y_4)\,dy_4=\frac{1}{3}\xj\numberthis\label{eq:usiden12}
	\end{align*}
	A similar set of calculations can be used to show that $\max_{i\geq 6} |S_i|\lesssim \xj$. Combining the above observation with~\eqref {eq:usiden9},~\eqref{eq:usiden10},~\eqref{eq:usiden11}~and~\eqref{eq:usiden12}, we get $\max_{i\geq 1} |S_i|\lesssim \xj$. Applying this observation in~\eqref{eq:usiden8}, we further have:
	$$|Q_{22}^*|\lesssim \sum_{i\geq 1} |S_i|\lesssim \xj.$$
	Combining the above display with~\eqref{eq:usiden7}~and~\eqref{eq:usiden1} we finally obtain,
	\begin{equation}\label{eq:usiden14}
		\bigg|\EE T_2^*-\frac{1}{45}\bigg|\lesssim Q_{21}^*+Q_{22}^*\lesssim b_n\ind(\xj>0)+\xj+\frac{1}{n}.
	\end{equation}
	A similar set of computations can be used in $\EE T_i^*$, $i=1,3,4,5$. We skip the relevant algebraic details for brevity. We present the corresponding conclusions below:
	\begin{equation}\label{eq:usiden15}
		\max\left\{\bigg|\EE T_1^*-\frac{1}{18}\bigg|,\bigg|\E T_3^*-\frac{1}{45}\bigg|,\bigg|\E T_4^*-\frac{1}{45}\bigg|,\bigg|\E T_5^*-\frac{1}{45}\bigg|\right\}\lesssim n^{-1}+\xj+b_n\ind(\xj>0).
	\end{equation}
	By the definition of $Q_{i}^*$ for $i=1,\dots,5$ this completes the proof of \cref{lem:evxsterr}.
	\qed
	
	\subsection{Proof of Lemma \ref{lem:evxierr}}\label{sec:pfevxierr} The proof is similar to that of \cref{lem:evxsterr}. We decompose the first term $T_1$. Notice that $T_1^*$ from \cref{lem:evxsterr} is very similar and can be analysed similarly. To reduce notation, we will hide the dependence on $n$ and write $\jtd(\cdot,\cdot)$, $\mxd(\cdot)$, $\myd(\cdot)$ and $F_Y(\cdot)$ to mean $\jtd^{(n)}(\cdot,\cdot)$, $\mxd^{(n)}(\cdot)$, $\myd^{(n)}(\cdot)$ and $F_Y^{(n)}(\cdot)$ respectively. We will write $(X_1,Y_1),\dots,(X_n,Y_n)$ to mean the random variables $(X_{n,1},Y_{n,1}),\ldots,(X_{n,n},Y_{n,n})\sim\jtd^{(n)}$ from the triangular array. Also we will use $\bY$ and $\bX$ for the set of random variables $(Y_1,\ldots ,Y_n)$ and $(X_1,\ldots ,X_n)$ respectively.
	\begin{align*}
		&\EE T_1\\
		=&~\dfrac{n}{(n^2-1)^2}\sum_{i=1}^n\EE\Var(\min\{R_i,R_{N(i)}|\bX\})	\\
		=&\EE\int \dfrac{n}{(n^2-1)^2}\sum_{i=1}^{n-1}(\min\{R_i,R_{N(i)}\})^2f_{\bY|\bX}(\by)d\by-\dfrac{n}{(n^2-1)^2}\sum_{i=1}^n\EE\left[\int \min\{R_i,R_{N(i)}\}f_{\bY|\bX}(\by)d\by\right]^2
		\\=&\EE\int \left[\dfrac{n}{(n^2-1)^2}\sum_{i=1}^{n-1}(\min\{R_i,R_{N(i)}\})^2\right]\left\{f_{\bY}(\by)+f_{\bY|\bX}(\by)-f_{\bY}(\by)\right\}d\by
		\\& \hspace*{3cm}-\dfrac{n}{(n^2-1)^2}\sum_{i=1}^n\EE\left[\int \min\{R_i,R_{N(i)}\}\left\{f_{\bY}(\by)+f_{\bY|\bX}(\by)-f_{\bY}(\by)\right\}d\by\right]^2
		\\=:&\dfrac{n}{(n^2-1)^2}\sum_{i=1}^{n-1}\EE\left[\int (\min\{R_i,R_{N(i)}\})^2f_{\bY}(\by)d\by-\left(\int \min\{R_i,R_{N(i)}\}f_{\bY}(\by)d\by\right)^2\right]+Q_{11}+Q_{12}
		\\=&\dfrac{n}{(n^2-1)^2}\sum_{i=1}^n\sum_{k\neq i,\,N(i)}\EE\Var(\ind(Y_k\le \min\{Y_i,Y_{N(i)}\}))
		\\+&\dfrac{n}{(n^2-1)^2}\sum_{i=1}^n\underset{
			\underset{k_1,k_2\neq i,\,N(i)}{k_1\neq k_2}}{\sum\sum }\EE\Cov(\ind(Y_{k_1}\le \min\{Y_i,Y_{N(i)}\}),\ind(Y_{k_2}\le \min\{Y_i,Y_{N(i)}\}))+Q_{11}+Q_{12}
		\\=&\PP(Y_1\le \min\{Y_3,Y_4\},Y_2\le \min\{Y_3,Y_4\})-\PP(Y_1\le \min\{Y_3,Y_4\})\PP(Y_2\le \min\{Y_3,Y_4\})
		\\+&O(n^{-1})+Q_{11}+Q_{12}
		\\=&\dfrac{1}{6}-\dfrac{1}{3}\cdot\dfrac{1}{3}+Q_{11}+Q_{12}+O(n^{-1})
		\\=&\dfrac{1}{18}+Q_{11}+Q_{12}+O(n^{-1}).\numberthis\label{eq:T1}
	\end{align*}
	
	\medspace
	In terms of the notation used in the proof of Theorem 2.1 in the main paper, $Q_1=Q_{11}+Q_{12}$.
	\medspace
	
	\noindent \textbf{Bounding $Q_{11}$}. Since $\min\{R_i,R_{N(i)}\}=1+\disp\sum_{k\neq i,N(i)}\ind(Y_k\le Y_i,Y_{N(i)}),$ we have
	\begin{align*}
		Q_{11}:=&\int \left[\dfrac{n}{(n^2-1)^2}\sum_{i=1}^{n-1}(\min\{R_i,R_{N(i)}\})^2\right](f_{\bY|\bX}(\by)-f_{\bY}(\by))d\by
		\\=& \dfrac{n}{(n^2-1)^2}\sum_{i=1}^n\underset{k_1\neq k_2}{\sum\sum}\int\left[\ind(y_{k_1}\le y_i\wedge y_{N(i)})\ind(y_{k_2}\le y_i \wedge y_{N(i)})\right](f_{\bY|\bX}(\by)-f_{\bY}(\by))d\by
		\\&+O(n^{-1})
		\\=&\int \ind(y_{1}\le \min\{y_3,y_{4}\})\ind(y_{2}\le \min\{y_3,y_{4}\})\left(\prod_{i=1}^4f_{Y|X}(y_i|X_i)-\prod_{i=1}^4f_{Y}(y_i)\right)dy_1dy_2dy_3dy_4
		\\&+O(n^{-1}).\numberthis\label{eq:E11}
	\end{align*}
	The difference of product pdfs can be written out as in the expansion of 
	$$
	a^4-b^4=a^3(a-b)+a^2b(a-b)+ab^2(a-b)+b^3(a-b).
	$$ 	
	For example, term 1 (corresponding to $a^3(a-b)$) will be
	\begin{align*}
		&\EE\int \ind(y_{1},y_2\le \min\{y_3,y_{4}\})f_{Y|X}(y_1|X_1)f_{Y|X}(y_2|X_2)f_{Y|X}(y_3|X_3)\left(f_{Y|X}(y_4|X_{N(3)})-f_Y(y_4)\right)d\by	
		\\=&Q_{11}'+\EE\int\ind(y_{1},y_2\le \min\{y_3,y_{4}\})f_{Y|X}(y_1|X_1)f_{Y|X}(y_2|X_2)f_{Y|X}(y_3|X_3)
		\left(f_{Y|X}(y_4|X_{3})-f_Y(y_4)\right)d\by
		\\=&~Q_{11}'+
		\EE\int \ind(y_{1},y_2\le \min\{y_3,y_{4}\})f_{Y|X}(y_1|X_1)f_{Y|X}(y_2|X_2)
		\\&\hspace{50pt}(f_{Y|X}(y_3|X_3)-f_Y(y_3))
		\left(f_{Y|X}(y_4|X_{3})-f_Y(y_4)\right)d\by
		\\+&~\EE\int \ind(y_{1},y_2\le \min\{y_3,y_{4}\})f_{Y|X}(y_1|X_1)f_{Y|X}(y_2|X_2)f_Y(y_3)
		\left(f_{Y|X}(y_4|X_{3})-f_Y(y_4)\right)d\by
		\\=&~Q_{11}'+\EE\int \ind(y_{1},y_2\le \min\{y_3,y_{4}\})f_{Y|X}(y_1|X_1)f_{Y|X}(y_2|X_2)
		\\&\hspace{60pt}(f_{Y|X}(y_3|X_3)-f_Y(y_3))
		\left(f_{Y|X}(y_4|X_{3})-f_Y(y_4)\right)d\by
		+0
		\\\le &~Q_{11}'+\int \left(\int\left[\int_{y_1\vee y_2}^\infty (f_{Y|X}(y_3|X_3)-f(y_3))dy_3\right]^2f(x_3)dx_3\right)f_{Y}(y_1)f_{Y}(y_2)dy_1dy_2
		\\\le & \,\,Q_{11}'+6\xj. \numberthis\label{eq:E11_term1}
	\end{align*}
	The last expectation term on the third line above is zero by independence of $X_i$ since $\int f_{Y|X} (y|X_3)f_X(x_3)dx_3=f_Y(y).$ We also use Cauchy-Schwarz inequality in the second last line, and the definition of $\xi$ in the last line. The remainder $Q_{11}'$ can be bounded as
	\begin{align*}
		Q_{11}'
		=:	&~\EE\int \ind(y_{1},y_2\le y_3)f_{Y|X} (y_1|X_1)f_{Y|X}(y_2|X_2)f_{Y|X}(y_3|X_3)\\ &~\qquad\qquad\qquad\int_{\max(y_1,y_2)}^\infty(f_{Y|X_{3}}(y_4)-f_{Y|X_{N(3)}}(y_4))d\by \\
		=&~\EE\int \ind(y_{1},y_2\le y_3)f_{Y|X}(y_1|X_1)f_{Y|X}(y_2|X_2)f_{Y|X}(y_3|X_3)\\
		&~\hspace{50pt}(F_{X,Y}(y_1\vee y_2|X_{N(3)})-F_{X,Y}(y_1\vee y_2|X_3))d\by	\\
		\le &~\EE \int f_{Y|X}(y_1|X_1)f_{Y|X}(y_2|X_2)(F_{X,Y}(y_1\vee y_2|X_{N(3)})-F_{X,Y}(y_1\vee y_2|X_3)) dy_1dy_2 \\
		\leq &~ \E\left[\min\left\{(1+L_1^{(n)}(X_3,\max\{Y_1,Y_2\})+L_2^{(n)}(X_{N(3)},\max\{Y_1,Y_2\}))|X_3-X_{N(3)}|^{\eta},1\right\}\right]\\ 
		\lesssim &~\P(|X_3-X_{N(3)}|\geq1)
		+\left\{\E\left(1+(L_1^{(n)}(X_3,\max\{Y_1,Y_2\}))^{\theta}+(L_2^{(n)}(X_3,\max\{Y_1,Y_2\}))^{\theta}\right)\right\}^{\frac{1}{\theta}} \\ 
		&~\times \left\{\E|X_3-X_{N(3)}|^{\frac{\theta\eta}{\theta-1}}\ind(|X_3-X_{N(3)}|\leq 1)\right\}^{\frac{\theta-1}{\theta}},\numberthis\label{eq:usiden3}
	\end{align*}
	where the last two lines follow by Assumptions (A1) and (A2) from the main paper and H\"{o}lder's inequality.
	Plugging in the conclusions from~\eqref{eq:usiden4},~\eqref{eq:usiden5},~and~\eqref{eq:usiden6} into~\eqref{eq:usiden3}, we get that 
	\begin{equation}\label{eq:usiden16}
		Q_{11}' \leq n^{-\frac{\gamma}{\gamma+1}}(\log{n})^2+\left(\frac{(\log{n})^2}{n}\right)^{\left(\frac{\gamma(\theta-1)}{\theta(\gamma+1)}\wedge \frac{\eta \gamma}{\gamma+1}\right)}:=b_n,
	\end{equation}
	By using a similar argument as above, we get the same bound for the other terms in $Q_{11}$, which implies
	$$
	Q_{11}\le \xj+b_n.
	$$
	Also from~\eqref{eq:E11}, provided $X_1$ and $Y_1$ are independent, i.e., $\xj=0$, we also have $Q_{11}=O(n^{-1})$. Therefore,
	\begin{align}\label{eq:E11bd}
		\EE Q_{11}\lesssim b_n\ind(\xj>0)+\frac{1}{n}.
	\end{align}
	
	\medskip
	
	\noindent\textbf{Bounding $Q_{12}$.} Similarly, since $\min\{R_i,R_{N(i)}\}\le n$, we have
	\begin{align*}
		Q_{12}
		:=&\dfrac{n}{(n^2-1)^2}\sum_{i=1}^n\EE\bigg[\int \min\{R_i,R_{N(i)}\}(f_{\bY}(\by)-f_{\bY|\bX}(\by))d\by\times\\
		&\hspace{60pt}\times \int \min\{R_i,R_{N(i)}\}(f_{\bY}(\by)+f_{\bY|\bX}(\by))d\by\bigg]
		\\\le & \, 2n^{-1}\EE \abs*{\int \min\{R_1,R_{N(1)}\}(f_{\bY|\bX}(\by)-f_{\bY}(\by))d\by}
		\\\le &\, 2\abs*{\EE \int \ind(y_3\le (y_1, y_2))\left(f_{Y|X}(y_3|X_3)f_{Y|X}(y_1|X_1)f_{Y|X}(y_2|X_{N(1)})-\prod_{i=1}^3f_Y(y_i)\right)dy_1dy_2dy_3}
		\\\le &2\abs*{\EE \int \ind(y_3\le (y_1, y_2))\left(f_{Y|X}(y_1|X_1)f_{Y|X}(y_2|X_{N(1)})-f_Y(y_1)f_Y(y_2)\right)dy_1dy_2f_Y(y_3)dy_3}
		\\\le &2\abs*{\EE \int \ind(y_3\le (y_1, y_2))\left(f_{Y|X}(y_1|X_1)f_{Y|X}(y_2|X_{N(1)})-f_Y(y_1)f_Y(y_2)\right)dy_1dy_2f_Y(y_3)dy_3}
		\\&+2\abs*{\EE\int \ind(y_3\le y_1)f_{Y|X}(y_1|X_1)dy_1\left(F_{X,Y}(y_3|X_1)-F_{X,Y}(y_3|X_1)\right)f_Y(y_3)dy_3}
		\\\le &2\abs*{\EE\int \ind(y_3\le (y_1, y_2))\left(f_{Y|X}(y_1|X_1)-f_Y(y_1)\right)\left(f_{Y|X}(y_2|X_1)-f_Y(y_2)\right)dy_1dy_2f_Y(y_3)dy_3}
		\\&+2\EE\left[ \min\left\{(1+L_1^{(n)}(X_1,Y_2)+L_2^{(n)}(X_{N(1)},Y_2))|X_1-X_{N(1)}|^{\eta},1\right\}\right]
		\\\lesssim & \int\EE \left(\int 
		\ind(y_3\le y_1)
		\left(f_{Y|X}(y_1|X_2)-f_Y(y_1)\right)dy_1\right)^2f_Y(y_3)dy_3
		\\&\hspace{100pt}+n^{-\frac{\gamma}{\gamma+1}}(\log{n})^2+\left(\frac{(\log{n})^2}{n}\right)^{\left(\frac{\gamma(\theta-1)}{\theta(\gamma+1)}\wedge \frac{\eta \gamma}{\gamma+1}\right)}
		\\\lesssim &~ \xj+n^{-\frac{\gamma}{\gamma+1}}(\log{n})^2+\left(\frac{(\log{n})^2}{n}\right)^{\left(\frac{\gamma(\theta-1)}{\theta(\gamma+1)}\wedge \frac{\eta \gamma}{\gamma+1}\right)}.
	\end{align*}
	In the above, we have used the fact that $\int f_{Y|X}(y|x)f_X(x)dx=f_Y(y)$ in the third and fifth inequalities. The last line uses Cauchy-Schwarz inequality for the first term and Assumption (A1) in the main paper, coupled with equations \eqref{eq:usiden4}-\eqref{eq:usiden6} for the second term. 
	
	On the other hand, notice that when $\xj=0$, $X_1$ and $Y_1$ are independent and $Q_{12}=0$ from definition. Thus
	\begin{equation}\label{eq:E12bd}
		Q_{12}\lesssim \xj +b_n\ind(\xj>0)
	\end{equation}
	Plugging in \eqref{eq:E11bd} and \eqref{eq:E12bd} into \eqref{eq:T1} yields
	$$
	\abs*{\EE T_1-\dfrac{1}{18}}\lesssim n^{-1}+\xj+b_n\ind(\xj>0).
	$$
	A similar set of computations can be used in $\EE T_i$, $i=2,3,4$. We skip the relevant algebraic details for	brevity. We present the corresponding conclusions below:
	$$
	\max\{\EE\abs*{T_1-\tfrac{1}{18}},\EE\abs*{T_2-\tfrac{1}{45}},\EE\abs*{T_3-\tfrac{4}{45}},\EE\abs*{T_4+\tfrac{2}{45}}\} \lesssim n^{-1}+\xj+b_n\ind(\xj>0).
	$$
	By the definitions of $Q_1,\dots,Q_5$ this finishes the proof of \cref{lem:evxierr}.
	\qed
	
	\subsection{Proof of Lemma \ref{lem:varex}}\label{sec:pfvarex} As before, we hide the dependence on $n$ and write $\jtd$, $f_X$, $f_Y,$ $F_Y$ to mean $\jtd^{(n)}$, $f_X^{(n)}$, $f^{(n)}_Y$, $\F$ respectively. We will write $(X_1,Y_1),\dots,(X_n,Y_n)$ to mean the random variables $(X_{n,1},Y_{n,1}),\ldots,(X_{n,n},Y_{n,n})$ for some fixed $n$ in the triangular array.
	
	\medspace
	
	\textbf{Bounding $\mathrm{Var}(\E[\sqrt{n}\xi^{\stp}_n|\bX^{(n)}])$.}
	
	We will start this proof with some definitions:
	$$H_1:=\frac{6n}{n^2-1}\sum_{i=1}^n \int \min\{F_Y(y_1),F_Y(y_2)\}f_{Y|X}(y_1|X_i)f_{Y|X}(y_2|X_{N(i)})\,dy_1\,dy_2,$$
	$$H'_1:=\frac{6n}{n^2-1}\sum_{i=1}^n \int \min\{F_Y(y_1),F_Y(y_2)\}f_{Y|X}(y_1|X_i)f_{Y|X}(y_2|X_{i})\,dy_1\,dy_2,$$
	$$H_2:=\frac{6}{n^2-1}\underset{i\neq j}{\sum\sum} \int \min\{F_Y(y_1),F_Y(y_2)\}f_{Y|X}(y_1|X_i)f_{Y|X}(y_2|X_{j})\,dy_1\,dy_2.$$
	Based on the above notation, note that:
	\begin{equation}\label{eq:vofe1}
		\mathrm{Var}(\E[\sqrt{n}\xi^{\stp}_n|\bX^{(n)}])=n\E(H_1-H_2-\E H_1+\E H_2)^2.
	\end{equation}
	Using~\eqref{eq:vofe1}, it is easy to see that:
	\begin{equation}\label{eq:vofe2}
		\mathrm{Var}(\E[\sqrt{n}\xi^{\stp}_n|\bX^{(n)}])\lesssim n\E(H_1-H'_1-\E (H'_1- H_2))^2+n\mathrm{Var}(H'_1-H_2).
	\end{equation}
	We will now bound the two terms on the right hand side of~\eqref{eq:vofe2} separately.
	
	We start with the first term. Towards this direction, define $Z_n:=\sqrt{n}(H_1-H'_1)$. The first term on the right hand side of~\eqref{eq:vofe2} then equals $\mathrm{Var}(Z_n)$. Next observe that:
	\begin{align}\label{eq:vofe3}
		\E |Z_n| &\leq \frac{1}{\sqrt{n}}\sum_{i=1}^n \E\int \P(Y\geq y|X_i)|\P(Y\geq y|X_i)-\P(Y\geq y|X_{N(i)})|\myd(y)\,dy\nonumber \\ &\leq \frac{1}{\sqrt{n}}\sum_{i=1}^n \E\min\left\{\int (1+L_1^{(n)}(X_i,y)+L_2^{(n)}(X_{N(i)},y))|X_i-X_{N(i)}|^{\eta}\myd^{(n)}(y)\,dy,1\right\} \nonumber \\ &\leq \sqrt{n}b_n,
	\end{align}
	where $b_n$ is defined as in~\eqref{eq:xi_varcovs} and the last line follows using similar computations as in~\eqref{eq:usiden6}~and~\eqref{eq:usiden3}. Note that, if $X_1$ and $Y_1$ are independent, then $Z_n=0$, i.e.,
	\begin{equation}\label{eq:vofe7}
		\xj=0 \quad \implies \quad Z_n=0.
	\end{equation}
	Combining~\eqref{eq:vofe7} with~\eqref{eq:vofe3}, we get:
	\begin{align}\label{eq:vofe4}
		\E |Z_n|\lesssim \sqrt{n}b_n\ind(\xj>0).
	\end{align}
	The above gives us a bound on the first moment of $|Z_n|$. However, we are interested in the second moment. To make this transition, we will use Lemma~\ref{lem:expocon}. 
	
	By~\cref{lem:expocon}, there exists $K>0$, a positive constant, such that 
	\begin{equation}\label{eq:vofe5}
		\P\left(|Z_n-\E Z_n|\geq K\sqrt{\log{n}}\right)\lesssim \frac{1}{n^6}.
	\end{equation}
	
	Choose such a $K>0$ satisfying~\eqref{eq:vofe5} and consider the following decomposition:
	\begin{align}\label{eq:vofe6}
		&\;\;\;\E(Z_n-\E Z_n)^2\nonumber \\&=\E\left[(Z_n-\E Z_n)^2\ind(|Z_n-\E Z_n|\geq K\sqrt{\log{n}})\right]+\E\left[(Z_n-\E Z_n)^2\ind(|Z_n-\E Z_n|\leq K\sqrt{\log{n}})\right].
	\end{align}
	
	For the first term, we begin by observing that $\E(Z_n-\E Z_n)^4\lesssim 1$ by~\cref{lem:expocon}. Consequently by the Cauchy-Schwarz inequality,
	\begin{align}\label{eq:vofe8}
		&\;\;\;\;\E\left[(Z_n-\E Z_n)^2\ind(|Z_n-\E Z_n|\geq K\sqrt{\log{n}})\right]\nonumber \\&\leq \sqrt{\E(Z_n-\E Z_n)^4}\sqrt{\P(|Z_n-\E Z_n|\geq K\sqrt{\log{n}})}\lesssim n^{-3}
	\end{align}
	where the last line follows from~\eqref{eq:vofe5}. 
	
	For the second term, note that
	\begin{equation}\label{eq:vofe9}
		\E\left[(Z_n-\E Z_n)^2\ind(|Z_n-\E Z_n|\leq K\sqrt{\log{n}})\right]\leq K\sqrt{\log{n}}\E|Z_n|\lesssim b_n\sqrt{n\log{n}},
	\end{equation}
	where the last line follows from~\eqref{eq:vofe3}. Combining~\eqref{eq:vofe9},~\eqref{eq:vofe8}~and~\eqref{eq:vofe7}, we then get:
	\begin{equation}\label{eq:vofe10}
		n\E(H_1-H'_1-E (H'_1- H_2))^2\lesssim \left(b_n\sqrt{n\log{n}}+n^{-3}\right)\ind(\xj>0),
	\end{equation}
	which provides a bound for the first term in~\eqref{eq:vofe2}.
	
	For the second term in~\eqref{eq:vofe2}, we can use the same technique that we used to bound the $S_i$'s in the proof of~\cref{lem:evxsterr}, to get:
	\begin{equation}\label{eq:vofe11}
		n\mathrm{Var}(H'_1-H_2)\lesssim \xj+n^{-1}.
	\end{equation}
	Finally, combining~\eqref{eq:vofe10}~and~\eqref{eq:vofe11} completes the proof of Part 1.
	
	\medspace
	
	\textbf{Bounding $\mathrm{Var}(\E[\sqrt{n}\xi^{\prime}_n|\bX^{(n)}])$.} Recall that 
	\begin{equation}\label{eq:base1}
		\min{(R_i,R_{N(i)})}=1+\sum_{j\notin\{i,N(i)\}} \ind\left(Y_j\leq \min{(Y_i,Y_{N(i)})}\right).
	\end{equation}
	We will decompose $f_{Y|X}(y|x)=f_Y(y)+f_{Y|X}(y|x)-f_Y(y).$ Then using~\eqref{eq:base1} in the expression of $\xi'_n$, we get:
	\begin{align}\label{eq:base2}
		&\EE[\sqrt{n}\xi'_n|\bX^{(n)}]/6 \nonumber 
		\\=&
		\frac{\sqrt{n}}{n^2-1}\sum_{\substack{i=1\\ j\neq i,N(i)}}^n \int \ind (y_3\le\min\{y_1,y_2\})\jtdc(y_1|X_i)\jtdc(y_2|X_{N(i)})\jtdc(y_3|X_j)dy_1dy_2dy_3+O(\tfrac{1}{n})\nonumber 
		\\=&\frac{\sqrt{n}}{n^2-1}\sum_{i=1}^n \sum_{j\neq \{i,N(i)\}}\Bigg[ \int \ind(y_3\leq \min(y_1,y_2))f_Y(y_1)f_Y(y_2)f_Y(y_3)\,dy_1\,dy_2\,dy_3\nonumber 
		\\ &+\int \ind(y_3\leq \min(y_1,y_2))(f_{Y|X}(y_1|X_i)-f_Y(y_1))f_{Y|X}(y_2|X_{N(i)})f_{Y|X}(y_3|X_j)\,dy_1\,dy_2\,dy_3\nonumber  
		\\ &+\int \ind(y_3\leq \min(y_1,y_2))f_Y(y_1)(f_{Y|X}(y_2|X_{N(i)})-f_Y(y_2))f_{Y|X}(y_3|X_j)\,dy_1\,dy_2\,dy_3\nonumber 
		\\ &+\int \ind(y_3\leq \min(y_1,y_2))f_Y(y_1)f_Y(y_2)(f_{Y|X}(y_3|X_j)-f_Y(y_3))\,dy_1\,dy_2\,dy_3
		\Bigg]+O(n^{-1}).
	\end{align}
	Note that the first term within the sum is just $\PP(Y_3\le\min\{Y_1,Y_2\})+O(n^{-1})=\frac{1}{3}+O(n^{-1}).$ 
	
	Among the rest, let us define
	\begin{align*}
		&T_1\\
		:=&\frac{\sqrt{n}}{n^2-1}\sum_{i=1}^n\sum_{j\neq i,N(i)}\int \ind(y_3\leq \min(y_1,y_2))(f_{Y|X}(y_1|X_i)-f_Y(y_1))\\
		&\hspace{100pt}f_{Y|X}(y_2|X_{N(i)})f_{Y|X}(y_3|X_j)\,dy_1\,dy_2\,dy_3
		\\=&\frac{\sqrt{n}}{n^2-1}\sum_{i=1}^n\sum_{j\neq i,N(i)}\int (F_Y(y)-F_{Y|X}(y|X_i))(1-F_{Y|X}(y|X_{N(i)}))f_{Y|X}(y|X_j)dy
		\\=:&\frac{\sqrt{n}}{n^2-1}\sum_{i=1}^n\sum_{j\neq i,N(i)}\int (F_Y(y)-F_{Y|X}(y|X_i))(1-F_{Y|X}(y|X_{i}))f_{Y|X}(y|X_j)dy+T_1'(\bX^{(n)})
		\\=:&\frac{\sqrt{n}}{(n^2-1)}\sum_{i=1}^n\sum_{j\neq i, N(i)}\mathfrak{F}(X_i,X_j)+T_1'
		\\=:&T_1^*+T_1'.\numberthis\label{eq:var_T1}
	\end{align*}
	%where $A_1'(\bX)=\frac{1}{n^2-1}\sum_{i=1}^n\sum_{j\neq i,N(i)}\int (F(y_3)-F(y_3|X_i))(F(y_3|X_{i})-F(y_3|X_{N(i)}))f(y_3|X_j)dy_3.$ 
	Note that $\mathfrak{F}(X_i,X_j)\le 1.$ Then
	\begin{align*}
		&\Var(T_1^*) \\
		=&\Var\left(\frac{\sqrt{n}}{n^2-1}\sum_{i=1}^n\sum_{j\neq i,N(i)}\int (F_Y(y)-F_{Y|X}(y|X_i))(1-F_{Y|X}(y|X_{i}))f_{Y|X}(y|X_j)dy\right)
		\\=&\frac{n}{(n^2-1)^2}\sum_{i_1=1}^n\sum_{i_2=1}^n\sum_{j_1\neq i_1,N(i_1) }\sum_{j_2\neq i_2,N(i_2)}\Cov\left(\mathfrak{F}(X_{i_1},X_{j_1}),\,\mathfrak{F}(X_{i_2},X_{j_2})\right)
		\\=&\frac{n}{(n^2-1)^2}\underset{i_1\neq i_2}{\sum\sum}\sum_{j\neq i_1,N(i_1),i_2,N(i_2)}\Cov\left(\mathfrak{F}(X_{i_1},X_{j}),\,\mathfrak{F}(X_{i_2},X_{j})\right)
		\\&+\frac{n}{(n^2-1)^2}\sum_{i=1}^n\underset{j_1\neq j_2\neq i,N(i)}{\sum\sum}\Cov\left(\mathfrak{F}(X_{i},X_{j_1}),\,\mathfrak{F}(X_{i},X_{j_2})\right)+O(n^{-2})\numberthis\label{eq:varT1*1}
		%\\\lesssim & \,\,\EE\left(\mathfrak{F}(X_1,X_3)\mathfrak{F}(X_2,X_3)\right)+\EE\left(\mathfrak{F}(X_1,X_2)\mathfrak{F}(X_1,X_3)\right)
	\end{align*}
	where the last step follows by the independence of $X_1,\dots,X_n$. For any $1\le i_1,i_2\le n$ and $j\neq i_1,N(i_1),i_2,N(i_2)$
	\begin{align*}
		&\EE\left(\mathfrak{F}(X_{i_1},X_{j})\mathfrak{F}(X_{i_2},X_{j})\right)
		\\=&\int \int \left[\int (F_Y(y_1)-F_{Y|X}(y_1|X_{i_1}))(1-F_{Y|X}(y_1|X_{i_1}))f_X(x_{i_1})dx_{i_1}\right]f_{Y|X}(y_1|X_j)dy_1\times \\
		&\times \int \left[\int (F_Y(y_2)-F_{Y|X}(y_2|X_{i_2}))(1-F_{Y|X}(y_2|X_{i_2}))f_X(x_{i_2})dx_{i_2}\right]f_{Y|X}(y_2|X_j)dy_1f_X(x_j) dx_j
		%\\\le & \left[\int \left\{\int \left[\int (F(y_1)-F(y_1|X_{i_1}))(1-F(y_1|X_{i_1}))d\mu(x_{i_1})\right]f(y_1|X_j)dy_1\right\}^2d\mu(x_j)\int\left\{\int \dots \right\}^2d\mu(x_j)\right]^{1/2}
		\\\le &\int \int \left[\int (F_Y(y)-F_{Y|X}(y|X))(1-F_{Y|X}(y|X))f_X(x)dx\right]^2f_{Y|X}(y|X_j)dyf_X(x_j)dx_j
		\\\le &\int \left[\int (F_Y(y)-F_{Y|X}(y|X))^2(1-F_{Y|X}(y|X))^2f_X(x)dx\right]f_Y(y)dy\\\le &\,\xj.\numberthis\label{eq:varT1*2}
	\end{align*}
	Here we have used Cauchy-Schwarz inequality in the second and third inequalities. The exact same calculation also implies that 
	$$
	\EE\left(\mathfrak{F}(X_{i},X_{j_1})\mathfrak{F}(X_{i},X_{j_2})\right)\le \xj.
	$$
	By \eqref{eq:varT1*1} and \eqref{eq:varT1*2} we have
	\begin{equation}\label{eq:varT1*}
		\Var(T_1^*)\lesssim \xj+n^{-2}.
	\end{equation}
	On the other hand, 
	\begin{align*}
		&\EE \abs*{T_1'}\\
		=&\dfrac{\sqrt{n}}{n^2-1}\EE\sum_{i=1}^n	\sum_{j\neq i,N(i)}\int \abs*{(F_Y(y)-F_{Y|X}(y|X_i))(F_{Y|X}(y|X_{N(i)})-F_{Y|X}(y|X_{i}))}f_{Y|X}(y|X_j)dy\\
		\lesssim &\dfrac{\sqrt{n}}{n+1}\sum_{i=1}^n\EE\int   \abs*{\PP(Y\ge y|X_i)-\PP(Y\ge y|X_{N(i)})}f_Y(y)dy
		\lesssim \sqrt{n}b_n \numberthis\label{eq:vofe3'}
	\end{align*}
	following \eqref{eq:vofe3}. Note that if $X_1$ and $Y_1$ are independent then $T_1'(\bX^{(n)})=0$, i.e.,
	\begin{equation}\label{eq:vofe7'}
		\xj=0\implies T_1'=0
	\end{equation}
	which together with \eqref{eq:vofe3'} implies 
	\begin{equation}\label{eq:vofe4'}
		\EE \abs*{T_1'}\lesssim \sqrt{n}b_n\ind(\xj>0). 
	\end{equation}
	As in part 1 of the lemma, we make a transition to the second moment via McDiarmid's inequality. Notice that just as in \cref{lem:expocon}, we have a constant $C>0$ such that for any $n\ge 1$ and $t\ge 0$
	$$
	\PP(\abs*{T_1'(\bX^{(n)})-\EE T_1'(\bX^{(n)})}\ge t)\le 2\exp(-Cnt^2).
	$$
	In particular, there exists a constant $K>0$ such that
	\begin{equation}\label{eq:vofe5'}
		\PP(\abs*{T_1'-\EE T_1'}\ge K\sqrt{\log n})\lesssim \frac{1}{n^6}.
	\end{equation}
	One can then follow \eqref{eq:vofe6}-\eqref{eq:vofe9} to get
	\begin{align*}
		&\EE(T_1'-\EE T_1')^2\\
		=&\EE\left[(T_1'-\EE T_1')^2\ind(\abs*{T_1'-\EE T_1'}\ge K\sqrt{\log n})\right]+\EE\left[(T_1'-\EE T_1')^2\ind(\abs*{T_1'-\EE T_1'}\le K\sqrt{\log n})\right]\\
		\le & \sqrt{\EE(T_1'-\EE T_1')^4}\sqrt{\PP(\abs*{T_1'-\EE T_1'}\ge K\sqrt{\log n})}+K\sqrt{\log n}\EE\abs*{Z_n}\\
		\lesssim & \,\,n^{-3}+b_n\sqrt{n\log n}. \numberthis \label{eq:vofe9'}
	\end{align*}
	which, together with \eqref{eq:vofe7'} implies that
	\begin{equation}\label{eq:varT1'}
		\EE(T_1'-\EE T_1')^2\lesssim \left(n^{-3}+b_n\sqrt{n\log n}\right)\ind(\xj>0).
	\end{equation}
	Combining \eqref{eq:varT1*}, \eqref{eq:varT1'} and \eqref{eq:var_T1}, we have
	\begin{equation}
		\Var(T_1)\le \xj+\left(n^{-3}+b_n\sqrt{n\log n}\right)\ind(\xj>0)+n^{-2}.
	\end{equation}
	Using a similar calculation, we have the same bound for the variance of the third and fourth terms on the right-hand side of \eqref{eq:base2}. This finishes the proof of the second bound, and hence \cref{lem:varex} follows.
	\qed
	
	\subsection{Proof of Lemma \ref{lem:interaction_rule}}\label{sec:pfinteraction_rule} 
	Let us consider $W_\ell$ defined in \eqref{eq:def_w_l} and observe that $W_\ell(\bm x)$  depends on $(x_\ell,y_\ell)$ and $(x_{N(\ell)},y_{N(\ell)})$. Consider $\bm x, \bm x^\prime \in (\mathbb{R}^2)^n$ and $\{i,j\} \in [n]\times[n]$, such that the edge $\{i,j\}$ does not exist in $\mathcal{G}(\bm x), \mathcal{G}(\bm x^{i}), \mathcal{G}(\bm x^{j})$ and $\mathcal{G}(\bm x^{ij})$. We shall show that for all $\ell \in [n]$,
	\begin{equation}
		\label{eq:balance_cond}
		W_\ell(\bm x)-W_\ell(\bm x^{i})-W_\ell(\bm x^{j})+W_\ell(\bm x^{ij})=0.
	\end{equation}
	By \eqref{eq:def_d}, we have
	\begin{equation}
		\label{eq:eq_8}
		\Big|D_{\bm x}(i,j)-D_{\bm x^\prime}(i,j)\Big| \le \#\Big\{t: x_t \neq x^\prime _t\Big\}.
	\end{equation}
	Let us fix an $\ell \in [n]$ such that $D_{\bm x}(\ell,j) \le 1$. That means there exists at most one $k$, such that, $x_{\ell} < x_k < x_j$. We shall show
	\[
	W_\ell(\bm x)=W_\ell(\bm x^i) \quad \mbox{and} \quad W_\ell(\bm x^j)=W_\ell(\bm x^{ij}).
	\]
	As the edge $\{i,j\}$ is absent in $\mathcal{G}(\bm x)$, we have
	\begin{equation}
		\label{eq:eq_12}
		D_{\bm x}(\ell, i) > 2.
	\end{equation}
	In particular, $i$ is different from $\ell$ and $j$. Again, as the edge $\{i,j\}$ is absent in $\mathcal{G}(\bm x^i)$, we further have,
	\begin{equation}
		\label{eq:eq_13}
		D_{\bm x^{i}}(\ell, i) > 2.
	\end{equation}
	This implies $N(\ell)$ does not change in $\bm x$ and $\bm x^{i}$. Hence,
	\[
	W_\ell(\bm x)=W_\ell(\bm x^i).
	\]
	Next we show that if $D_{\bm x}(\ell,j) \le 1$ then we have $D_{\bm x^j}(\ell,i) \ge 2$ and $D_{\bm x^{ij}}(\ell,i) \ge 2$. Suppose not, let $D_{\bm x^j}(\ell,i) \le 1$. If $j = \ell$, then clearly this is false as the edge $\{i,j\}$ is absent in $\mathcal{G}(\bm x^j)$. If $j \neq \ell$, then by \eqref{eq:eq_8} and \eqref{eq:eq_12}, we get,
	\[
	D_{\bm x^j}(\ell,i) \ge 2.
	\]
	This is a contradiction. Further, if $D_{\bm x^{ij}}(\ell,i) \le 1$ and  $j = \ell$, then this is similarly false as the edge $\{i,j\}$ edge is absent in $\mathcal{G}(\bm x^{ij})$. If $j \neq \ell$, then by \eqref{eq:eq_8} and \eqref{eq:eq_13}, we get,
	\[
	D_{\bm x^{ij}}(\ell,i) \ge 2.
	\]
	Again we get a contradiction. As $D_{\bm x^j}(\ell,i) \ge 2$ and $D_{\bm x^{ij}}(\ell,i) \ge 2$, $N(\ell)$ is same in $\bm x^{j}$ and $\bm x^{ij}$. Hence,
	\[
	W_\ell(\bm x^j)=W_\ell(\bm x^{ij}).
	\]
	This implies if $D_{\bm x}(\ell,j) \le 1$, then \eqref{eq:balance_cond} holds. Similarly if $D_{\bm x^{i}}(\ell,j) \le 1$ or $D_{\bm x^j}(\ell,j) \le 1$ or $D_{\bm x^{ij}}(\ell,j) \le 1$, then \eqref{eq:balance_cond} holds. Now if $D_{\bm x}(\ell,j), D_{\bm x^{i}}(\ell,j), D_{\bm x^{j}}(\ell,j), D_{\bm x^{ij}}(\ell,j) >1$, then $N(\ell)$ does not change in $\bm x$ and $\bm x^{j}$ implying 
	\[
	W_\ell(\bm x)=W_\ell(\bm x^j).
	\]
	Also $N(\ell)$ does not change in $\bm x^i$ and $\bm x^{ij}$ implying 
	\[
	W_\ell(\bm x^i)=W_\ell(\bm x^{ij}).
	\]
	This implies the lemma.	
	\qed
	
	\subsection{Proof of Lemma A.1}\label{sec:pfbias_rems}

	To reduce notation we write $\jtd(\cdot,\cdot)$, $\mxd(\cdot)$, $\myd(\cdot)$ and $F_Y(\cdot)$ to mean $\jtd^{(n)}(\cdot)$, $\mxd^{(n)}(\cdot)$, $\myd^{(n)}(\cdot)$ and $F_Y^{(n)}(\cdot)$ respectively. We will write $(X_1,Y_1),\dots,(X_n,Y_n)$ to mean the random variables $(X_{n,1},Y_{n,1}),\ldots,(X_{n,n},Y_{n,n})\sim \jtd^{(n)}$ from the triangular array. We will also write $\bX$ to mean $(X_1,\dots,X_n)$. It is useful to note some properties of $g(y|x)$ which we will use throughout the proof.
	\begin{itemize}
		\item[a)] \begin{equation}\label{eq:g_cond_den}
			\int g(y|x) dy=1\quad \text{and}\quad \int g(y|x)f_X(x)dx=f_Y(y).
		\end{equation}
		In this sense, $g(y|x)$ is like a conditional density, although it can take negative values.
		\item[b)] Recalling the definition of $g$, we write 
		\begin{equation}\label{eq:g_cond_CDF2}
			G(y|x)=\int _{-\infty}^yg(t|x)dt=\tfrac{1}{p}\jtd(y|x)+(1-\tfrac{1}{p})F_Y(y);\quad\quad \int G(y|x)f_X(x)dx=F_Y(y).
		\end{equation}
		\item[c)] For any $y,\,x_1,\,x_2$, Assumption (A1) in the main paper implies there exist $\eta\in(0,1],$ $\theta>1$ and $C>0$ such that
		\begin{equation}\label{eq:alt_lipschitz}
			\begin{split}
				\abs*{G(y|x_1)-G(y|x_2)}&\le (1+L_1^{(n)}(x_1,y)+L_2^{(n)}(x_2,y))\abs*{x_1-x_2}^{\eta}
				\\
				\underset{n\to \infty}{\lim\sup}\int & (L_1^{(n)}(x,y))^{\theta}f_X(x)f_Y(y)dy\le C.
			\end{split}
		\end{equation}
	\end{itemize}
	We will prove the three parts of Lemma A.1 as \cref{lem:T1,lem:T2,lem:T3} presented below. 
	
	\begin{lemma}
		\label{lem:T1}
		$\abs*{T_1-1}\lesssim \left(n^{-\frac{\gamma}{\gamma+1}}(\log{n})^2+\left(\dfrac{(\log n)^2}{n}\right)^{\left(\tfrac{\gamma(\theta-1)}{\theta(\gamma+1)}\wedge \tfrac{\eta\gamma}{\gamma+1}\right)}\right)\ind(\xj>0)$.
	\end{lemma}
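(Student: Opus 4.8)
The plan is to unwind the decomposition \eqref{eq:sumterms}, in which $T_1$ is the coefficient of $(1-p)^2p$, i.e.\ the sum of the three terms obtained when exactly one of $Y_{n,1},Y_{n,N(1)},Y_{n,k}$ is drawn from the ``excess'' component $g(\cdot\,|\cdot)$ of the mixture \eqref{eq:def_g} and the other two from $\myd$. First I would dispose of the two terms in which the $g$-variable is $Y_{n,k}$ or $Y_{n,1}$: in both of these the index $N(1)$ plays no role, so integrating out the two $\myd$-distributed variables leaves $\EE\big[\int(1-F_Y(y))^{2} g(y|X_{n,k})\,dy\big]$, respectively $\EE\big[\int\big(F_Y(y)-\tfrac{1}{2} F_Y(y)^{2}\big)g(y|X_{n,1})\,dy\big]$; using $\int g(y|x)\mxd(x)\,dx=\myd(y)$ from \eqref{eq:g_cond_den} together with $\int_0^1(1-u)^2\,du=\int_0^1\big(u-\tfrac{1}{2}u^2\big)\,du=\tfrac{1}{3}$, each of these two contributions equals exactly $\tfrac{1}{3}$.

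The only term carrying an error is the one with $Y_{n,N(1)}\sim g(\cdot\,|X_{n,N(1)})$. Integrating out $Y_{n,k}$ and $Y_{n,1}$ reduces it to $\EE\big[\int\psi(y)\,g(y|X_{n,N(1)})\,dy\big]$, where $\psi(y):=F_Y(y)-\tfrac{1}{2}F_Y(y)^2$. Writing $g(y|X_{n,N(1)})=g(y|X_{n,1})+\big(g(y|X_{n,N(1)})-g(y|X_{n,1})\big)$, the first piece contributes $\int\psi(y)\myd(y)\,dy=\tfrac{1}{3}$ exactly (since $X_{n,1}$ has marginal $\mxd$), while for the second I would integrate by parts using $\psi'(y)=\myd(y)\big(1-F_Y(y)\big)$; the boundary term vanishes because $G(y|x_1)-G(y|x_2)=p^{-1}\big(F_{Y|X}(y|x_1)-F_{Y|X}(y|x_2)\big)\to 0$ as $y\to\pm\infty$, where $G(\cdot\,|\cdot)$ is the conditional CDF attached to $g$ as in \eqref{eq:g_cond_CDF2}. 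This gives
\[
T_1 \;=\; 1 + E,\qquad |E|\ \le\ \EE\!\left[\int\big|G(y|X_{n,N(1)})-G(y|X_{n,1})\big|\,\myd(y)\,dy\right].
\]

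To bound $|E|$ I would use the Lipschitz estimate \eqref{eq:alt_lipschitz} for $G$, which holds with the same exponents $\eta,\theta$ as \cref{as:Lipschitz} because $G$ differs from $F_{Y|X}$ only by the $x$-free affine map $t\mapsto p^{-1}t+(1-p^{-1})F_Y$, and then argue exactly as in the proof of \cref{lem:evxsterr} (see \eqref{eq:usiden3st} and \eqref{eq:usiden6}, or equivalently the bound on $\E|Z_n|$ in \eqref{eq:vofe3}): split on $\{|X_{n,1}-X_{n,N(1)}|\le 1\}$, apply H\"older's inequality to separate the $L_i^{(n)}$-moments (controlled by \eqref{eq:Lipschitz2}, \cref{as:nn_dists}, and, for the $X_{n,N(1)}$-argument, by \cite[Lemma 9.4]{sc_corr}) from the nearest-neighbour spacing moment $\EE|X_{n,1}-X_{n,N(1)}|^{\theta\eta/(\theta-1)}\ind(|X_{n,1}-X_{n,N(1)}|\le 1)$ (controlled by \cref{lem:nndist}). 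This yields $|E|\lesssim b_n$ with $b_n$ as in \eqref{eq:defbn}. Finally, when $\xj=0$ the variables $X_{n,1}$ and $Y_{n,1}$ are independent, so $g(\cdot\,|x)=\myd(\cdot)$ for a.e.\ $x$, hence $G(y|X_{n,N(1)})=G(y|X_{n,1})$ and $E=0$; this produces the factor $\ind(\xj>0)$, so $|T_1-1|=|E|\lesssim b_n\ind(\xj>0)$, as claimed. The only real obstacle is this nearest-neighbour spacing estimate --- making the unbounded, $y$- and $n$-dependent H\"older constants $L_i^{(n)}$ interact correctly with $\EE|X_{n,1}-X_{n,N(1)}|^{\theta\eta/(\theta-1)}$ and the truncation at scale $1$ --- but it is the identical computation already performed in \cref{lem:evxsterr}, so it can essentially be quoted.
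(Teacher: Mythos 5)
Your proposal is correct and follows essentially the same route as the paper: decompose $T_1$ into the three single-$g$ terms, reduce the only nontrivial one (the $Y_{n,N(1)}$ term) to $\EE\int|G(y|X_{n,N(1)})-G(y|X_{n,1})|\myd(y)\,dy$, and bound this via \cref{as:Lipschitz}, H\"older's inequality and \cref{lem:nndist}, with the indicator coming from $g(\cdot|x)=\myd(\cdot)$ when $\xj=0$. The only cosmetic differences are that you use integration by parts where the paper uses Fubini to pass from the density difference to the CDF difference, and that you observe the $Y_{n,1}$- and $Y_{n,k}$-terms are exact (the paper assigns them the same $O(b_n)$ error ``by analogous calculation'').
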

	
	\subsubsection{Proof of Lemma \ref{lem:T1}}\label{sec:pfT1}

	$T_1$ involves two contributions from $f_Y(y)$ and one from $g(y|x)$. Thus
	\begin{equation}
		\begin{split}
			T_1&=\int \sum_{j\neq 1}\ind(N(1)=j) \ind(y_k<\min\{y_1,y_j\})f_Y(y_1)f_Y(y_k)g(y_j|x_j)dy_1dy_kdy_jf_{\bX}(\bx)\,d\bx
			\\&+\int \sum_{j\neq 1}\ind(N(1)=j) \ind(y_k<\min\{y_1,y_j\})f_Y(y_1)g(y_k|x_k)f_Y(y_j)dy_1dy_kdy_j f_{\bX}(\bx)\,d\bx
			\\&+\int \sum_{j\neq 1}\ind(N(1)=j) \ind(y_k<\min\{y_1,y_j\})g(y_1|x_1)f_Y(y_k)f_Y(y_j)dy_1dy_kdy_j f_{\bX}(\bx)\,d\bx.
		\end{split}
	\end{equation}
	Now for the first term of $T_1$, we use Fubini's theorem to write
	\begin{align*}&\int \sum_{j\neq 1}\ind(N(1)=j) \ind(y_k<\min\{y_1,y_j\})f_Y(y_1)f_Y(y_k)g(y_j|x_j)dy_1dy_kdy_j f_{\bX}(\bx)\,d\bx
		\\=&\int\sum_{j\neq 1}\ind(N(1)=j) \ind(y_k<\min\{y_1,y_j\})(g(y_j|x_1)+g(y_j|x_j)-g(y_j|x_1))f_{\bX}(\bx)\,d\bx
		\\& \hspace{30pt}f_Y(y_k)f_Y(y_1)dy_1dy_kdy_j
		\\=:&\int\ind(y_k<\min\{y_1,y\})\int g(y|x_1)f_X(x_1)dx_1f_Y(y_1)f_Y(y_k)dydy_1dy_k+E_n
		\\=&\int \ind(y_k<\min\{y_1,y\})f_Y(y)f_Y(y_1)f_Y(y_k)dydy_1dy_k+E_n=\dfrac{1}{3}+E_n,
	\end{align*}
	where we use the fact that $\int g(y|x)f_X(x)dx=f_Y(y)$ from \eqref{eq:g_cond_den}. The remainder term is
	\begin{align*}\label{eq:rem_bound1}
		E_n=&\int \sum_{j\neq 1}\ind(N(1)=j) \int (g(y_j|x_j)-g(y_j|x_1)) f_Y(y_1)f_Y(y_k)dy_1dy_kdy_j f_{\bX}(\bx)\,d\bx
		\\= &\int \int\limits_{y_k}^\infty \int \sum_{j\neq 1}\ind(N(1)=j) \int\limits_{y_k}^\infty (g(y_j|x_j)-g(y_j|x_1)) f_Y(y_k)f_Y(y_1)dy_1dy_kdy_j f_{\bX}(\bx)\,d\bx
		\\=&\int  \int\limits_{y_k}^\infty \int \sum_{j\neq 1}\ind(N(1)=j)[G_{Y|X_1=x_1}(y_k)-G_{Y|X_{j}=x_j}(y_k)] 
		f_Y(y_k)f_Y(y_1)dy_1dy_k f_{\bX}(\bx)\,d\bx
		\\\le &\int  \int\limits_{y_k}^\infty \int \abs*{G_{Y|X_1}(y_k)-G_{Y|X_{N(1)}}(y_k)}f_{\bX}(\bx)\,d\bx
		f_Y(y_k)f_Y(y_1)dy_1dy_k
		\\\le &\int \int\{1\wedge (1+L_1^{(n)}(x_1,y_k)+L_2^{(n)}(x_{N(1)},y))|x_1-x_{N(1)}|^{\eta},1\} f_{\bX}(\bx)\,d\bx(1-F_Y(y_k))f_Y(y_k)dy_k
		\\\lesssim & \,\P(|X_1-X_{N(1)}|\geq 1)\\
		&~+\left\{\E\left(1+(L_1^{(n)}(X_1,Y_k))^{\theta}+(L_2^{(n)}(X_1,Y_k))^{\theta}\right)\right\}^{\frac{1}{\theta}}
		\left\{\E|X_1-X_{N(1)}|^{\frac{\theta\eta}{\theta-1}}\ind(|X_1-X_{N(1)}|\leq 1)\right\}^{\frac{\theta-1}{\theta}}
		\\\lesssim & \,\,n^{-\frac{\gamma}{\gamma+1}}(\log{n})^2+\left(\frac{(\log{n})^2}{n}\right)^{\left(\frac{\gamma(\theta-1)}{\theta(\gamma+1)}\wedge \frac{\eta \gamma}{\gamma+1}\right)},\numberthis
	\end{align*}
	where we use \eqref{eq:alt_lipschitz} in the second last line, and  \cref{lem:nndist} in the last line.	By analogous calculation, the second and third terms in $T_1$ can be written as
	$$\begin{aligned}
		&\int  \ind(y_k<\min\{y_1,y\})f_Y(y)f_Y(y_1)f_Y(y_k)dydy_1dy_k+O\left(\left(\frac{(\log n)^2}{n}\right)^{\left(\tfrac{\gamma(\theta-1)}{\theta(\gamma+1)}\wedge\tfrac{\eta\gamma}{\gamma+1}\right)}\right) \\
		=&\dfrac{1}{3}+O\left(n^{-\frac{\gamma}{\gamma+1}}(\log{n})^2+\left(\frac{(\log{n})^2}{n}\right)^{\left(\frac{\gamma(\theta-1)}{\theta(\gamma+1)}\wedge \frac{\eta \gamma}{\gamma+1}\right)}\right).
	\end{aligned}
	$$
	Finally, when $\xj=0$ $X_i$ and $Y_i$ are independent, and hence $g(y|x)=f_Y(y)$ for all $x$. It can be checked that in this case $E_n=0$ and similarly the other error terms are all zero. This finishes the proof of \cref{lem:T1}, i.e., part i) of Lemma A.1.
	\qed
	\begin{lemma}\label{lem:T2}
		$$\abs*{T_2-\left(\dfrac{2}{3}+H(f,g)\right)}\lesssim \left(n^{-\frac{\gamma}{\gamma+1}}(\log{n})^2+\left(\dfrac{(\log{n})^2}{n}\right)^{\left(\frac{\gamma(\theta-1)}{\theta(\gamma+1)}\wedge \frac{\eta \gamma}{\gamma+1}\right)}\right)\ind(\xj>0).$$
	\end{lemma}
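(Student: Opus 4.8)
The plan is to mirror the structure of the proof of \cref{lem:T1}. Recall from \eqref{eq:def_g} that, writing $f_{Y|X}^{(n)}(\cdot\mid x)=(1-p)f_Y(\cdot)+p\,g(\cdot\mid x)$ for a fixed $p\in(0,1)$ (say $p=\tfrac12$; the choice affects only constants), $T_2$ collects those terms in the expansion of $\EE\big(\EE(\ind(Y_{n,k}\le\min\{Y_{n,1},Y_{n,N(1)}\})\mid\bX^{(n)})\big)$ in which exactly two of the three conditional factors (for $Y_{n,1},Y_{n,N(1)},Y_{n,k}$) supply their $g$-part and one supplies the $f_Y$-part, i.e. the coefficient of $(1-p)p^2$. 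I will split $T_2$ into the three sub-cases indexed by which variable gets $f_Y$: (a) $Y_{n,k}$ gets $f_Y$ while $Y_{n,1},Y_{n,N(1)}$ get $g$; (b) $Y_{n,N(1)}$ gets $f_Y$ while $Y_{n,1},Y_{n,k}$ get $g$; (c) $Y_{n,1}$ gets $f_Y$ while $Y_{n,N(1)},Y_{n,k}$ get $g$. The goal is to show $\text{(a)}=H(f,g)+O(b_n\ind(\xj>0))$ and $\text{(b)},\text{(c)}=\tfrac13+O(b_n\ind(\xj>0)+n^{-1})$, which upon summing yields $T_2=\tfrac23+H(f,g)+r_n$ with $|r_n|$ of the stated order. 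A preliminary observation used throughout: although $G(s\mid x)=\tfrac1p F_{Y|X}^{(n)}(s\mid x)+(1-\tfrac1p)F_Y(s)$ may be negative or exceed $1$, it satisfies $G(s\mid x)\in[1-\tfrac1p,\tfrac1p]$, so $|1-G(s\mid x)|\le\tfrac1p$ and $\int|g(\cdot\mid x)|\le\tfrac2p-1$; this bounds all integrands encountered.

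For sub-case (a), first integrate out $y_k$ against $f_Y$, turning the indicator into $\min\{F_Y(y_1),F_Y(y_{N(1)})\}=\int f_Y(s)\,\ind(s\le y_1)\ind(s\le y_{N(1)})\,ds$. Then, exactly as for the term $E_n$ in \cref{lem:T1}, replace $g(\cdot\mid X_{n,N(1)})$ by $g(\cdot\mid X_{n,1})$: using $\int\ind(s\le y)g(y\mid x)\,dy=1-G(s\mid x)$, the remainder equals $\int f_Y(s)\,\EE\big[(1-G(s\mid X_{n,1}))(G(s\mid X_{n,1})-G(s\mid X_{n,N(1)}))\big]\,ds$, which is $\lesssim b_n\ind(\xj>0)$ by the Hölder-type bound \eqref{eq:alt_lipschitz}, the moment control \eqref{eq:usiden4}--\eqref{eq:usiden5}, the nearest-neighbour bound \cref{lem:nndist}/\eqref{eq:usiden6}, and Hölder's inequality. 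In the main term both $g$-factors now sit at $X_{n,1}$; since $\sum_j\ind(N(1)=j)=1$ and nothing else depends on the relabelled indices, integrating out $X_{n,1}$ against $f_X$ leaves $\EE\big[\int\!\int\min\{F_Y(y_1),F_Y(y_2)\}g(y_1\mid X)g(y_2\mid X)\,dy_1dy_2\big]=\EE\big[\int f_Y(s)(1-G(s\mid X))^2\,ds\big]=H(f,g)$.

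For sub-cases (b) and (c), integrating out the coordinate of the $f_Y$-variable collapses the indicator to $\ind(y_k<y_{\mathrm{other}})(1-F_Y(y_k))$; then $\int\ind(y_k<y)g(y\mid x)\,dy=1-G(y_k\mid x)$ resolves the $g$-factor sitting on $X_{n,1}$ (sub-case (b), no replacement needed) or on $X_{n,N(1)}$ (sub-case (c), where one first replaces it by $X_{n,1}$ at cost $O(b_n\ind(\xj>0))$ as above). Integrating the remaining $g$-factor on $X_{n,k}$ against $f_X$ gives $f_Y(y_k)$, and integrating $1-G(y_k\mid X_{n,1})$ against $f_X$ gives $1-F_Y(y_k)$ via $\int G(y\mid x)f_X(x)\,dx=F_Y(y)$; altogether one obtains $\int(1-F_Y(y_k))^2 f_Y(y_k)\,dy_k=\int_0^1(1-u)^2\,du=\tfrac13$. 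The only nuisance is that the event $\{N(1)=j\}$ perturbs the law of the "free" index $X_{n,k}$, but since $[X_{n,1},X_{n,N(1)}]$ has length $O(1/n)$ this contributes only to the $n^{-1}$ part of $r_n$ (indeed $\EE[\ind(N(1)=k)(\cdot)]\lesssim\PP(N(1)=k)\lesssim n^{-1}$ for the bounded integrands here); and when $\xj=0$ one has $g=f_Y$, $G=F_Y$, with no $X$-dependence, so every error term vanishes identically.

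The main obstacle is the same one that drives \cref{lem:T1}: controlling the replacement error $g(\cdot\mid X_{n,N(1)})\to g(\cdot\mid X_{n,1})$, which forces one to invoke \eqref{eq:alt_lipschitz}, the $\theta$-th moment control of the Hölder functions $L_i^{(n)}$ (via \cref{as:nn_dists} and \eqref{eq:Lipschitz2}, as in \eqref{eq:usiden4}--\eqref{eq:usiden5}) and \cref{lem:nndist} for $\EE|X_{n,1}-X_{n,N(1)}|^{\theta\eta/(\theta-1)}\ind(\cdot\le1)$ (as in \eqref{eq:usiden6}), and to verify first that $G$ is uniformly bounded so that all integrals converge. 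Everything else is bookkeeping of the three combinatorial cases and routine applications of Fubini's theorem.
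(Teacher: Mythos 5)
Your proposal is correct and follows essentially the same route as the paper: the identical three-way split of $T_2$ according to which of $Y_{n,1},Y_{n,N(1)},Y_{n,k}$ receives the $f_Y$-factor, the same replacement $g(\cdot\mid X_{n,N(1)})\to g(\cdot\mid X_{n,1})$ controlled via \eqref{eq:alt_lipschitz}, \eqref{eq:Lipschitz2}, and \cref{lem:nndist}, and the same identification of the main terms as $\tfrac13$, $\tfrac13$, and $H(f,g)$ (the paper's remainders $E_n'$, $E_n''$ are exactly your replacement errors). The only additions you make are explicit boundedness remarks on $G$ and the $\{N(1)=j\}$ perturbation, which the paper treats implicitly; these do not change the argument.
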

	\subsubsection{Proof of Lemma \ref{lem:T2}}\label{sec:pfT2}
	The second term deals with the case where one of the three
	is from $f_Y(y)$ and the other two are from $g(y|x)$:
	\begin{equation}
		\begin{split}
			T_2&=\int \sum_{j\neq 1}\ind(N(1)=j) \ind(y_k<\min\{y_1,y_j\})g(y_1|x_1)g(y_k|x_k)f_Y(y_j)dy_1dy_kdy_j f_{\bX}(\bx)\,d\bx
			\\&+\int  \sum_{j\neq 1}\ind(N(1)=j) \ind(y_k<\min\{y_1,y_j\})f_Y(y_1)g(y_k|x_k)g(y_j|x_j)dy_1dy_kdy_j f_{\bX}(\bx)\,d\bx
			\\&+\int \sum_{j\neq 1}\ind(N(1)=j) \ind(y_k<\min\{y_1,y_j\})g(y_1|x_1)f_Y(y_k)g(y_j|x_j)dy_1dy_kdy_j f_{\bX}(\bx)\,d\bx.
		\end{split}
	\end{equation}
	For the first term of $T_2$, by interchanging the integrals,
	\begin{align*}\label{eq:term_T2_1}
		&\int \sum_{j\neq 1}\ind(N(1)=j) \ind(y_k<\min\{y_1,y_j\})g(y_1|x_1)g(y_k|x_k)f_Y(y_j)dy_1dy_kdy_j f_{\bX}(\bx)\,d\bx
		\\=&\int  \sum_{j\neq 1}\ind(N(1)=j) \ind(y_k<\min\{y_1,y_j\})g(y_1|x_1)g(y_k|x_k)f_{\bX}(\bx)\,d\bx f_Y(y_j)dy_1dy_kdy_j
		\\=&\int \ind(y_k<\min\{y_1,y\})\left[\int g(y_1|x_1)f_X(x_1) dx_1 \int g(y_k|x_k)f_X(x_k) dx_k\right]f_Y(y_j)dy_1dy_kdy_j
		\\=&\int \ind(y_k<\min\{y_1,y\})f_Y(y_1) f_Y(y_k)f_Y(y_j)dy_1dy_kdy_j=\dfrac{1}{3}\numberthis
	\end{align*}
	where the last line follows from \eqref{eq:g_cond_den}.	The second term in $T_2$ is
	\begin{align*}\label{eq:term_T2_2}
		&\int \sum_{j\neq 1}\ind(N(1)=j) \ind(y_k<\min\{y_1,y_j\})f_Y(y_1)g(y_k|x_k)g(y_j|x_j)dy_1dy_kdy_j f_{\bX}(\bx)\,d\bx
		\\=&\int\sum_{j\neq 1}\ind(N(1)=j) \ind(y_k<\min\{y_1,y_j\})f_Y(y_1)g(y_k|x_k)\\
		&\hspace{100pt}(g(y_j|x_1)+g(y_j|x_j)-g(y_j|x_1)) f_{\bX}(\bx)\,d\bx dy_1dy_kdy_j
		\\=:&\int \ind(y_k<\min\{y_1,y\})\left[\int g(y_j|x_1)f_X(x_1) dx_1 \int g(y_k|x_k) f_X(x_k)dx_k\right]f_Y(y_1)dy_1dy_kdy_j+E'_n
		\\=&\int \ind(y_k<\min\{y_1,y\})f_Y(y_1) f_Y(y_k)f_Y(y_j)dy_1dy_kdy_j+E_n'=\dfrac{1}{3}+E_n'\numberthis
	\end{align*}
	where the last line follows from \eqref{eq:g_cond_den}.	Just as in equation \eqref{eq:rem_bound1}, the remainder term
	\begin{align*}\label{eq:rem_bound2}
		&E_n'
		\\=&\int\sum_{j\neq 1,k}\ind(N(1)=j) \ind(y_k<\min\{y_1,y_j\})(g(y_j|x_j)-g(y_j|x_1))g(y_k|x_k)d\mu(\bx) f_Y(y_1)dy_1dy_kdy_j
		\\=&\int \left[\int  \sum_{j\neq 1,k}\ind(N(1)=j) \ind(y_k<\min\{y_1,y_j\})(g(y_j|x_j)-g(y_j|x_1))f_{\bX}(\bx)\,d\bx\right]
		\\&\hspace{50pt}\times \left[\int g(y_k|x_k)f_X(x_k)dx_k\right] f_Y(y_1)dy_1dy_kdy_j
		\\=&E_n
		\\\lesssim &\,\,n^{-\frac{\gamma}{\gamma+1}}(\log{n})^2+\left(\frac{(\log{n})^2}{n}\right)^{\left(\frac{\gamma(\theta-1)}{\theta(\gamma+1)}\wedge \frac{\eta \gamma}{\gamma+1}\right)}.\numberthis
	\end{align*}
	We use the fact that $j\neq k$ in the second equality and~\cref{lem:nndist} in the last step. With a remainder term $E_n''$ defined similarly as $E_n$ and $E_n',$ we can write the third term as
	\begin{align*}
		&\int \sum_{j\neq 1}\ind(N(1)=j) \ind(y_k<\min\{y_1,y_j\})g(y_1|x_1)f_Y(y_k)g(y_j|x_j)dy_1dy_kdy_j f_{\bX}(\bx) \,d\bx
		\\=:&\int \ind(y_k<\min\{y_1,y\})\left[\int g(y_1|x_1) g(y|x_1)f_X(x_1) dx_1\right]f_Y(y_k)dy_1dy_kdy+E''_n
		\\=&\int\EE\left(\int_t^\infty g(y|X)dy\right)^2f_Y(t)dt+E_n''
		\\=&~H(f,g)+E_n''.	\numberthis\label{eq:term_T3_3}
	\end{align*}
	Indeed
	\begin{align*}\label{eq:rem_bound3}
		&E_n''
		\\=& \int \sum_{j\neq 1,k}\ind(N(1)=j) \ind(y_k<
		y_1\wedge y_j)(g(y_j|x_j)-g(y_j|x_1))g(y_1|x_1)f_{\bX}(\bx)d\bx f_Y(y_k)dy_1dy_kdy_j
		\\=&\int\sum_{j\neq 1,k}\ind(N(1)=j) \int\limits_{y_k}^{\infty}(g(y_j|x_j)-g(y_j|x_1))dy_j\int\limits_{y_k}^\infty g(y_1|x_1)dy_1 f_{\bX}(\bx)\,d\bx f_Y(y_k)dy_k
		\\=&\int\left[ \int \sum_{j\neq 1,k}\ind(N(1)=j) [G_{Y|X_1=x_1}(y_k)-G_{Y|X_j=x_j}(y_k)](1-G_{Y|X_1=x_1}(y_k)) f_{\bX}(\bx)\,d\bx\right] f_Y(y_k)dy_k
		\\=&\int \left[
		\int  [G_{Y|X_{1}}(y_k)-G_{Y|X_{N(1)}}(y_k)](1-G_{Y|X_1}(y_k)) f_{\bX}(\bx)\,d\bx\right] f_Y(y_k)dy_k
		\\\lesssim &~\,\P(|X_1-X_{N(1)}|\geq 1)\\
		&~+\left\{\E\left(1+(L_1^{(n)}(X_1,Y_k))^{\theta}+(L_2^{(n)}(X_1,Y_k))^{\theta}\right)\right\}^{\frac{1}{\theta}}
		\left\{\E|X_1-X_{N(1)}|^{\frac{\theta\eta}{\theta-1}}\ind(|X_1-X_{N(1)}|\leq 1)\right\}^{\frac{\theta-1}{\theta}}
		\\\lesssim & \,\,n^{-\frac{\gamma}{\gamma+1}}(\log{n})^2+\left(\frac{(\log{n})^2}{n}\right)^{\left(\frac{\gamma(\theta-1)}{\theta(\gamma+1)}\wedge \frac{\eta \gamma}{\gamma+1}\right)},\numberthis
	\end{align*}
	where the last step follows by mimicking the last line of \eqref{eq:rem_bound1}. Adding \eqref{eq:term_T2_1}-\eqref{eq:rem_bound3} proves \cref{lem:T2} when $\xj>0$.
	
	Once again $\xj=0$ means $g(y|x)=f_Y(y)$ for all $x$, which implies that $H(f,g)$, $E_n'$ and $E_n''$ are equal to zero.
	\qed
	
	\begin{lemma}\label{lem:T3}
		$$\abs*{T_3-H(f,g)}\lesssim  \,\,\left(n^{-\frac{\gamma}{\gamma+1}}(\log{n})^2+\left(\dfrac{(\log{n})^2}{n}\right)^{\left(\frac{\gamma(\theta-1)}{\theta(\gamma+1)}\wedge \frac{\eta \gamma}{\gamma+1}\right)}\right)\ind(\xj>0).$$
	\end{lemma}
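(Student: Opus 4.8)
The proof runs along exactly the same lines as those of \cref{lem:T1} and \cref{lem:T2}, so I would keep it brief. The quantity $T_3$ collects the contribution to $\EE\big(\EE(\ind(Y_{n,k}\le\min\{Y_{n,1},Y_{n,N(1)}\})\mid\bX^{(n)})\big)$ in which each of $Y_{n,1}$, $Y_{n,N(1)}$, $Y_{n,k}$ is drawn from the $g(\cdot|\cdot)$ component of the decomposition \eqref{eq:def_g}; writing $\bX=(X_1,\ldots,X_n)$,
$$T_3=\int\!\!\int \sum_{j\neq 1,k}\ind(N(1)=j)\,\ind(y_k<\min\{y_1,y_j\})\,g(y_1|x_1)\,g(y_k|x_k)\,g(y_j|x_j)\,dy_1\,dy_k\,dy_j\,f_{\bX}(\bx)\,d\bx.$$
The only structural difference from the third summand of $T_2$ treated in the proof of \cref{lem:T2} is that the factor $f_Y(y_k)$ there is now $g(y_k|x_k)$. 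Since $k\notin\{1,N(1)\}$ and the $X_i$'s are i.i.d.\ with density $f_X$, integrating out $x_k$ and invoking $\int g(y_k|x_k)f_X(x_k)\,dx_k=f_Y(y_k)$ from \eqref{eq:g_cond_den} returns exactly the integrand of that third summand, at the cost of an $O(n^{-1})$ correction coming from the constraint that $X_k$ avoid the spacing interval between $X_1$ and $X_{N(1)}$; this correction vanishes identically when $\xj=0$, since then $g(y|x)=f_Y(y)$ for every $x$.

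Next I would split $g(y_j|x_j)=g(y_j|x_1)+\big(g(y_j|x_j)-g(y_j|x_1)\big)$, producing a main term and a remainder $E_n'''$. For the main term, integrating out $x_k$ and $x_1$, performing the $y_k$-integral against $f_Y$, rewriting $\min\{F_Y(y_1),F_Y(y_j)\}=\int\ind(t\le y_1)\ind(t\le y_j)f_Y(t)\,dt$, swapping the order of integration and recognising $\int_t^\infty g(y|x_1)\,dy=1-G(t|x_1)$ yields
$$\int\EE\big(1-G(t|X_{n,1})\big)^2 f_Y^{(n)}(t)\,dt=H(f,g).$$
For the remainder, integrating out $x_k$ and carrying out the $y_1$, $y_j$ integrals reduces $E_n'''$ to the same expression as $E_n''$ in \eqref{eq:rem_bound3}, namely $\int\!\int\big[G(y_k|X_1)-G(y_k|X_{N(1)})\big]\big(1-G(y_k|X_1)\big)\,f_{\bX}(\bx)\,d\bx\,f_Y(y_k)\,dy_k$. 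Applying the H\"{o}lder--Lipschitz bound \eqref{eq:alt_lipschitz} (equivalently \cref{as:Lipschitz}), H\"{o}lder's inequality with exponent $\theta$, and the near-neighbour spacing estimate \cref{lem:nndist} exactly as in \eqref{eq:rem_bound3} then gives
$$|E_n'''|\lesssim n^{-\frac{\gamma}{\gamma+1}}(\log n)^2+\left(\frac{(\log n)^2}{n}\right)^{\left(\frac{\gamma(\theta-1)}{\theta(\gamma+1)}\wedge\frac{\eta\gamma}{\gamma+1}\right)}=b_n.$$

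Finally, when $\xj=0$ the variables $X_{n,1}$ and $Y_{n,1}$ are independent, so $g(y|x)=f_Y(y)$ and $G(y|x)=F_Y(y)$ for all $x$; then both the $x_k$-correction and $E_n'''$ vanish identically, which produces the factor $\ind(\xj>0)$ in the stated bound and completes the proof of \cref{lem:T3}. I do not anticipate any genuine difficulty here beyond the Fubini bookkeeping; the one point needing a moment's care — and which I would handle verbatim as in \cref{lem:T2} — is that the neighbour index $N(1)$ depends on the whole vector $\bX$, so that "integrating out $x_k$" must be carried out conditionally, with the resulting error controlled by the $O(1/n)$ size of the nearest-neighbour spacing.
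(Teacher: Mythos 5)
Your proposal is correct and follows essentially the same route as the paper: reduce $T_3$ to the third summand of $T_2$ by integrating out $x_k$ via $\int g(y_k|x_k)f_X(x_k)\,dx_k=f_Y(y_k)$, identify the main term as $H(f,g)$, and bound the remainder exactly as $E_n''$ in \eqref{eq:rem_bound3}, with everything vanishing when $\xj=0$. Your additional remark about the dependence of $N(1)$ on $x_k$ is a point the paper glosses over, but it does not change the argument.
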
	
	
	\subsubsection{Proof of Lemma \ref{lem:T3}}\label{sec:pfT3}
	Finally in the third term, the contributions for all of $Y_k$; $Y_1$; $Y_{N(1)}$ are from $g(y|x)$:
	\begin{align*}
		T_3&=\int \sum_{j\neq 1,k}\ind(N(1)=j) \ind(y_k<\min\{y_1,y_j\})g(y_1|x_1)g(y_k|x_k)g(y_j|x_j)dy_1dy_kdy_j f_{\bX}(\bx)\,d\bx
		\\&=\int \sum_{j\neq 1}\ind(N(1)=j) \ind(y_k<\min\{y_1,y_j\})g(y_1|x_1)f_Y(y_k)g(y_j|x_j)dy_1dy_kdy_j f_{\bX}(\bx)\,d\bx
		\\&=H(f,g)+E_n''
		\\&=H(f,g)+O\left(n^{-\frac{\gamma}{\gamma+1}}(\log{n})^2+\left(\dfrac{(\log{n})^2}{n}\right)^{\left(\frac{\gamma(\theta-1)}{\theta(\gamma+1)}\wedge \frac{\eta \gamma}{\gamma+1}\right)}\right).\nonumber
	\end{align*}
	The first line is by definition of $T_3.$ The second line follows by integrating over $x_k$ (notice that $j\neq k$). The third and fourth lines follow by comparing to \eqref{eq:term_T3_3} and \eqref{eq:rem_bound3} respectively. Once again $\xj=0$ means $g(y|x)=f_Y(y)$ for all $x$, which implies that $H(f,g)$, $E_n''$ are equal to zero.
	\qed
	
	\cref{lem:T1,lem:T2,lem:T3} together prove Lemma A.1.
	\qed
	
	\subsection{Auxiliary lemmas}
	\begin{lemma}\label{lem:nndist}
		Under Assumption (A2) from the main paper, given any $\epsilon>0$, $$\P\left(|X_1-X_{N(1)}|\geq\epsilon\right)\lesssim n^{-\frac{\gamma}{\gamma+1}}+\frac{(\log{n})^2 n^{-\frac{\gamma}{\gamma+1}}}{\epsilon}.$$
		Consequently, for any $p\leq 1$,
		uxil	$$\E|X_1-X_{N(1)}|^p\ind(|X_1-X_{N(1)}|\leq 1)\lesssim \left(\frac{(\log{n})^2}{n}\right)^{p}.$$
	\end{lemma}
	
	\begin{proof}[Proof of \cref{lem:nndist}] 
		The proof follows by retracing the steps of \cite[Lemma 14.1]{azadkia2019simple} and a straightforward application of Lyapunov's inequality.
	\end{proof}
	
	\medskip
	
	\begin{lemma}\label{lem:expocon}
		There exists a fixed positive constant $C>0$ such that the following holds for any $n\geq 1$ and $t\geq 0$:
		$$\P\left(|Z_n-\E Z_n|\geq t\right)\leq 2\exp(-Cnt^2).$$
	\end{lemma}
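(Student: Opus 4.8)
The plan is to invoke McDiarmid's bounded differences inequality, exploiting that $Z_n=\sqrt n(H_1-H_1')$ is a deterministic function of the i.i.d.\ vector $\bX^{(n)}=(X_{n,1},\dots,X_{n,n})$ alone. Writing
$$\psi(x,x'):=\int \min\{F_Y^{(n)}(y_1),F_Y^{(n)}(y_2)\}\,f_{Y|X}^{(n)}(y_1\,|x)\,f_{Y|X}^{(n)}(y_2\,|x')\,dy_1\,dy_2,$$
which satisfies $0\le\psi\le1$ since $\min\{F_Y^{(n)},F_Y^{(n)}\}\in[0,1]$ and $f_{Y|X}^{(n)}(\cdot\,|x)$ is a density, we have
$$Z_n=\frac{6n^{3/2}}{n^2-1}\sum_{i=1}^n\Big(\psi(X_{n,i},X_{n,N(i)})-\psi(X_{n,i},X_{n,i})\Big),$$
where both the summands and the nearest-neighbor map $N(\cdot)$ depend only on $\bX^{(n)}$, the functions $F_Y^{(n)}$ and $f_{Y|X}^{(n)}$ being fixed.

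The key step is then the bounded-differences estimate: if $\bX^{(n)}$ and $\tilde\bX^{(n)}$ differ only in coordinate $k$, then $|Z_n(\bX^{(n)})-Z_n(\tilde\bX^{(n)})|\le Cn^{-1/2}$ for an absolute constant $C$. This hinges on the \emph{locality} of the nearest-neighbor map: perturbing a single $X_{n,k}$ changes $N(i)$ for only $O(1)$ indices $i$ — namely $i=k$ itself, the left-neighbor of $X_{n,k}$ before and after the perturbation, and, when the convention $N(i)=1$ is triggered, the old/new right-most index. Since $|\psi|\le 1$, each affected summand shifts by at most $2$, so the whole sum shifts by $O(1)$; multiplying by the prefactor $\frac{6n^{3/2}}{n^2-1}=\Theta(n^{-1/2})$ gives the bound. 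Carrying out this combinatorial bookkeeping — that a one-coordinate change disturbs the nearest-neighbor graph, and hence the defining sum of $Z_n$, in only boundedly many places, uniformly in $n$ — is the one place requiring care; it is exactly the mechanism used in \cite[Lemma 9.11]{sc_corr}.

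Having established that all $n$ bounded-differences constants are of order $n^{-1/2}$, an application of McDiarmid's inequality (see \cite{Mcdiarmid1989}) yields the sub-Gaussian concentration bound asserted in \cref{lem:expocon}. Everything else — the boundedness of $\psi$, the order of the prefactor, and the final invocation of McDiarmid — is routine.
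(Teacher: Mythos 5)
Your route is exactly the one the paper intends: the paper omits the proof, saying only that it "follows from McDiarmid's inequality as used in \cite[Lemma 9.11]{sc_corr}", and your argument supplies precisely the missing details — $Z_n$ is a function of $\bX^{(n)}$ alone, $0\le\psi\le1$, and a single-coordinate perturbation disturbs the nearest-neighbour map (hence the defining sum) in only $O(1)$ places. That bookkeeping is sound.

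There is, however, a quantitative gap at the last step. With the bounded-differences constants you establish, $c_k\le Cn^{-1/2}$ for each $k$, McDiarmid gives
$$\P\left(|Z_n-\E Z_n|\geq t\right)\le 2\exp\!\left(-\frac{2t^2}{\sum_{k=1}^n c_k^2}\right)=2\exp(-ct^2),$$
since $\sum_k c_k^2\asymp 1$; this is weaker by a factor of $n$ in the exponent than the bound $2\exp(-Cnt^2)$ asserted in \cref{lem:expocon}, and you cannot recover that factor from bounded differences of order $n^{-1/2}$ (the summand differences $\psi(X_i,X_{N(i)})-\psi(X_i,X_i)$ admit no deterministic bound better than $O(1)$). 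You assert that McDiarmid "yields the bound asserted" without checking the exponent, so as written the proof does not establish the statement. The mitigating observation — which you should have made explicit — is that the weaker bound $2\exp(-Ct^2)$ is all that is actually used downstream: it gives \eqref{eq:vofe5} by taking $t=K\sqrt{\log n}$ with $K$ large, and it gives $\E(Z_n-\E Z_n)^4\lesssim 1$ for \eqref{eq:vofe8}. So the $n$ in the exponent of the lemma's statement appears to be a typo rather than something your (or any bounded-differences) argument could deliver; flagging this discrepancy is the one substantive thing missing from your write-up.
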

	
	\begin{proof}[Proof of Lemma~\ref{lem:expocon}]
		We omit the proof of this lemma since it follows simply from McDiarmid's inequality (see~\cite{Mcdiarmid1989}) as used in~\cite[Lemma 9.11]{sc_corr}.    
	\end{proof}
	
\bibliographystyle{plain} 
\bibliography{SigNoise}

\appendix	
	
\end{document}